\newtheorem*{thm*}{Theorem}
\newcommand{\ff}{{\mathcal F}}
\newcommand{\aaa}{{\mathcal A}}
\newtheorem*{cla*}{Claim}
\newcommand{\bb}{{\mathcal B}}
\newtheorem{thm}{Theorem}
\newtheorem{gypo}{Conjecture}
\newtheorem{opr}{Definition}
\newtheorem{lem}[thm]{Lemma}
\newtheorem{cla}[thm]{Claim}
\newtheorem{cor}[thm]{Corollary}
\date{}
\newtheorem{prop}[thm]{Proposition}
\newtheorem{obs}[thm]{Observation}
\DeclareMathOperator{\E}{\mathrm E}
\title{Two problems on matchings in set families -- in the footsteps of Erd\H os and Kleitman}
\author{Peter Frankl}\address{R\'enyi Institute, Budapest, Hungary; Email: {\tt peter.frankl@gmail.com}}
\author{Andrey Kupavskii}
\address{University of Birmingham and Moscow Institute of Physics and Technology; Email: {\tt kupavskii@ya.ru}.} \thanks{The research of the second author was supported by the grant RNF~16-11-10014.}
\date{}
\begin{document}
\maketitle
\begin{abstract}
The families $\ff_1,\ldots, \ff_s\subset 2^{[n]}$ are called \textit{$q$-dependent} if there are no pairwise disjoint $F_1\in \ff_1,\ldots, F_s\in\ff_s$ satisfying $|F_1\cup\ldots\cup F_s|\le q.$ We determine $\max |\ff_1|+\ldots +|\ff_s| $ for all values $n\ge q,s\ge 2$. The result provides a far-reaching generalization of an important classical result of Kleitman.

The well-known Erd\H os Matching Conjecture suggests the largest size of a family $\ff\subset {[n]\choose k}$ with no $s$ pairwise disjoint sets. After more than 50 years its full solution is still not in sight. In the present paper we provide a Hilton-Milner-type stability theorem for the Erd\H os Matching Conjecture in a relatively wide range, in particular, for $n\ge (2+o(1))sk$ with $o(1)$ depending on $s$ only. This is a considerable improvement of a classical result due to Bollob\'as, Daykin and Erd\H os.

We apply our results to advance in the following anti-Ramsey-type problem, proposed by \"Ozkahya and Young. Let  $ar(n,k,s)$ be the minimum number $x$ of colors such that in any coloring of the $k$-element subsets of $[n]$ with $x$ (non-empty) colors there is a \textit{rainbow matching} of size $s$, that is, $s$ sets of different colors that are pairwise disjoint. We prove a stability result for the problem, which allows to determine $ar(n,k,s)$ for all $k\ge 3$ and $n\ge sk+(s-1)(k-1).$ Some other consequences of our results are presented as well.

\end{abstract}
\section{Introduction}
Let $[n] := \{1,2,\ldots, n\}$ be the standard $n$-element set and $2^{[n]}$ its power set. A subset $\mathcal F\subset 2^{[n]}$ is called a \textit{family}. For $0\le k\le n$ let ${[n]\choose k}$ denote the family of all $k$-subsets of $[n]$.

For a family $\ff$, let $\nu(\ff)$ denote the maximum number of pairwise disjoint members of $\ff$. Note that $\nu(\ff)\le n$ holds, unless $\emptyset \in \ff$. The fundamental parameter $\nu(\ff)$ is called the \textit{independence number} or \textit{matching number} of $\ff$.

Let us introduce an analogous notion for several families.

\begin{opr} Suppose that $\ff_1,\ldots, \ff_s\subset 2^{[n]}$, where $2\le s\le n.$ We say that $\ff_1,\ldots, \ff_s$ are \textit{cross-dependent} if there is no choice of $F_1\in \ff_1,\ldots, F_s\in \ff_s$ such that $F_1,\ldots, F_s$ are pairwise disjoint.
\end{opr}

Note that $\nu(\ff)<s$ is equivalent to saying that $\ff_1, \ldots, \ff_s$, where $\ff_i:=\ff$ for all $i\in[s]$, are cross-dependent.\\

\noindent\textbf{Example.} Let $n = sm+s-\ell$ for some $\ell\in [s]$. Define
\begin{equation*}\tilde{\ff}_{i} := \begin{cases}\{F\subset[n]:|F|\ge m\},\ \ \ \ \ \ \ \ \ \ 1\le i<\ell, \\
 \{F\subset[n]:|F|\ge m+1\},\ \ \ \ \ \ell\le i\le s.\end{cases}
\end{equation*}
Then $\tilde{\ff}_{1},\ldots,\tilde{\ff}_{s}$ are easily seen to be cross-dependent.\\

One of the main results of the present paper is as follows.
\begin{thm}\label{thm2} Choose integers $s,m,\ell$ satisfying $s\ge 2$, $m\ge 0$ and $\ell\in [s]$. Set $n = sm+s-\ell$ and suppose that $\mathcal F_1,\ldots, \ff_s\subset 2^{[n]}$ are cross-dependent.  Then
\begin{equation}\label{eqcross}\sum_{i=1}^s|\mathcal F_i|\le (l-1){n\choose m}+s\sum_{t\ge m+1}{n\choose t}\ \ \ \Biggl[= \sum_{i=1}^s|\tilde{\ff}_i|\Biggr].\end{equation}
\end{thm}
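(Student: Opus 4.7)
My plan is to combine the standard shifting and upward-closure reductions with a double-counting argument over ordered partitions of $[n]$ whose block sizes match those of the extremal example.

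\medskip

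\noindent\emph{Reductions.} First, I would replace each $\ff_i$ by its upward closure $\{G \subseteq [n] : G \supseteq F \text{ for some } F \in \ff_i\}$. This operation only increases $\sum_i |\ff_i|$ and preserves cross-dependence, since pairwise disjoint $G_i$ in the closures contain pairwise disjoint witnesses $F_i \in \ff_i$. Next, I would apply the simultaneous $(a,b)$-shift for every $1 \le a < b \le n$ to all families; the standard lifting argument shows that cross-dependence, the individual sizes $|\ff_i|$, and upward-closedness are all preserved. Henceforth each $\ff_i$ may be assumed shifted and upward-closed.

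\medskip

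\noindent\emph{Partition counting.} Let $\mathcal{P}$ be the set of ordered $s$-partitions $(P_1,\ldots,P_s)$ of $[n]$ with exactly $\ell$ parts of size $m$ and $s-\ell$ parts of size $m+1$; these exist because $\ell m + (s-\ell)(m+1) = sm + s - \ell = n$. For every $(P_1,\ldots,P_s)\in \mathcal{P}$, cross-dependence forbids $P_i \in \ff_i$ for all $i$, so
\[
\sum_{i=1}^{s} \mathbf{1}[P_i \in \ff_i] \le s - 1.
\]
Summing over $\mathcal{P}$, and noting that for each $i$ and each $F \subseteq [n]$ with $|F| \in \{m, m+1\}$ the number of elements of $\mathcal{P}$ with $P_i = F$ depends only on $|F|$ (by relabeling the other $s - 1$ slots), one derives a linear inequality of the form
\[
c_m \sum_{i=1}^{s} |\ff_i^{(m)}| + c_{m+1} \sum_{i=1}^{s} |\ff_i^{(m+1)}| \le (s-1)|\mathcal{P}|,
\]
where $\ff_i^{(t)} := \ff_i \cap \binom{[n]}{t}$ and $c_m, c_{m+1}$ are explicit multinomial constants. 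For $t \ge m + 2$ the trivial bound $\sum_i |\ff_i^{(t)}| \le s \binom{n}{t}$ is already tight on the extremal example, and for $t < m$ upward-closedness combined with shifting forces so many size-$m$ supersets of any size-$t$ set into $\ff_i$ that having such small sets is strictly suboptimal; so I would reduce to the case where no $\ff_i$ has any set of size $< m$.

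\medskip

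\noindent\emph{Main obstacle.} The delicate point is recovering the asymmetric coefficient $\ell - 1$ (rather than a symmetric one) in front of $\binom{n}{m}$. The inequality from the counting step treats all families symmetrically, so one must additionally observe that at most $\ell - 1$ of the $\ff_i$ can contain all of $\binom{[n]}{m}$: otherwise, some partition in $\mathcal{P}$ would assign the $\ell$ size-$m$ parts to $\ell$ such ``full'' families and the $s - \ell$ size-$(m+1)$ parts to the remaining families (which contain $\binom{[n]}{m+1}$ in full by the trivial upper-level bound being attained), producing exactly the disjoint transversal ruled out by cross-dependence. Reconciling this Hall-type structural constraint with the linear inequality from partition counting, across all possible trade-offs between families and between the $m$- and $(m+1)$-levels, is the main technical hurdle in turning the plan into a complete proof.
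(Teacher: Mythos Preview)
Your partition-counting step yields an inequality that is strictly too weak to prove the theorem for $1<\ell<s$, and the proposed patch does not close the gap.

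Concretely, with $a:=\sum_i|\ff_i^{(m)}|/\binom{n}{m}$ and $b:=\sum_i|\ff_i^{(m+1)}|/\binom{n}{m+1}$, your double count (using $c_m=\ell|\mathcal P|/(s\binom{n}{m})$ and $c_{m+1}=(s-\ell)|\mathcal P|/(s\binom{n}{m+1})$) gives
\[
\ell a+(s-\ell)b\le s(s-1).
\]
On the extremal configuration $(a,b)=(\ell-1,s)$ the left side equals $s(s-1)-(\ell-1)(s-\ell)$, which is \emph{strictly} below $s(s-1)$ whenever $1<\ell<s$. Consequently the linear constraint you obtain leaves room for points such as $(a,b)=(\ell-1+\varepsilon,\,s)$, which beat the target $a\binom{n}{m}+b\binom{n}{m+1}\le(\ell-1)\binom{n}{m}+s\binom{n}{m+1}$. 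So a single averaging over size-profile partitions cannot recover the asymmetric coefficient $\ell-1$; you acknowledge this, but the Hall-type fix you propose (``at most $\ell-1$ of the $\ff_i$ can contain all of $\binom{[n]}{m}$'') is circular --- it assumes the remaining families already contain all of $\binom{[n]}{m+1}$ --- and in any case says nothing about families whose $m$-level is large but not full. A second, smaller gap: you cannot ``reduce to no sets of size $<m$'' by declaring them suboptimal; removing them decreases $|\ff_i|$, so they must be accounted for in the bound, not discarded.

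The paper's proof takes a more delicate route. It separates $\ell=1$ from $\ell\ge 2$ and, instead of averaging over one block decomposition, chooses random disjoint sets $H_1,\ldots,H_s$ of size $m$ (respectively $m-1$) together with random \emph{full chains} inside each $H_i$, then proves a \emph{pointwise} weighted inequality (their Lemmas~\ref{lem11v1} and~\ref{lem11}) for every realisation. The chain terms, with weights $\binom{n}{j}$, absorb the contributions of levels $<m$. The correct coefficient $\ell-1$ emerges from casework on $p:=\#\{i:\beta_i(\emptyset)=1\}$: for $\ell\ge 2$ the range $p<\ell/2$ is handled by applying a separate mini-result (their Proposition~\ref{prop2}) to the induced families on levels $1$ and $2$ of the residual $(2s-\ell)$-set, and Observation~\ref{obs11} lets one trade level-$1$ deficits for level-$2$ deficits. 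This case split is exactly what your symmetric average cannot see, and is where the asymmetry in the bound comes from.
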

Inequality (\ref{eqcross}) extends the following important classical result of Kleitman.
\begin{thm*}[Kleitman, \cite{Kl}] Let $s\ge 2$ be an integer and $\ff\subset 2^{[n]}$ a family satisfying $\nu(\ff)<s.$ Then for $n=s(m+1)-\ell$ we have
\begin{align}\label{eq002}|\ff|\le \frac{\ell-1}s{n\choose m}+\sum_{t\ge m+1}{n\choose t}.
\end{align}
\end{thm*}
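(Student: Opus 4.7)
The plan is to extend Kleitman's chain/partition argument from the single-family setting to the cross-dependent case, combining a shifting reduction with a probabilistic partitioning argument. First, I apply the simultaneous $(i,j)$-shift to all families $\ff_1,\ldots,\ff_s$. Because cross-dependence is preserved under simultaneous shifting---any hypothetical disjoint transversal in the shifted families can be untwisted to one in the originals by a standard case analysis on which of the $s$ sets contains $i$ or $j$---and since sizes $|\ff_i|$ are preserved, I may assume each $\ff_i$ is shifted.

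Next, I set up a probabilistic counting step. Fix a uniformly random cyclic permutation $\pi$ of $[n]$, a uniformly random placement on the cycle of $s$ consecutive arcs consisting of $\ell$ arcs of size $m$ and $s-\ell$ arcs of size $m+1$ (their sizes summing to $n=sm+s-\ell$), and a bijective assignment of these arcs to $[s]$. Cross-dependence forces that in every configuration at most $s-1$ of the arcs lie in their assigned families. Taking expectations, this $(s-1)$-bound translates into a linear inequality relating $\sum_i|\ff_i\cap {[n]\choose m}|$ and $\sum_i|\ff_i\cap {[n]\choose m+1}|$, from which the ``middle-layer'' portion of \eqref{eqcross} is extracted. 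For sizes $t>m+1$ the trivial bound $\sum_i|\ff_i\cap{[n]\choose t}|\le s{n\choose t}$ already matches \eqref{eqcross}; for sizes $t<m$, I invoke shiftedness in a chain/replacement step, pushing each small set up to the middle layer to absorb its contribution without overcounting.

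The main obstacle is the averaging calculation itself: the extremal example is asymmetric, with only $\ell-1$ of the families containing the full $m$-layer, so the distributions on partitions and assignments have to be chosen so that the coefficient $\ell-1$ appears in front of ${n\choose m}$ rather than a symmetrized $(\ell-1)/s\cdot s$ factor. This likely requires a non-uniform weighting over cyclic arrangements of the block sizes, or else a refined partition scheme that treats the $m$-arcs and $(m+1)$-arcs on different footings. A secondary technical point is the verification that the simultaneous $(i,j)$-shift preserves cross-dependence for $s\ge 3$ families, where the untwisting is genuinely more delicate than in the $s=2$ cross-intersecting situation and needs a careful inductive case analysis over the families containing $i$ or $j$.
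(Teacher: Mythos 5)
Your high-level plan matches the paper's: prove the cross-dependent generalization (Theorem~\ref{thm2}), then deduce Kleitman by taking $\ff_1=\cdots=\ff_s=\ff$ and dividing by $s$. The reduction to shifted families is also standard and is used by the paper. Beyond that, however, both the mechanism and the completeness diverge, and there are genuine gaps.

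The probabilistic object is different. You average over a random cyclic permutation with $\ell$ arcs of size $m$ and $s-\ell$ arcs of size $m+1$, bounding the expected number of ``good'' arcs by $s-1$. The paper instead picks $s$ random pairwise disjoint sets $H_1,\ldots,H_s$ of size $m$ (for $\ell=1$) or $m-1$ (for $\ell\ge 2$), plus a random full chain inside each $H_i$, and considers the sets $H_i\cup S$ for $S$ running over small subsets of the leftover $[2s-\ell]$ (or $[s-1]$). Crucially, the paper's key estimate (Lemma~\ref{lem11} / Lemma~\ref{lem11v1}) is a \emph{pointwise} inequality valid for every realization of the random blocks and chains, proved by a case analysis on $p=\sum_i\beta_i(\emptyset)$ combined with an auxiliary inequality (Proposition~\ref{prop2}) for cross-dependent families of sets of size at most two. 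Expectations are taken only at the very end. Your proposal instead relies purely on averaging.

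Two concrete gaps. First, as you yourself flag, the uniformly weighted cyclic average does not produce the asymmetric coefficient $\ell-1$ in front of ${n\choose m}$. A direct computation on the extremal configuration $\tilde\ff_i$ confirms the slack: with uniform assignment, the expected number of good arcs is $(\ell-1)+\frac{(s-\ell+1)(s-\ell)}{s}$, which is strictly below $s-1$ unless $\ell\in\{1,s\}$, so the averaging bound is not tight precisely in the intermediate cases where Kleitman's bound has no matching lower bound. You say this ``likely requires a non-uniform weighting,'' but you do not specify one, and without it the argument does not close. Second, your arcs have size $m$ or $m+1$ only, so the averaging step says nothing about layers $t<m$; the phrase ``pushing each small set up to the middle layer to absorb its contribution without overcounting'' is not an argument. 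In the paper this is exactly what the random full chains $H_i^{(0)}\subsetneq\cdots\subsetneq H_i^{(m)}$ are for: each chain meets each low layer in at most one set, and this is what makes the contributions of low layers add without double-counting. Your cyclic-arc setup has no analogue of this and hence cannot see the layers $t<m$ at all. Until these two points are resolved, the proof attempt is a plausible-looking strategy, not a proof.
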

In the case $n=s(m+1)-1$ the families $\tilde\ff_i$ from the example above are all the same, and thereby the bound (\ref{eq002}) is best possible. It is also best possible for $\ell=s$, as the following example due to Kleitman shows:
$$\mathcal K:=\bigl\{K\subset [sm]:|K|\ge m+1\bigr\}\cup{[sm-1]\choose m}.$$
(Note that ${sm-1\choose m} = \frac {s-1}s{sm\choose m}$.) In the case $s=2$ the bound (\ref{eq002}) reduces to $|\ff|\le 2^{n-1}$. This easy statement was proved already by Erd\H os, Ko and Rado \cite{EKR}.

Although (\ref{eq002}) is a beautiful result, it has no matching lower bound for  $n\not\equiv 0$ or $-1 (\mathrm{mod}\ s).$ For $s=3$ the exact answer for the only remaining residue class was given by Quinn \cite{Q}. Recently, we made some further progress \cite{FK7, FK8} and in particular completely resolved the case $n\equiv -2(\mathrm{mod}\ s)$.

Let us mention that, except the case $s=3$ and $\ell=2$, an extension of the methods used by Kleitman in \cite{Kl} is sufficient to prove Theorem \ref{thm2}. However, for that single case it does not seem to work. This forced us to find a very different proof. It is given in Section \ref{sec8}. We proved related results refining the method developed in the present paper, see \cite{FK13,FK8,FK10}.\\

Next we discuss a generalization of the notion of cross-dependence.
\begin{opr}\label{def1} Let $2\le s\le n$ and $q\in[n]$ be fixed integers. The families $\ff_1,\ldots, \ff_s\subset 2^{[n]}$ are called \textit{$q$-dependent} if there are no pairwise disjoint $F_1\in\ff_1,\ldots, F_s\in \ff_s$ satisfying $|F_1\cup\ldots\cup F_s|\le q$.
\end{opr}
 For $q=n$ the notion of $q$-dependence reduces to that of cross-dependence. Quite surprisingly, one can determine the exact maximum of $|\ff_1|+\ldots+|\ff_s|$ for $q$-dependent families $\ff_1,\ldots, \ff_s\subset 2^{[n]}$ and \textit{all} values of $n,q,s$.

Let $s\ge 2,\ m\ge 0$ and $\ell\in [s]$. If $n\ge q := sm+s-\ell$, then one can define
\begin{equation}\label{eq085}\tilde{\ff}_{i}^{n,q} := \begin{cases}\{F\subset[n]:|F|\ge m\},\ \ \ \ \ \ \ \ \ \ 1\le i<\ell, \\
 \{F\subset[n]:|F|\ge m+1\},\ \ \ \ \ \ell\le i\le s.\end{cases}
\end{equation}
In Section~\ref{sec8} we prove the following generalization of Theorem \ref{thm2}. It follows by induction from Theorem~\ref{thm2}, which serves as the base case.
\begin{thm}\label{thm7} Choose integers $s,m,\ell$ satisfying $s\ge 2,$ $m\ge 0$ and $\ell\in [s]$. Put $q = sm+s-\ell$ and suppose that $n\ge q$. If $\mathcal F_1,\ldots, \ff_s\subset 2^{[n]}$ are $q$-dependent, then
\begin{equation}\label{eq019} \sum_{i=1}^s|\ff_i|\le  (\ell-1){n\choose m}+s\sum_{t=m+1}^n{n\choose t}\ \ \ \Biggl[=\sum_{i=1}^s|\tilde{\ff}_i^{n,q}| \Biggr].\end{equation}
\end{thm}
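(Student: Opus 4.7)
The plan is to induct on $n$, with Theorem~\ref{thm2} furnishing the base case $n = q$. For the inductive step $n > q$, I would first reduce to the case where each $\ff_i$ is stable under the shifts $S_{y,n}$ for every $y \in [n-1]$; simultaneous shifting preserves both $q$-dependence and the cardinalities $|\ff_i|$. The consequence I would use below is the \emph{shift property}: whenever $G \cup \{n\} \in \ff_i$ and $y \in [n-1] \setminus G$, then $G \cup \{y\} \in \ff_i$ as well.

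Split each family as $\aaa_i = \{F \in \ff_i : n \notin F\}$ and $\bb_i = \{F \setminus \{n\} : F \in \ff_i,\ n \in F\}$, both regarded as subfamilies of $2^{[n-1]}$, so that $|\ff_i| = |\aaa_i| + |\bb_i|$. The $\aaa_i$'s are obviously $q$-dependent in $2^{[n-1]}$, and since $n-1 \ge q$ the inductive hypothesis gives $\sum_i |\aaa_i| \le (\ell-1)\binom{n-1}{m} + s\sum_{t=m+1}^{n-1}\binom{n-1}{t}$.

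The key step is that $\bb_1,\ldots,\bb_s$ are $(q-s)$-dependent in $2^{[n-1]}$. Suppose for contradiction that $G_i \in \bb_i$ are pairwise disjoint with $|\bigcup_i G_i| \le q-s$. Since $|[n-1] \setminus \bigcup_i G_i| \ge n - 1 - (q-s) \ge s-1$, I may pick distinct $y_2,\ldots,y_s \in [n-1] \setminus \bigcup_i G_i$. By the shift property, $F_1 := G_1 \cup \{n\} \in \ff_1$ and $F_i := G_i \cup \{y_i\} \in \ff_i$ for $i \ge 2$; these are pairwise disjoint with $|\bigcup_i F_i| = |\bigcup_i G_i| + s \le q$, contradicting the $q$-dependence of the $\ff_i$'s. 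For $m \ge 1$ the inductive hypothesis applied with $q' = q-s = s(m-1)+s-\ell$ in $[n-1]$ yields $\sum_i |\bb_i| \le (\ell-1)\binom{n-1}{m-1} + s\sum_{t=m}^{n-1}\binom{n-1}{t}$; for $m = 0$ the same bound is trivial (use $\binom{n-1}{-1} = 0$ and $|\bb_i| \le 2^{n-1}$).

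Adding the two estimates and invoking Pascal's identity $\binom{n-1}{m-1} + \binom{n-1}{m} = \binom{n}{m}$ together with $\sum_{t=m+1}^{n-1}\binom{n-1}{t} + \sum_{t=m}^{n-1}\binom{n-1}{t} = \sum_{t=m+1}^n \binom{n}{t}$ recovers precisely the right-hand side of \eqref{eq019}, closing the induction. The step I expect to be most delicate is checking that simultaneous shifting of all $s$ families preserves $q$-dependence: one has to handle a configuration in which a hypothetical violating $s$-tuple in the shifted families contains the shifted image $y$ in one slot and the original element $n$ in another, and a double swap is then needed to produce a violating tuple in the original families. This is routine but is the point that requires the most care.
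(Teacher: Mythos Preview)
Your argument is essentially identical to the paper's: split off the last coordinate, apply the induction hypothesis to $\ff_i(\bar n)$ with the same $q$ and to $\ff_i(n)$ with $q-s$, and recombine via Pascal. The one point to tighten is the induction scheme: you describe it as induction on $n$ with base case $n=q$, but the step for the $\bb_i$'s invokes the statement with $m-1$ (equivalently $q-s$), which is a different instance of the theorem and is not covered by an induction on $n$ for fixed $q$; the paper handles this by a double induction on $m$ and then on $n$ (with $m=0$ as the outer base case), and you should phrase it the same way.
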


\subsection{A Hilton-Milner-type result for the Erd\H os Matching Conjecture}

The Kleitman Theorem was motivated by a conjecture of Erd\H os (see \cite{Kl}). Erd\H os \cite{E} himself studied the uniform case, i.e., the families $\ff \subset {[n]\choose k}$. Let us make a formal definition.

\begin{opr}\label{def2} For positive integers $n,k,s$ satisfying $s\ge 2$, $n\ge ks$, define
$$e_k(n,s) := \max\Bigl\{|\mathcal F|: \ff\subset {[n]\choose k}, \nu(\mathcal F)<s\Bigr\}.$$
\end{opr}

Note that for $s = 2$ the quantity $e_k(n,s)$ was determined by Erd\H os, Ko and Rado.
\begin{thm*}[Erd\H os-Ko-Rado \cite{EKR}]
\begin{equation}\label{eq005} e_k(n,2) = {n-1\choose k-1} \ \ \ \ \ \text{for } \ \ \ n\ge 2k>0.
\end{equation}
\end{thm*}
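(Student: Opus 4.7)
The lower bound is realized by the \emph{star} family $\mathcal{S}_1 := \bigl\{F \in \binom{[n]}{k} : 1 \in F\bigr\}$: every two members share the element $1$, so $\nu(\mathcal{S}_1) = 1$, and $|\mathcal{S}_1| = \binom{n-1}{k-1}$. The task is therefore to establish the matching upper bound $|\mathcal{F}| \le \binom{n-1}{k-1}$ for an arbitrary intersecting family $\mathcal{F} \subset \binom{[n]}{k}$ with $n \ge 2k$. My plan is to present Katona's cyclic permutation argument. I would represent cyclic orderings of $[n]$ by the $(n-1)!$ equivalence classes of permutations modulo rotation, and call a $k$-set $F$ an \emph{arc} of a cyclic ordering $\pi$ if the elements of $F$ occupy $k$ consecutive positions on the cycle.

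The heart of the proof is the following claim: for each cyclic ordering $\pi$ of $[n]$, the family $\mathcal{F}$ contains at most $k$ arcs of $\pi$. To see this, suppose some arc $A$ of $\pi$ lies in $\mathcal{F}$ (otherwise there is nothing to show); after rotation I may assume $A$ occupies positions $1, 2, \ldots, k$. Any other arc of $\pi$ intersecting $A$ must start at one of the positions $2, \ldots, k$ (extending past $A$ on the right) or at one of the positions $n - k + 2, \ldots, n$ (wrapping around and overlapping $A$ on the left). I would pair the arc starting at position $i \in \{2, \ldots, k\}$ with the arc starting at position $n - k + i$; a direct verification shows that these two arcs are disjoint precisely because $n \ge 2k$. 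Since $\mathcal{F}$ is intersecting, each of the $k - 1$ pairs contributes at most one member to $\mathcal{F}$, and together with $A$ itself this accounts for at most $k$ arcs of $\pi$ in $\mathcal{F}$.

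The theorem then follows by double counting the pairs $(F, \pi)$ with $F \in \mathcal{F}$ an arc of $\pi$. A fixed $F \in \binom{[n]}{k}$ is an arc of exactly $k!(n-k)!$ cyclic orderings, since one chooses its placement on the cycle and then orders $F$ and its complement internally. Counting by $F$ gives $|\mathcal{F}| \cdot k!(n-k)!$ pairs; counting by $\pi$ and applying the claim bounds the total by $(n-1)! \cdot k$. Rearranging yields
\[
|\mathcal{F}| \;\le\; \frac{(n-1)!\, k}{k!\,(n-k)!} \;=\; \binom{n-1}{k-1}.
\]
The main obstacle is the claim about arcs; the hypothesis $n \ge 2k$ enters exactly there, and the bound $k$ is sharp when $n = 2k$ (each arc has a unique antipodal disjoint arc). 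For $n < 2k$ the inequality is anyway vacuous, since any two $k$-subsets of $[n]$ automatically intersect.
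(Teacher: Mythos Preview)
Your argument is correct: this is Katona's cyclic permutation proof, and every step checks out. The pairing of the arc starting at position $i\in\{2,\ldots,k\}$ with the one starting at $n-k+i$ is exactly where $n\ge 2k$ is used, and the double count $|\ff|\cdot k!(n-k)!\le (n-1)!\cdot k$ is clean.

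As for comparison with the paper: there is nothing to compare against. The paper does not prove the Erd\H os--Ko--Rado theorem; it quotes it as a classical result from \cite{EKR} to motivate the Erd\H os Matching Conjecture and the quantities $e_k(n,s)$. So your proposal supplies a self-contained proof where the paper simply cites one. Katona's method (which you use) is in fact one of the standard modern proofs and is shorter than the original shifting argument of Erd\H os, Ko and Rado.
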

\noindent The case $s\ge 3$ is much harder.
There are several natural examples of families $\aaa\subset{[n]\choose k}$ satisfying $\nu(\aaa) = s$ for $n\ge (s+1)k$. Following \cite{F3}, for each $i\in [k]$ let us define the families $\aaa_i^{(k)}(n,s):$
\begin{equation}\label{eq0011} \aaa_i^{(k)}(n,s) := \Bigl\{A\in {[n]\choose k}:\bigl|A\cap [(s+1)i-1]\bigr|\ge i\Bigr\}.\end{equation}
Note that $|\aaa_1^{(k)}(n,s)|={n\choose k}-{n-s\choose k}$.

\begin{gypo}[Erd\H os Matching Conjecture \cite{E}]\label{conj2} For $n\ge (s+1)k$ we have
\begin{equation}\label{eq008} e_k(n,s+1) = \max\bigl\{|\aaa_1^{(k)}(n,s)|,|\aaa_k^{(k)}(n,s)|\bigr\}.
\end{equation}
\end{gypo}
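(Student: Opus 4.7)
Conjecture~\ref{conj2} is a longstanding open problem, so my proposal is a plan of attack rather than a complete route to a proof. The overall strategy I would pursue combines shifting, the non-uniform Theorems~\ref{thm2} and~\ref{thm7}, and a junta/stability reduction in the residual range.

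First I would apply the standard left-shift operators $S_{ij}$, which preserve both $|\ff|$ and $\nu(\ff)$, reducing to the case of shifted families. For a shifted $k$-uniform $\ff$ with $\nu(\ff)\le s$, let $\widehat\ff\subset 2^{[n]}$ denote the upset generated by $\ff$. Then $s+1$ copies of $\widehat\ff$ are $q$-dependent with $q=(s+1)k$, and Theorem~\ref{thm7} yields an upper bound on $\sum_{t\ge k}\bigl|\widehat\ff\cap\binom{[n]}{t}\bigr|$; isolating the uniform piece at level $k$ should match $|\aaa_1^{(k)}(n,s)|$ in the regime where the level-$k$ slice dominates, i.e.\ for $n$ large compared to $sk$. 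At the opposite extreme, for $n$ close to $(s+1)k$, one would argue directly that a family with $\nu(\ff)=s$ must essentially live on $(s+1)k-1$ vertices, yielding $|\aaa_k^{(k)}(n,s)|=\binom{(s+1)k-1}{k}$. This would, in principle, pin down the two regimes where each of the candidate extremal families wins.

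The main obstacle, and the reason the conjecture has resisted attack for over fifty years, is the transition between these two regimes. For intermediate $n$, neither the slicing argument nor the vertex-packing argument is tight, and one needs a genuine stability theorem ensuring that a near-extremal $\ff$ is structurally close to one of $\aaa_1^{(k)}(n,s)$ or $\aaa_k^{(k)}(n,s)$. The Hilton--Milner-type stability result proved later in this paper is precisely the right kind of tool, and I would attempt a bootstrapping argument: use a known bound (Bollob\'as--Daykin--Erd\H os, Frankl's random-walk estimate, or the present paper's improvement) as a base case in some range $n\ge c_s k$, then iteratively shrink $c_s$ by combining the stability theorem with a junta approximation (approximating $\ff$ by a family depending on $O_s(1)$ coordinates) to reduce to a finite case analysis. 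The fundamental difficulty is making the junta error small enough to distinguish the two extremal candidates uniformly across all $n\ge(s+1)k$; this last step seems to demand a new idea beyond shifting and the non-uniform theorems, and is where I would expect the plan to stall.
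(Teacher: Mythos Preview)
This statement is Conjecture~\ref{conj2}, which the paper does not prove; it is an open problem, and you correctly frame your proposal as a plan of attack rather than a proof. However, one concrete step in your plan is flawed and would not deliver what you claim even in the easy regime. You assert that applying Theorem~\ref{thm7} to $s+1$ copies of the upset $\widehat{\ff}$ with $q=(s+1)k$ and then ``isolating the uniform piece at level $k$'' should recover $|\aaa_1^{(k)}(n,s)|$ for large $n$. It does not. With $q=(s+1)k$ one is forced into $m=k$, $\ell=s+1$, and Theorem~\ref{thm7} gives
\[
(s+1)|\widehat{\ff}|\;\le\; s\binom{n}{k}+(s+1)\sum_{t>k}\binom{n}{t}.
\]
Subtracting the upper-shadow layers yields at best $|\ff|\le \frac{s}{s+1}\binom{n}{k}$, which is the Kleitman-type bound and is larger by a factor of order $n/k$ than $|\aaa_1^{(k)}(n,s)|=\binom{n}{k}-\binom{n-s}{k}\sim \frac{sk}{n}\binom{n}{k}$. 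The non-uniform extremal configuration (a fat layer) has nothing to do with the uniform extremal configuration (sets hitting $[s]$), so there is no layer-isolation trick that closes this gap.

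The known sharp bound \eqref{eq009} for $n\ge(2s+1)k-s$ is obtained in \cite{F4} by a completely different mechanism: one works directly with the shifted $k$-uniform family, partitions according to intersection with $[s+1]$, and exploits that the families $\ff(\{i\},s+1)$ are \emph{nested} and cross-dependent, applying the weighted inequality of Lemma~\ref{lem61}. That is also the engine behind Theorem~\ref{thmhil} here. Your instinct that stability plus bootstrapping is the right shape for attacking the transition regime is reasonable and in line with current approaches, but the base case for large $n$ must come from Lemma~\ref{lem61}, not from Theorems~\ref{thm2} or~\ref{thm7}.
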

\noindent The conjecture (\ref{eq008}) is known to be true for $k\le 3$ (cf. \cite{EG}, \cite{LM} and \cite{F11}).
Improving earlier results of \cite{E}, \cite{BDE}, \cite{HLS} and \cite{FLM}, the first author \cite{F4} proved

\begin{equation}\label{eq009} e_k(n,s+1) = {n\choose k}-{n-s\choose k}  \ \ \ \ \ \ \text{ for } \ \ \ \ \ n\ge (2s+1)k-s.\end{equation}
In the forthcoming paper \cite{FK14}, we improve this bound for large $s$. We note that the conjecture is related to several questions in combinatorics, probability and computer science, cf. \cite{aletal, Zh16}.

In the case $s = 1$, corresponding to the Erd\H os-Ko-Rado theorem, one has a very useful stability theorem due to Hilton and Milner \cite{HM}. Below we discuss it and its natural generalization  to the case $s>1$.

Let us define the following families.
\begin{multline*}\mathcal H^{(k)}(n,s) := \Bigl\{H\in{[n]\choose k}: H\cap [s]\ne \emptyset\Bigr\} \cup\bigl\{[s+1,s+k]\bigr\}-\\ \Bigl\{H\in{[n]\choose k}: H\cap [s] = \{s\},H\cap[s+1,s+k]=\emptyset\Bigr\}.\end{multline*}
Note that $\nu(\mathcal H^{(k)}(n,s)) = s$ for $n\ge sk$ and
\begin{equation}\label{eq010} |\mathcal H^{(k)}(n,s)| = {n\choose k}-{n-s\choose k}+1-{n-s-k\choose k-1}.
\end{equation}
The {\it covering number} $\tau(\mathcal H)$ of a family $\mathcal H$ is the minimum of $|T|$ over all $T\subset [n]$ satisfying $T\cap H\ne \emptyset $ for all $H\in\mathcal H$. Recall the definition (\ref{eq0011}). The equality $\tau(\aaa_1^{(k)}(n,s)) = s$ for $n\ge k+s$ is obvious. At the same time, if $n\ge k+s$, then $\tau(\mathcal H^{(k)}(n,s)) = s+1$ and $\tau(\aaa_i^{(k)}(n,s))>s$ for each $i\ge 2$.

Let us make the following conjecture.

\begin{gypo}\label{conj3} Suppose that $\ff\subset {[n]\choose k}$ satisfies $\nu(\ff) = s, \tau(\ff)>s$. Then
\begin{equation}\label{eq011}|\ff|\le \max\Bigl\{\bigl\{|\aaa_i^{(k)}(n,s)|: i = 2,\ldots, k\bigr\},\ |\mathcal H^{(k)}(n,s)|\Bigr\}.\end{equation}
\end{gypo}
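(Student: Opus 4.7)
My plan combines shifting with the $q$-dependent bound of Theorem~\ref{thm7}, in the spirit of Hilton--Milner. First I would replace $\ff$ by a fully shifted family $\ff'$ via iterated left-shifts; these preserve $|\ff'|=|\ff|$ and $\nu(\ff')=s$, but can decrease $\tau$. Splitting the argument into the case when $\tau(\ff')>s$ still holds after shifting, and the case when a specific shift reduces $\tau$ from $s+1$ to $s$, isolates the essential difficulty.

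In the first case, shiftedness together with $\tau(\ff')>s$ forces $[s+1,s+k]\in\ff'$: take any $F\in\ff'$ with $F\cap[s]=\emptyset$ (which exists because no $s$-element set covers $\ff'$) and shift its entries to the leftmost available positions. I would then partition $\ff'$ by the trace $S=F\cap[s+k]$. Any collection of pairwise disjoint traces $S_1,\ldots,S_{s+1}\subset[s+k]$ uses at most $s+k$ elements, so the corresponding tail families on $[s+k+1,n]$ must be $q$-dependent in the sense of Definition~\ref{def1} with $q=n-s-k-\sum|S_i|$, or else one would build an $(s+1)$-matching in $\ff'$. Theorem~\ref{thm7} then bounds the total tail mass for each such cluster of traces; summing over traces, each weighted by the corresponding ${n-s-k\choose k-|S|}$, should reproduce the right-hand side of~(\ref{eq011}), with equality only when $\ff'$ matches one of the families $\aaa_i^{(k)}(n,s)$ or $\mathcal{H}^{(k)}(n,s)$.

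The main obstacle is the second case, where some shift $\sigma_{ij}$ drops $\tau$ from $s+1$ to $s$: one must undo that shift and argue, via a direct gain/loss count, that the original $\ff$ still satisfies~(\ref{eq011}). For $s=1$ this is exactly the heart of Hilton--Milner and is clean because only one extremal family competes. For $s\ge 2$ the right-hand side of (\ref{eq011}) is a maximum over the $k-1$ families $\aaa_2^{(k)}(n,s),\ldots,\aaa_k^{(k)}(n,s)$ and $\mathcal{H}^{(k)}(n,s)$, each dominating in a different sub-range of $n$, so the reverse-shifting bookkeeping must split into several sub-cases depending on which extremal family the unshifted $\ff$ is closest to. I expect this branching to be the genuine obstruction; the abstract confirms as much by announcing the corresponding stability statement only in the range $n\ge(2+o(1))sk$, where one of the extremal families dominates uniformly and the case analysis collapses.
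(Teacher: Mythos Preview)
The statement you are attempting to prove is Conjecture~\ref{conj3}, which the paper explicitly leaves open in general. The paper does \emph{not} contain a proof of this conjecture; it proves only the partial result Theorem~\ref{thmhil}, valid for $k\ge 3$ and $n\ge (s+\max\{25,2s+2\})k$ (equivalently $n\ge (2+o(1))sk$ for large $s$), where the maximum on the right-hand side of~(\ref{eq011}) is attained by $|\mathcal H^{(k)}(n,s)|$. So there is no ``paper's own proof'' to compare against for the full statement, and any purported proof of the conjecture as stated should be treated with suspicion. Your closing paragraph correctly senses this.

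Even restricted to the range of Theorem~\ref{thmhil}, your approach diverges substantially from the paper's and contains a structural gap. The paper does not use Theorem~\ref{thm7} at all in the proof of Theorem~\ref{thmhil}; the engine is Lemma~\ref{lem61} (Frankl's bound for nested cross-dependent $(k-1)$-uniform families), applied to the families $\mathcal G(\{i\},s+1)$ on $[s+2,n]$, together with a careful case analysis on $\nu(\mathcal G(\emptyset,s))$ and $\tau(\mathcal G(\emptyset,s))$ carried out in Lemma~\ref{lem62}, Proposition~\ref{prop19}, Lemma~\ref{lem8} and Proposition~\ref{prop10}. Your plan instead invokes Theorem~\ref{thm7}, a bound for $q$-dependent families in $2^{[N]}$, applied to the tail families on $[s+k+1,n]$. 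The difficulty is that Theorem~\ref{thm7} is sharp only for the non-uniform threshold families $\tilde\ff_i^{n,q}$ of~(\ref{eq085}); when the tails are forced to be uniform (of size $k-|S|$ for each trace $S$), the inequality~(\ref{eq019}) is typically far from tight, and summing the slack over all traces will not recover the exact constants in~(\ref{eq011}). In other words, the $q$-dependent machinery is calibrated to the wrong extremal configurations for this uniform problem. The paper's route via Lemma~\ref{lem61} stays inside the $(k-1)$-uniform world throughout, which is precisely what allows the constants to match $|\mathcal H^{(k)}(n,s)|$.
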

One can verify that  $|\mathcal A_i^{(k)}(n,s)|\le \max\bigl\{|\aaa_1^{(k)}(n,s)|,|\aaa_k^{(k)}(n,s)|\bigr\}$ for any $i\in k$. Modulo this verification, Conjecture~\ref{conj3} implies Conjecture~\ref{conj2}: indeed, we have $|\mathcal H^{(k)}(n,s)|\le |\aaa_1^{(k)}(n,s)|,$ and so the maximum on the right hand side of (\ref{eq011}) is at most the maximum on the right hand side of (\ref{eq008}). The Hilton-Milner theorem shows that (\ref{eq011}) is true for $s=1$.

\begin{thm*}[Hilton-Milner \cite{HM}] Suppose that $n> 2k$ and let $\ff\subset {[n]\choose k}$ be a family satisfying $\nu(\ff) = 1$ and $\tau(\ff)\ge 2.$ Then
\begin{equation*} |\ff|\le |\mathcal H^{(k)}(n,1)|.\end{equation*}
\end{thm*}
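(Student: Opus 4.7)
The plan is to apply the standard shifting technique. Shift operations $s_{ij}$ preserve $|\ff|$ and the intersecting property, but they can collapse $\tau(\ff)$ from $\ge 2$ to $1$. I would handle this by performing shifts one at a time; if some shift would cause $\tau$ to drop, analyze the family just before this shift directly — its structure is then that of a ``near-star'' that can be bounded by a short calculation.

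Once $\ff$ is shifted with $\tau(\ff)\ge 2$, pick any $F\in\ff$ with $1\notin F$, which exists since $1$ is not a cover. By shifts $s_{ij}$ with $2\le i<j$, $F$ can be pushed down to $[2,k+1]$, so shiftedness forces $[2,k+1]\in\ff$. Split $\ff=\ff_1\sqcup\ff_{\bar 1}$ according to whether $1$ belongs to the set. Every $G\in\ff_1$ must intersect $[2,k+1]$, giving
\[
|\ff_1|\le\binom{n-1}{k-1}-\binom{n-k-1}{k-1}.
\]

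To bound $|\ff_{\bar 1}|$ against the slack in the above estimate, consider
\[
\mathcal G=\Bigl\{\{1\}\cup S:S\in\binom{[2,n]}{k-1},\ S\cap F=\emptyset\text{ for some }F\in\ff_{\bar 1}\Bigr\}.
\]
Each $G\in\mathcal G$ is disjoint from some $F\in\ff$, hence $\mathcal G\cap\ff_1=\emptyset$ and $|\ff_1|\le\binom{n-1}{k-1}-|\mathcal G|$. To conclude $|\ff|\le|\mathcal H^{(k)}(n,1)|$, it suffices to prove
\[
|\mathcal G|\ge\binom{n-k-1}{k-1}+|\ff_{\bar 1}|-1.
\]
For this I would exhibit, for each $F\in\ff_{\bar 1}\setminus\{[2,k+1]\}$, a set $G_F\in\mathcal G$ of the form $\{1,x_F\}\cup T_F$ with $x_F\in[2,k+1]\setminus F$ and $T_F\in\binom{[k+2,n]\setminus F}{k-2}$. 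Since $x_F\in[2,k+1]$, such $G_F$ lies outside the $\binom{n-k-1}{k-1}$ sets forbidden by $[2,k+1]$, and the hypothesis $n>2k$ makes the construction feasible.

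The main obstacle is the \emph{injectivity} of $F\mapsto G_F$: a priori distinct $F$'s could map to the same $G_F$. I would use shiftedness to make a canonical choice — for instance $x_F=\min([2,k+1]\setminus F)$ and $T_F$ chosen lex-minimally in $[k+2,n]\setminus F$ — and then verify that $G_F$ determines $F$ uniquely by reconstructing $F$ from the forbidden coordinates. Establishing this injectivity, together with the ``near-star'' case handled by the shifting step, completes the proof.
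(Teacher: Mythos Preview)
First, note that the paper does \emph{not} prove the Hilton--Milner theorem: it is quoted as a classical result from \cite{HM} and used only as a black box to dispose of the case $s=1$ at the beginning of the proof of Theorem~\ref{thmhil}. There is therefore no proof in the paper to compare your attempt against.

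Regarding your proposal on its own merits: the overall shifting architecture is standard and sound, but the decisive step --- the injection $F\mapsto G_F$ from $\ff_{\bar 1}\setminus\{[2,k+1]\}$ into $\mathcal G$ --- has two genuine gaps.

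\emph{Feasibility.} You require $T_F\in\binom{[k+2,n]\setminus F}{k-2}$, hence $|[k+2,n]\setminus F|\ge k-2$. For $F\in\ff_{\bar 1}$ in a shifted intersecting family one can show $|F\cap[2,k+1]|\ge 2$ (each set $\{1\}\cup([2,k+1]\setminus\{j\})$ lies in $\ff$ by shiftedness and must meet $F$), which yields $|[k+2,n]\setminus F|\ge n-2k+1$; but this is $<k-2$ whenever $n<3k-3$. Concretely, for $k=5$, $n=11$ the shifted closure of $\{2,3,7,8,9\}$ is intersecting with $\tau\ge 2$, yet for $F=\{2,3,7,8,9\}$ one has $[7,11]\setminus F=\{10,11\}$, of size $2<3$, so no $T_F$ exists.

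\emph{Injectivity.} Even when the construction is feasible, your canonical choice does not make $F\mapsto G_F$ injective. Take $k=3$, $n=7$, and let $\ff$ be the shifted closure of $\{2,3,7\}$ (every member contains at least two of $1,2,3$, so $\ff$ is intersecting, and clearly $\tau(\ff)\ge 2$). Both $F=\{2,3,6\}$ and $F'=\{2,3,7\}$ lie in $\ff_{\bar 1}$; for each, $x_F=\min([2,4]\setminus F)=4$ and the lex-minimal $T_F\in\binom{[5,7]\setminus F}{1}$ is $\{5\}$, so $G_F=G_{F'}=\{1,4,5\}$. The underlying reason is that lex-minimality of $T_F$ only reveals $F\cap[k+2,\max T_F]$, not the part of $F$ beyond $\max T_F$. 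This is precisely why the standard proofs of Hilton--Milner proceed via a different mechanism (shadow/exchange bounds, or the Frankl--F\"uredi method) rather than a raw injection of this shape.
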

 We mention that for $n>2sk$ the maximum on the RHS of (\ref{eq011}) is attained on $|\mathcal H^{(k)}(n,s)|.$ For $n>2k^3s$, (\ref{eq011}) was verified by Bollob\'as, Daykin and Erd\H os \cite{BDE}.

Our second main result is the proof of (\ref{eq011}) in a much wider range.

\begin{thm}\label{thmhil} Suppose that $k\ge 3$ and either $n\ge (s+\max\{25,2s+2\})k$ or  $n\ge \bigl(2+o(1)\bigr)sk$, where $o(1)$ is with respect to $s\to \infty$. Then for any $\mathcal G\subset {[n]\choose k}$ with $\nu(\mathcal G)=s<\tau(\mathcal G)$ we have $|\mathcal G|\le |\mathcal H^{(k)}(n,s)|$.
\end{thm}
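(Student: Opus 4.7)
The plan is to reduce $\mathcal{G}$ to a shifted family with $\tau > s$ preserved, split $\mathcal{G}$ carefully by intersection with $[s]$, and bound each piece using a combination of the Erd\H os matching bound (\ref{eq009}), the cross-dependent framework of Theorem~\ref{thm7}, and the shifted structure of $\mathcal{G}$.

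First, I reduce to the case that $\mathcal{G}$ is shifted with $\tau(\mathcal{G})>s$. Ordinary shifting preserves $\nu$ but can decrease $\tau$; to remedy, one applies only those shifts $S_{ij}$ that preserve $\tau>s$, and when a would-be shift drops $\tau$ to $s$, the witnessing $s$-cover of $S_{ij}(\mathcal{G})$ forces a very specific almost-cover structure on $\mathcal{G}$ (an $s$-set avoided only by members containing $j$), from which the desired bound follows directly by counting. After this reduction, shiftedness and $\tau>s$ force $[s+1,s+k]\in\mathcal{G}$.

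Next, split $\mathcal{G}=\mathcal{G}_1\sqcup\mathcal{G}_2\sqcup\mathcal{G}_3$ with $\mathcal{G}_1=\{G:G\cap[s-1]\neq\emptyset\}$, $\mathcal{G}_2=\{G:G\cap[s]=\{s\}\}$, and $\mathcal{G}_3=\{G:G\cap[s]=\emptyset\}$. The trivial bound $|\mathcal{G}_1|\le\binom{n}{k}-\binom{n-s+1}{k}$ reduces the task to showing
\begin{equation*}
|\mathcal{G}_2|+|\mathcal{G}_3|\le\binom{n-s}{k-1}-\binom{n-s-k}{k-1}+1,
\end{equation*}
which together with the $\mathcal{G}_1$-estimate yields $|\mathcal{G}|\le|\mathcal{H}^{(k)}(n,s)|$ via Pascal's identity. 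Transfer to the ground set $[s+1,n]$ by setting $\mathcal{F}_2=\{A\in\binom{[s+1,n]}{k-1}:\{s\}\cup A\in\mathcal{G}_2\}$ and $\mathcal{F}_3=\mathcal{G}_3$. The key claim is that $\mathcal{F}_2$ and $\mathcal{F}_3\setminus\{[s+1,s+k]\}$ are essentially cross-intersecting: a cross-disjoint pair $A\in\mathcal{F}_2$, $B\in\mathcal{F}_3\setminus\{[s+1,s+k]\}$ could be combined with $[s+1,s+k]$ and $s-2$ star-on-$[s-1]$ sets drawn from $\mathcal{G}$ inside the unused portion of $[s+1,n]$ into a matching of size $s+1$, contradicting $\nu(\mathcal{G})=s$. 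Combining this cross-intersection property with the shiftedness of $\mathcal{F}_3$ (so $[s+1,s+k]$ plays the role of a canonical witness) and the Frankl bound (\ref{eq009}) applied to $\mathcal{F}_3$ (which forces $\mathcal{F}_3$ to be essentially $\{[s+1,s+k]\}$ in the generic case $\nu(\mathcal{F}_3)\le s-1$), one obtains the joint bound.

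The main obstacle I anticipate is the quantitative step of ensuring that the $s-2$ auxiliary star-on-$[s-1]$ sets are available in every configuration: shifting $[s+1,s+k]$ alone only produces star sets with very narrow support, so one must additionally exploit the abundance of shifted star sets implied by the hypothesis that $|\mathcal{G}|$ is close to $|\mathcal{H}^{(k)}(n,s)|$ (and a case analysis for small $\mathcal{G}$ which is already dealt with by $|\mathcal{G}|\le|\aaa_1^{(k)}(n,s)|-c$). A careful application of Theorem~\ref{thm7} to suitable link families on $[s+k+1,n]$ handles the degenerate cases, and the two regimes of $n$ in the statement provide precisely the slack needed for these quantitative estimates to close.
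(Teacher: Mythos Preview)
Your proposal has a genuine gap at the heart of the argument: the ``cross-intersection'' claim for $\mathcal{F}_2$ and $\mathcal{F}_3\setminus\{[s+1,s+k]\}$ is not true as stated. If $A\in\mathcal{F}_2$ and $B\in\mathcal{F}_3$ are disjoint, the three sets $\{s\}\cup A$, $B$, $[s+1,s+k]$ are pairwise disjoint only if \emph{both} $A$ and $B$ avoid $[s+1,s+k]$, which nothing in your setup guarantees. So even granting the auxiliary star sets, you do not get an $(s+1)$-matching. What you could hope for is cross-intersection restricted to sets in $\binom{[s+k+1,n]}{k-1}$ and $\binom{[s+k+1,n]}{k}$, but then you lose all control over the (potentially large) parts of $\mathcal{F}_2,\mathcal{F}_3$ meeting $[s+1,s+k]$, and the target inequality $|\mathcal{G}_2|+|\mathcal{G}_3|\le\binom{n-s}{k-1}-\binom{n-s-k}{k-1}+1$ no longer follows.

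The obstacle you yourself flag---the availability of the $s-2$ star-on-$[s-1]$ sets---is also real and not a detail. Shiftedness of $\mathcal{G}$ together with $[s+1,s+k]\in\mathcal{G}$ only gives sets of the form $\{i\}\cup[s+2,s+k]$, all of which meet $[s+1,s+k]$; there is no mechanism in your outline that produces star sets living in $[s+k+1,n]$ disjoint from $A$, $B$, and each other. The paper's proof does not attempt this. Instead it works entirely with the link families $\mathcal{G}(\{i\},s+1)$ for $i\in[s+1]$, which are cross-dependent and (after shifting) nested, and applies the weighted inequality of Lemma~\ref{lem61} to them. The real work is a case split on $x:=\nu(\mathcal{G}(\emptyset,s))$: for $x=1$ one tracks how many of the sets $D_j=[s+1,s+k+1]\setminus\{s+j\}$ lie in $\mathcal{G}(\emptyset,s)$ (Proposition~\ref{prop10}), while for $x\ge 2$ one needs a delicate averaging over choices of matchings inside $\mathcal{G}(\emptyset,s)$ (Proposition~\ref{prop19} and especially Lemma~\ref{lem8}) to extract a large enough deficit. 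The constants $25$ and $2s+2$ in the hypothesis come precisely from this analysis; your outline never touches the mechanism that produces them.

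Finally, your reduction to the shifted case is too quick: when a shift drops $\tau$ to $s$, the bound does \emph{not} follow ``directly by counting''. The paper instead stops shifting at $\tau=s+1$, arranges that every set meets $[s+1]$, shifts separately on $[s+2,n]$ and on $[s+1]$, and then observes that the resulting family still has the nested non-empty link structure needed to rerun the shifted argument.
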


We prove Theorem \ref{thmhil} in Section \ref{sechil}. We note that stability results for the Erd\H os Matching Conjecture \cite{EKL}, as well as some important progress for a much more general class of Turan-type problems \cite{KLchv} were obtained recently. However, these results deal with the case $n>f(s)\cdot k$, where $f(s)$ is a fast growing function depending on $s$. Some other Hilton-Milner-related stability results were recently proven in \cite{KM} for $n>n_0(s,k)$.\\

Let us recall the method of \textit{left shifting} (or simply \textit{shifting}). For a given  pair of indices $1\le i<j\le n$ and a set $A \in 2^{[n]}$ we define the {\it $(i,j)$-shift} $S_{i,j}(A)$ of $A$ in the following way.
$$S_{i,j}(A) := \begin{cases}A \ \ \ \ \ \ \ \ \ \ \ \ \ \ \ \ \ \ \ \ \text{if } i\in A\ \ \text{or }\ \ \ j\notin A;\\(A-\{j\})\cup \{i\}\ \ \text{if } i\notin A\ \ \text{and }\ j\in A.
\end{cases}$$
Next, we define the {\it $(i,j)$-shift} $S_{i,j}(\mathcal F)$ of a family $\mathcal F\subset 2^{[n]}$:

$$S_{i,j}(\mathcal F) := \{S_{i,j}(A): A\in \mathcal F\}\cup \{A: A,S_{i,j}(A)\in \mathcal F\}.$$
We call a family $\ff$ \textit{shifted}, if $S_{i,j}(\ff) = \ff$ for all $1\le i<j\le n$.

%\section{Auxiliaries}\label{secaux}

\section{Proof of Theorem \ref{thmhil} }\label{sechil}

%The \textit{degree} of an element $i\in[n]$ is the number of sets from $\mathcal G$ containing $i$.
% W.l.o.g. assume that the number of elements containing the element $i$ decreases as $i$ increases
%Assume that $\mathcal G$ is shifted and, moreover, not all its sets intersect the segment $[1,l-1]$. We would call such a family \textit{nontrivial}.
%The first step is to bound the cardinality  from above:
%\begin{prop}\label{prop8} For any such $\mathcal G$ and $n>(2s+1)k$ we have \begin{equation}\label{eq111}|\cup_{i=1}^{s}\mathcal G(i)|\le {n\choose k}-{n-s\choose k} - {n-s-k\choose k-1}.\end{equation}
%\end{prop}
%\begin{proof}Since $\mathcal G$ shifted and $\tau(\mathcal G) = s+1$, we may assume that the set $[s+1,s+k]$ is in $\mathcal G$. There are at most $N = \sum_{i=1}^{s}\Bigl({n-i\choose k-1}-{n-k-i\choose k-1}\Bigr)$ sets in $\mathcal G$ that intersect both $[1,s]$ and $[s+1,s+k]$. Throwing them away, we obtain a family $\mathcal G'\subset{[1,s]\cup[s+k+1,n]\choose k}$, $\mathcal G' = \{G: G\in \mathcal G, G\cap[1,s]\ne \emptyset, G\cap [s+1,s+k] = \emptyset\}.$ Since $\nu(\mathcal G)\le s,$ we get that $\nu(\mathcal G')\le s-1$. Since $n-k>(2(s-1)+1)k-(s-1)$, we may apply Frankl's result \cite{F4} to $\mathcal G'$ and obtain that $|\mathcal G'|\le {n-k\choose k}-{n-k-s+1\choose k} = \sum_{i=1}^{s-1}{n-k-i\choose k-1}$.
%Putting everything together, we get that $$|\cup_{i=1}^{s}\mathcal G(i)| = N+|\mathcal G'|\le \sum_{i=1}^{s}{n-i\choose k-1}-{n-s-k\choose k-1},$$
%which is exactly the inequality (\ref{eq111}). \end{proof}

For $s = 1$ the theorem follows from the Hilton-Milner theorem, therefore we may assume that $s\ge 2$. Choose an integer $u$ such that \begin{equation}\label{eq184}  (u+s)(k-1)+s+1\le n < (u+s+1)(k-1)+s+1.\end{equation} Consider a family $\mathcal G$ satisfying the requirements of the theorem.

\subsection{The case of shifted $\mathcal G$}
First we prove Theorem \ref{thmhil} in the assumption that $\mathcal G$ is shifted.
Following \cite{F4}, we say that the families $\ff_1,\ldots, \ff_{s+1}$ are \textit{nested} if $\mathcal F_1\supset \mathcal F_2\supset\ldots\supset \mathcal F_{s+1}$.
The following lemma is a crucial tool for the proof and may be obtained by a straightforward modification of the proof of \cite[Theorem~3.1]{F4}:

\begin{lem}[Frankl \cite{F4}]\label{lem61}  Let $N\ge (u+s)(k-1)$ for some $u\in\mathbb Z$, $u\ge s+1$, and suppose that $\mathcal F_1,\ldots, \mathcal F_{s+1}\subset{[N]\choose k-1}$ are cross-dependent and nested. Then
\begin{equation}\label{eq112} |\mathcal F_1|+|\mathcal F_2|+\ldots +|\mathcal F_{s}|+u|\mathcal F_{s+1}|\le s{N\choose k-1}.\end{equation}
\end{lem}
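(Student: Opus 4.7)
My plan is to prove Lemma~\ref{lem61} by induction on $s$, after a shifting reduction and a link-decomposition at element~$1$.

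\textbf{Shifting and base case.} I first apply simultaneous $(i,j)$-shifts to the whole chain $\ff_1, \ldots, \ff_{s+1}$. Shifting preserves uniformity, the sizes $|\ff_\ell|$, the inclusion chain (since $S_{i,j}$ is monotone under containment), and cross-dependence (a disjoint selection in the shifted chain pulls back to one in the original). So I may assume all $\ff_\ell$ are shifted. The base case $s = 1$, with $N \ge (u+1)(k-1)$, is a direct Erd\H os--Ko--Rado application: if $\ff_2 = \emptyset$ the bound is trivial, and otherwise any $F_2 \in \ff_2 \subset \ff_1$ intersects every $F_1 \in \ff_1$ by cross-dependence, so $\ff_1$ is intersecting and $|\ff_1|, |\ff_2| \le \binom{N-1}{k-2}$, giving $|\ff_1| + u|\ff_2| \le (u+1)\binom{N-1}{k-2} \le \binom{N}{k-1}$ thanks to the hypothesis on $N$.

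\textbf{Link decomposition.} For $s \ge 2$, I split each $\ff_i$ at element $1$ and set
$$\aaa_i := \{F \setminus \{1\} : 1 \in F \in \ff_i\} \subset \binom{[2,N]}{k-2}, \qquad \mathcal B_i := \{F \in \ff_i : 1 \notin F\} \subset \binom{[2,N]}{k-1},$$
so $|\ff_i| = |\aaa_i| + |\mathcal B_i|$; both chains $(\aaa_i)$ and $(\mathcal B_i)$ are nested, and shiftedness yields the shadow inclusion $\partial \mathcal B_i \subseteq \aaa_i$. Two cross-dependence properties hold: the chain $\mathcal B_1, \ldots, \mathcal B_{s+1}$ is cross-dependent in $\binom{[2,N]}{k-1}$ by restriction, and the mixed tuple $\mathcal B_1, \ldots, \mathcal B_s, \aaa_{s+1}$ is cross-dependent in $[2,N]$ (mixed uniformity), because a disjoint selection $(B_1, \ldots, B_s, A)$ would yield the disjoint selection $(B_1, \ldots, B_s, A \cup \{1\}) \in \ff_1 \times \cdots \times \ff_{s+1}$, violating the cross-dependence of the original chain.

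\textbf{Combining the bounds.} Using $|\ff_i| = |\aaa_i| + |\mathcal B_i|$ and $\binom{N}{k-1} = \binom{N-1}{k-1} + \binom{N-1}{k-2}$, I aim to establish
$$\sum_{i=1}^s |\mathcal B_i| + u|\mathcal B_{s+1}| \le s\binom{N-1}{k-1}, \qquad \sum_{i=1}^s |\aaa_i| + u|\aaa_{s+1}| \le s\binom{N-1}{k-2}.$$
The first I obtain by applying the inductive form of the lemma to $(\mathcal B_i)$ on ground set $[2,N]$ with a suitably adjusted weight $u'$; the loss from reducing the ground set from $N$ to $N-1$ is compensated by invoking the mixed cross-dependence of $(\mathcal B_1, \ldots, \mathcal B_s, \aaa_{s+1})$ to extract an extra saving. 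The second I obtain by combining a Kruskal--Katona--type shadow argument (exploiting $\partial \mathcal B_i \subseteq \aaa_i$) with the inductive lemma applied to $(k-2)$-uniform chains.

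\textbf{Main obstacle.} The hard part will be the precise bookkeeping of the weight $u$ through the decomposition, in particular at the boundary $u = s+1$ and $N = (u+s)(k-1)$, where a naive ground-set reduction would drop $u$ below the admissible range $u \ge s+1$ needed to re-apply the inductive hypothesis. Following the scheme of \cite[Thm.~3.1]{F4}, the remedy is a sharper use of the mixed cross-dependence $(\mathcal B_1, \ldots, \mathcal B_s, \aaa_{s+1})$ combined with a re-weighting of the two subsystems, so that the extra disjointness constraint exactly absorbs the loss. Verifying this balance is the technical heart of the ``straightforward modification'' announced in the statement.
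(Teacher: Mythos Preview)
Your plan has two concrete errors, and neither matches the route actually taken in \cite{F4}. First, the base case is wrong: from cross-dependence of $\ff_1,\ff_2$ you only get that every $F_1\in\ff_1$ meets a fixed $F_2\in\ff_2$, which says that $\ff_1$ has a cover of size $k-1$, not that $\ff_1$ is intersecting. For instance, with $k-1=2$, $N=6$, $\ff_2=\{\{1,2\}\}$ and $\ff_1=\{F\in\binom{[6]}{2}:F\cap\{1,2\}\ne\emptyset\}$, the pair $\{1,3\},\{2,4\}\in\ff_1$ is disjoint, so the Erd\H os--Ko--Rado bound $|\ff_1|\le\binom{N-1}{k-2}$ is false here. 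Second, your inductive scheme is incoherent: you announce induction on $s$, but the link decomposition at the element $1$ produces $\mathcal B_1,\ldots,\mathcal B_{s+1}$ with the same number of families and the same uniformity; nothing decreases $s$. What actually drops is $N$ (for the $\mathcal B_i$) and $k$ (for the $\aaa_i$), so at best you are describing a double induction on $(N,k)$, whose base cases would be $k-1=1$ or $N$ minimal --- not $s=1$. You then acknowledge that the weight bookkeeping at the boundary $N=(u+s)(k-1)$ is unresolved; that is precisely the whole content of the lemma, so the proposal is a restatement rather than a proof.

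For comparison, the argument in \cite{F4} (of which Lemma~\ref{lem61} is the announced ``straightforward modification'') uses no induction and no link decomposition. One takes a uniformly random permutation of $[N]$, partitions the first $(u+s)(k-1)$ elements into $u+s$ consecutive blocks $A_1,\ldots,A_{u+s}$ of size $k-1$, and observes that by nestedness the set of indices $j$ with $A_j\in\ff_i$ is decreasing in $i$; cross-dependence then forces that for at most $s$ of the $u+s$ blocks $A_j$ one has $A_j\in\ff_{s+1}$, and more generally gives a pointwise bound on $\sum_{i\le s}|\{j:A_j\in\ff_i\}|+u\,|\{j:A_j\in\ff_{s+1}\}|$. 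Averaging over the permutation turns this pointwise inequality into \eqref{eq112}. The ``modification'' consists only in allowing a real parameter $u$ and tracking the weighted sum; no new structural idea is needed.
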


%Recall that by $\mathcal G(i)$ we denote the family $\{G-\{i\}: G\in \mathcal G, i\in G\}$.

We use the following notation. For a family $\mathcal G\subset 2^{[n]}$, all $p\in [n]$ and $Q\subset [p]$ define $$\mathcal G(Q,p) :=\big\{G\setminus Q: G\in \mathcal G, G\cap [p]=Q\big\}.$$
The first step of the proof of Theorem \ref{thmhil} is the following lemma.

\begin{lem}\label{lem62} Assume that $|\mathcal G|-|\mathcal G(\emptyset,s)|\le {n\choose k}-{n-s\choose k}-C$ for some $C>0$ and that $\nu(\mathcal G(\emptyset,s))=x$ for some $x\in [s]$. Then
\begin{equation}\label{eq128} |\mathcal G|\le  {n\choose k}-{n-s\choose k}-\frac{u-x-1}{u}C.\end{equation}
Moreover, if $\nu(\mathcal G(\emptyset,s))=\tau(\mathcal G(\emptyset,s))=1$, then
\begin{equation}\label{eq129} |\mathcal G|\le  {n\choose k}-{n-s\choose k}-\frac{u-1}{u}C.\end{equation}
\end{lem}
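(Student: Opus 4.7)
My plan is to apply Lemma~\ref{lem61} to suitable $(s+1)$-tuples of $(k-1)$-uniform link families of $\mathcal G$ and aggregate the resulting inequalities to extract the $(x+1)/u$ factor. For each $m\in[s+1,n]$, set $M_m:=[s]\cup\{m\}$ and, on the ground set $[n]\setminus M_m$ (of size $n-s-1=(u+s)(k-1)$), consider for $j\in M_m$ the $(k-1)$-uniform families
\[
\mathcal F_j^{(m)}:=\bigl\{G\setminus\{j\}: G\in\mathcal G,\ G\cap M_m=\{j\}\bigr\}.
\]
Shiftedness of $\mathcal G$ together with $m\ge s+1$ makes the tuple $(\mathcal F_1^{(m)},\dots,\mathcal F_s^{(m)},\mathcal F_m^{(m)})$ nested with $\mathcal F_m^{(m)}$ smallest; $\nu(\mathcal G)=s$ makes it cross-dependent, since an $(s+1)$-system of pairwise disjoint representatives $T_j\in\mathcal F_j^{(m)}$ would lift to an $(s+1)$-matching $\{\{j\}\cup T_j:j\in M_m\}$ in $\mathcal G$. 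Lemma~\ref{lem61} therefore yields
\[
\sum_{j\in[s]}|\mathcal F_j^{(m)}|+u\,|\mathcal F_m^{(m)}|\le s\binom{n-s-1}{k-1}.\qquad(\star_m)
\]

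I will then sum $(\star_m)$ over $m\in M:=[s+1,s+x+1]$. Since $\mathcal F_m^{(m)}=\{G\setminus\{m\}:G\in\mathcal G(\emptyset,s),\ m\in G\}$, a direct double count gives $\sum_{m\in M}|\mathcal F_m^{(m)}|=\sum_{G\in\mathcal G(\emptyset,s)}|G\cap M|$, and similarly $\sum_{m\in M}|\mathcal F_j^{(m)}|=(x+1)|\mathcal G(\{j\},s)|-\sum_{G\in\mathcal G(\{j\},s)}|G\cap M|$ for $j\in[s]$. Using the Pascal identity $\binom{n-s-1}{k-1}+\binom{n-s-1}{k-2}=\binom{n-s}{k-1}$, the trivial upper bound $s\binom{n-s-1}{k-2}$ on $|\{G\in\mathcal G: |G\cap[s]|=1,\ m\in G\}|$, and the trivial bounds $|\mathcal G(Q,s)|\le\binom{n-s}{k-|Q|}$ for $Q\subset[s]$ with $|Q|\ge 2$, the summed inequality rearranges into
\[
(x+1)\bigl(|\mathcal G|-|\mathcal G(\emptyset,s)|\bigr)+u\sum_{G\in\mathcal G(\emptyset,s)}|G\cap M|\le(x+1)\Bigl(\binom{n}{k}-\binom{n-s}{k}\Bigr).
\]
Assuming the lower bound $\sum_{G\in\mathcal G(\emptyset,s)}|G\cap M|\ge|\mathcal G(\emptyset,s)|$, I combine with the hypothesis on $|\mathcal G|-|\mathcal G(\emptyset,s)|$ and split into the cases $|\mathcal G(\emptyset,s)|<(x+1)C/u$ versus $|\mathcal G(\emptyset,s)|\ge(x+1)C/u$; each case directly yields $\binom{n}{k}-\binom{n-s}{k}-|\mathcal G|\ge\tfrac{u-x-1}{u}C$, proving (\ref{eq128}).

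For the stronger statement (\ref{eq129}): if $\tau(\mathcal G(\emptyset,s))=1$, then shiftedness of $\mathcal G$ forces $s+1$ to be a covering vertex — were $j>s+1$ a cover, any $G\in\mathcal G(\emptyset,s)$ with $s+1\notin G$ could be shifted along $S_{s+1,j}$ to a member of $\mathcal G(\emptyset,s)$ missing $j$, a contradiction. Hence $|\mathcal F^{(s+1)}_{s+1}|=|\mathcal G(\emptyset,s)|$, and $(\star_{s+1})$ alone, together with the trivial bounds on $|\mathcal G(Q,s+1)|$ for $|Q|\ge 2$ in $[s+1]$, delivers $(u-1)/u$ in (\ref{eq129}) directly via the same case-splitting argument. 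The principal obstacle is establishing $\sum_{G\in\mathcal G(\emptyset,s)}|G\cap[s+1,s+x+1]|\ge|\mathcal G(\emptyset,s)|$ in general: for $x<k$, higher blocks of the canonical matching of $\mathcal G(\emptyset,s)$ may miss $[s+1,s+x+1]$ entirely, so the lower bound cannot be derived element-by-element and must exploit the shift structure of $\mathcal G(\emptyset,s)$ — its degree sequence on $[s+1,n]$ is decreasing with total $k|\mathcal G(\emptyset,s)|$, and together with $\nu(\mathcal G(\emptyset,s))=x$ this has to force enough "excess" in the first $x+1$ coordinates to offset the sets avoiding them.
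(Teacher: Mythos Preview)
Your approach has a genuine gap: the inequality
\[
\sum_{G\in\mathcal G(\emptyset,s)}\bigl|G\cap[s+1,s+x+1]\bigr|\ge|\mathcal G(\emptyset,s)|
\]
is the crux of the argument for (\ref{eq128}), and you leave it unproven. Your final paragraph correctly diagnoses the difficulty (sets in $\mathcal G(\emptyset,s)$ may miss $[s+1,s+x+1]$ entirely), but the appeal to a decreasing degree sequence with total $k|\mathcal G(\emptyset,s)|$ does not by itself force $\sum_{i\le x+1}d_i\ge|\mathcal G(\emptyset,s)|$; a decreasing sequence summing to $k|\mathcal G(\emptyset,s)|$ can easily have its first $x+1$ terms below $|\mathcal G(\emptyset,s)|$ when $x+1<k$, so some further structural input from $\nu=x$ is indispensable. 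You have not supplied it.

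The paper sidesteps this issue completely. It applies Lemma~\ref{lem61} only \emph{once}, with $m=s+1$ in your notation, and then bridges to $|\mathcal G(\emptyset,s)|$ via the shadow bound $x|\partial\mathcal H|\ge|\mathcal H|$ for families with $\nu(\mathcal H)\le x$ (Theorem~1.2 of \cite{F4}). Since $\mathcal G$ is shifted, $\partial\mathcal G(\emptyset,s+1)\subset\mathcal G(\{s+1\},s+1)$, whence $|\mathcal G(\emptyset,s+1)|\le x|\mathcal G(\{s+1\},s+1)|$ and therefore $|\mathcal G(\emptyset,s)|\le(x+1)|\mathcal G(\{s+1\},s+1)|$. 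Feeding this into $(\star_{s+1})$ and completing the count over $[s+1]$ yields $(|\mathcal G|-|\mathcal G(\emptyset,s)|)+\frac{u}{x+1}|\mathcal G(\emptyset,s)|\le\binom{n}{k}-\binom{n-s}{k}$, after which your own case-split finishes the job. In effect, your averaging over $m\in[s+1,s+x+1]$ is an attempt to \emph{replace} Frankl's shadow inequality by an ad hoc degree-sum bound; the paper shows that the existing tool does the work directly and without the unresolved obstacle. Your treatment of (\ref{eq129}) is fine and is essentially what the paper does in the case $x'=0$.
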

\begin{proof}Recall the definition of the {\it immediate shadow} of a family $\mathcal H$:
\begin{equation}\label{eq86}\partial \mathcal H := \bigl\{H:\exists H'\in \mathcal H, H\subset H', |H'\setminus H|=1\bigr\}.\end{equation}
 We have $\partial\mathcal G(\emptyset,s+1)\subset \mathcal G(\{s+1\},s+1)$ since $\mathcal G$ is shifted. In \cite[Theorem 1.2]{F4} the first author proved the inequality  $x|\partial \mathcal H|\ge |\mathcal H|$, valid for any $\mathcal H$ with $\nu(\mathcal H)\le x$. Due to the shiftedness of $\mathcal G$, we also have $\mathcal G(\emptyset,s+1)=\emptyset$ if $\tau(\mathcal G(\emptyset,s))=1$. Combining these three facts,  we get that
\begin{equation}\label{eq113}|\mathcal G(\emptyset,s+1)|\le x'|\mathcal G(\{s+1\},s+1)|,\end{equation}
where $x'=x$ if $\tau(\mathcal G(\emptyset,s))>1$ and $x'=0$ if $\tau(\mathcal G(\emptyset,s))=1$. Consequently, \begin{equation}\label{eq114}|\mathcal G(\emptyset,s)|\le (x'+1)|\mathcal G(\{s+1\},s+1)|.\end{equation}
Using (\ref{eq113}) and (\ref{eq112}), we have
\begin{multline*}\mathcal \sum_{i=1}^{s+1}|\mathcal G(\{i\},s+1)| + |G(\emptyset,s+1)| \le \sum_{i=1}^{s}|\mathcal G(\{i\},s+1)|+(x'+1)|\mathcal G(\{s+1\},s+1)|\le \\ \le s{n-s-1\choose k-1}-(u-x'-1)|\mathcal G(\{s+1\},s+1)|.\end{multline*}

For any $Q\subset [1,s+1]$, $|Q|\ge 2$, we have $|\aaa_1^{(k)}(n,s)(Q,s+1)| = {[s+2,n]\choose k-|Q|},$ and thus $|\mathcal G(Q,s+1)|\le |\mathcal \aaa_1^{(k)}(n,s)(Q,s+1)|$. We also have $\aaa_1^{(k)}(n,s)(\emptyset,s+1) = \emptyset$ and $\sum_{i=1}^{s+1}|\aaa_1^{(k)}(n,s)(\{i\},s+1)| = s{n-s-1\choose k-1}$.

Therefore, $|\aaa_1^{(k)}(n,s)|-|\mathcal G|\ge (u-x'-1)|\mathcal G(\{s+1\},s+1)|\overset{(\ref{eq114})}{\ge}\frac{u-x'-1}{x'+1} |\mathcal G(\emptyset,s)|.$ On the other hand, the inequality in the assumptions of the lemma may be formulated as $|\aaa_1^{(k)}(n,s)|-|\mathcal G|\ge C - |\mathcal G(\emptyset,s)|$. Adding these two inequalities (the second one multiplied by $\frac{u-x'-1}{x'+1}$), we obtain that $|\aaa_1^{(k)}(n,s)|-|\mathcal G|\ge \frac{u-x'-1}uC$.
\end{proof}

Now, to prove Theorem \ref{thmhil}, it is sufficient to obtain good bounds on $C$ from the formulation of Lemma \ref{lem62}. We do that in the next two propositions. We use the following simple observation:
\begin{obs}\label{obs3} If for some $C>0$, $S\subset [s]$ and $\mathcal B\subset {[s+1,n]\choose k-1}$ we have $\sum_{i\in S} \bigl|\mathcal G(\{i\},s)\cap \mathcal B\bigr|\le |S||\mathcal B|-C,$ then both $\sum_{i\in S} \bigl|\mathcal G(\{i\},s)\bigr|\le |S|{n-s\choose k-1}-C$ and $|\mathcal G|-|\mathcal G(\emptyset,s)|\le {n\choose k}-{n-s\choose k}-C$.
\end{obs}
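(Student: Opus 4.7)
The observation reduces to two short bookkeeping steps; my plan is to prove the first conclusion directly and then bootstrap the second from it.

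For the first conclusion, I would exploit the trivial containment $\mathcal G(\{i\},s) \subset \binom{[s+1,n]}{k-1}$, which gives $|\mathcal G(\{i\},s) \setminus \mathcal B| \le \binom{n-s}{k-1} - |\mathcal B|$ for every $i \in S$. Then I split
$$\sum_{i\in S} |\mathcal G(\{i\},s)| = \sum_{i\in S} |\mathcal G(\{i\},s) \cap \mathcal B| + \sum_{i\in S} |\mathcal G(\{i\},s) \setminus \mathcal B|,$$
apply the hypothesis to the first sum and the trivial bound to the second. The two occurrences of $|S||\mathcal B|$ cancel and I am left with exactly $|S|\binom{n-s}{k-1} - C$.

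For the second conclusion, the plan is to partition the sets of $\mathcal G$ with nonempty trace on $[s]$ according to that trace:
$$|\mathcal G| - |\mathcal G(\emptyset,s)| = \sum_{\emptyset \ne Q \subset [s]} |\mathcal G(Q,s)|.$$
I would bundle the $|S|$ singleton terms $\{i\}$ with $i \in S$ and bound their sum by the first conclusion. For every other nonempty $Q \subset [s]$ (including singletons $\{i\}$ with $i \in [s] \setminus S$ and all $Q$ with $|Q| \ge 2$) I use the trivial bound $|\mathcal G(Q,s)| \le \binom{n-s}{k-|Q|}$. Summing these estimates and invoking the Vandermonde identity
$$\binom{n}{k} = \sum_{Q \subset [s]} \binom{n-s}{k-|Q|}$$
produces exactly $\binom{n}{k} - \binom{n-s}{k} - C$.

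There is no genuine obstacle; the only subtlety is that the saving $C$ must enter as a single aggregate bound on the $S$-indexed sum. Applying the singleton inequality term by term would restore $|S|\binom{n-s}{k-1}$ and lose the saving, so the proper way to proceed is to invoke the first conclusion as one bundled estimate rather than $|S|$ separate ones.
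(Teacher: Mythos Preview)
Your proof is correct and is exactly the natural argument the paper implicitly has in mind; the paper states Observation~\ref{obs3} without proof, and the surrounding text (e.g.\ the treatment of $|\mathcal G(Q,s+1)|\le {n-s-1\choose k-|Q|}$ in the proof of Lemma~\ref{lem62}) makes clear that the intended justification is precisely your split-and-Vandermonde computation.
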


We are going to use the next proposition and lemma for the case $\nu(\mathcal G(\emptyset,s))\ge 2$. Assume that $\mathcal G(\emptyset,s)$ contains $x$ pairwise disjoint sets $F_1,\ldots, F_x$ for some $x\in [s]$.
Put $\mathcal B_j:={[s+1,n]\setminus F_j\choose k-1}$.
\begin{prop}\label{prop19} Under the assumption above, choose a positive integer  $q$ and integers $0=:p_0< p_1<p_2<\ldots<p_q:=x$. Put $f:=\prod_{j=1}^q(p_j-p_{j-1})$. Then for $u\ge qf+\frac{q-1}{k-1}$ we have
 \begin{align}\label{eq171}&\sum_{i=1}^{s}\bigl|\mathcal G(\{i\},s)\bigr|\le s{n-s\choose k-1}-q\Bigl|\bigcap_{j=0}^{q-1}\Bigl(\bigcup_{z=p_j+1}^{p_{j+1}}\mathcal B_z\Bigr)\Bigr|.\end{align}
\end{prop}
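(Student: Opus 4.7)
The plan is to extract a per-tuple inequality from cross-dependence plus Lemma~\ref{lem61}, then close via a weighted-to-unweighted pincer and Observation~\ref{obs3}. For each tuple $\mathbf z=(z_0,\ldots,z_{q-1})$ with $z_j\in[p_j+1,p_{j+1}]$ set $S_{\mathbf z}=F_{z_0}\cup\cdots\cup F_{z_{q-1}}$ and $\mathcal D_{\mathbf z}=\binom{[s+1,n]\setminus S_{\mathbf z}}{k-1}$, so $\mathcal D=\bigcup_{\mathbf z}\mathcal D_{\mathbf z}$. The nested families $\mathcal G_1\cap\mathcal D_{\mathbf z}\supset\cdots\supset\mathcal G_{s-q+1}\cap\mathcal D_{\mathbf z}$ are cross-dependent inside $\binom{[s+1,n]\setminus S_{\mathbf z}}{k-1}$: any pairwise disjoint transversal $(G_1,\ldots,G_{s-q+1})$ would combine with $F_{z_0},\ldots,F_{z_{q-1}}$ to form the $(s+1)$-matching $\{G_i\cup\{i\}\}_{i=1}^{s-q+1}\cup\{F_{z_j}\}_{j=0}^{q-1}$ in $\mathcal G$, contradicting $\nu(\mathcal G)=s$. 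Since $|[s+1,n]\setminus S_{\mathbf z}|=n-s-qk=(u+s-q)(k-1)+1-q$, the hypothesis $u\ge qf+\frac{q-1}{k-1}$ together with the standing $u\ge s+1$ permits Lemma~\ref{lem61} with ``$s$'' replaced by $s-q$ and parameter $u^*=\max\{qf,s-q+1\}$, giving for every $\mathbf z$
\begin{equation*}
\sum_{i=1}^{s-q}|\mathcal G_i\cap\mathcal D_{\mathbf z}|+u^*|\mathcal G_{s-q+1}\cap\mathcal D_{\mathbf z}|\le(s-q)|\mathcal D_{\mathbf z}|,\qquad u^*\ge qf.
\end{equation*}

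Define $w(H):=|\{\mathbf z:H\in\mathcal D_{\mathbf z}\}|$, so $1\le w(H)\le f$ on $\mathcal D$ and $w(H)=0$ elsewhere, and $\rho(H):=\max\{i:H\in\mathcal G_i\}$ (zero if none). Summing the above over the $f$ tuples $\mathbf z$ and separating by the sign of $\min(\rho(H),s-q)+u^*\mathbf{1}_{\rho(H)\ge s-q+1}-(s-q)$ rearranges to
\begin{equation*}
qf\,A\le B,\quad A:=\!\!\sum_{\substack{H\in\mathcal D\\ \rho(H)\ge s-q+1}}\!\!w(H),\quad B:=\!\!\sum_{\substack{H\in\mathcal D\\ \rho(H)\le s-q}}\!\!\![(s-q)-\rho(H)]w(H).
\end{equation*}
Converting this to the unweighted target $\sum_{H\in\mathcal D}\rho(H)\le(s-q)|\mathcal D|$ is the heart of the argument. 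Using $w\le f$ on $\mathcal D$ yields $B\le fB'$ with $B':=\sum_{H\in\mathcal D,\,\rho\le s-q}[(s-q)-\rho(H)]$, whence $qA\le B'$. Using $w\ge 1$ on $\mathcal D$ gives $A\ge|\mathcal G_{s-q+1}\cap\mathcal D|$, and since $\rho(H)-(s-q)\in[1,q]$ on the positive range, $A':=\sum_{H\in\mathcal D,\,\rho\ge s-q+1}[\rho(H)-(s-q)]\le q|\mathcal G_{s-q+1}\cap\mathcal D|\le qA\le B'$. Hence $A'\le B'$, i.e.\ $\sum_{i=1}^s|\mathcal G_i\cap\mathcal D|\le(s-q)|\mathcal D|$, and Observation~\ref{obs3} applied with $S=[s]$, $\mathcal B=\mathcal D$, $C=q|\mathcal D|$ delivers (\ref{eq171}).

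The main obstacle is precisely this pincer. Summing Lemma~\ref{lem61} naively over the $f$ tuples $\mathbf z$ carries an extra multiplicative factor of $f$ on the wrong side of the bound, and the argument closes only because the hypothesis $u\ge qf+\frac{q-1}{k-1}$ is calibrated to produce the coefficient $u^*\ge qf$ that matches the worst-case ratio $f$ between the weights $w(H)$ on $\mathcal D$ and their lower bound $1$. The degenerate case $s=q$ is immediate, since cross-dependence of a single family forces $\mathcal G_1\cap\mathcal D_{\mathbf z}=\emptyset$ for every $\mathbf z$, hence $\sum_i|\mathcal G_i\cap\mathcal D|=0$.
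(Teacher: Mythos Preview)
Your argument is correct and reaches the same conclusion, but it is organized differently from the paper's proof. The paper does \emph{not} sum Lemma~\ref{lem61} over all tuples $\mathbf z$. Instead it fixes $y:=|\mathcal G(\{s-q+1\},s)\cap\mathcal D|$, uses nestedness to bound $\sum_{i=s-q+1}^s|\mathcal G(\{i\},s)\cap\mathcal D|\le qy$, and then applies pigeonhole to select a \emph{single} tuple $\mathbf z'$ with $|\mathcal G(\{s-q+1\},s)\cap\mathcal D_{\mathbf z'}|\ge y/f$. Lemma~\ref{lem61} is invoked once, on that tuple only, with coefficient $qf$, yielding $\sum_{i=1}^{s-q}|\mathcal G(\{i\},s)\cap\mathcal D_{\mathbf z'}|\le (s-q)|\mathcal D_{\mathbf z'}|-qy$. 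Observation~\ref{obs3} is then applied twice, to $S=[s-q+1,s]$ with $\mathcal B=\mathcal D$ and to $S=[1,s-q]$ with $\mathcal B=\mathcal D_{\mathbf z'}$, and the two resulting inequalities are added; the $\pm qy$ terms cancel.

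Your route replaces the pigeonhole selection by an averaging argument: you sum Lemma~\ref{lem61} over all $f$ tuples, obtaining the weighted inequality $u^*A\le B$, and then run the pincer $w\le f$, $w\ge 1$ to convert it to the unweighted $\sum_{i=1}^s|\mathcal G_i\cap\mathcal D|\le(s-q)|\mathcal D|$, followed by a single application of Observation~\ref{obs3}. The two arguments are driven by the same arithmetic --- the hypothesis $u\ge qf+\frac{q-1}{k-1}$ is calibrated so that the coefficient $qf$ exactly absorbs the factor $f$, whether that factor comes from pigeonhole loss (paper) or from the worst weight ratio (your version). The paper's version is shorter and avoids the explicit weight bookkeeping; your version is more symmetric in that it never singles out a special tuple and applies Observation~\ref{obs3} only once.
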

\begin{proof}
For each $i\in[s]$ denote $$\mathcal I(\{i\},s) := \mathcal G(\{i\},s)\cap \bigcap_{j=0}^{q-1}\Bigl(\bigcup_{z=p_j+1}^{p_{j+1}}\mathcal B_z\Bigr).$$

Assume that $|\mathcal I(\{s-q+1\},s)|=y$. Then, since  $\mathcal I(\{i\},s)\supset\mathcal I(\{i+1\},s)$ for any $i\in [s-1]$, we have

\begin{equation}\label{eq172}\sum_{i=s-q+1}^s|\mathcal I(\{i\},s)|\le qy.\end{equation}
Applying Observation \ref{obs3} with $S = [s-q+1,s]$ and $\bb = \bigcap_{j=0}^{q-1}\Bigl(\bigcup_{z=p_j+1}^{p_{j+1}}\mathcal B_z\Bigr),$ we get

\begin{equation}\label{eq181}\sum_{i=s-q+1}^s \bigl|\mathcal G(\{i\},s)\bigr|\le q{n-s\choose k-1}-q\Bigl|\bigcap_{j=0}^{q-1}\Bigl(\bigcup_{z=p_j+1}^{p_{j+1}}\mathcal B_z\Bigr)\Bigr|+qy.\end{equation}

Note that $$\bigcap_{j=0}^{q-1}\Bigl(\bigcup_{z=p_j+1}^{p_{j+1}}\mathcal G(\{s-q+1\},s)\cap\mathcal B_z\Bigr) = \bigcup_{z_0=p_0+1}^{p_1}\bigcup_{z_1=p_1+1}^{p_2}\ldots \bigcup_{z_{q-1}=p_{q-1}+1}^{p_q}\Big(\bigcap_{j=0}^{q-1}\mathcal G(\{s-q+1\},s)\cap \mathcal B_{z_j}\Big).$$
Since $|\mathcal I(\{s-q+1\},s)|=y$, by the pigeon-hole principle from the equality above we infer that there exist a choice of $z_0'\in [p_0+1,p_1],\ldots, z_{q-1}'\in[p_{q-1}+1,p_q]$, such that
\begin{equation}\label{eq173} \Bigl|\mathcal G(\{s-q+1\},s)\cap \bigcap_{j=0}^{q-1}\mathcal B_{z_j'}\Bigr|\ge \frac yf.\end{equation}

 Next,  the families $\mathcal G_z(\{1\},s),\ldots, \mathcal G_z(\{s-q+1\},s)$, where $\mathcal G_z(\{i\},s):= \mathcal G(\{i\},s)\cap \bigcap_{j=0}^{q-1}\mathcal B_{z_j'},$ are cross-dependent and nested. Indeed, if the families are not cross-dependent, and there exist $G_1\in \mathcal G_z(\{1\},s),\ldots, G_{s-q+1}\in \mathcal G_z(\{s-q+1\},s)$ that are pairwise disjoint, then $G_1,\ldots, G_{s-q+1}, F_{z_0'},\ldots, F_{z_{q-1}'}$ form an $(s+1)$-matching in $\mathcal G$. Note that $\bigcap_{j=0}^{q-1}\mathcal B_{z_j} = {S'\choose k-1}$, where $S':=[s+1,n]\setminus (F_{z_0'}\cup\ldots\cup F_{z_{q-1}'})$, and $|S'|=(u+s)(k-1)+1-qk$.
 %\ge (qf+s-q)(k-1)$, where the last inequality is due to the assumption on $u$.
 We have $u':=u-\frac {q-1}{k-1}\ge s-q+1$ since $u\ge s+1$. Moreover, $|S'|= (u'+s-q)(k-1)$. Thus, we may apply \eqref{eq112} with $\lfloor u'\rfloor,|S'|,s-q$ playing the roles of $u,N, s$, respectively. From (\ref{eq112}) and the inequality $u'=u-\frac {q-1}{k-1}\ge fq$ we get
\begin{equation}\label{eq174}|\mathcal G_z(\{1\},s)|+\ldots +|\mathcal G_z(\{s-q\},s)|+fq|\mathcal G_z(\{s-q+1\},s)|\le (s-q)\Bigl|\bigcap_{j=0}^{q-1}\mathcal B_{z_j}\Bigr|,\end{equation}
which, in view of (\ref{eq173}), gives us
\begin{equation}\label{eq175}\sum_{i=1}^{s-q}|\mathcal G_z(\{i\},s)|\le (s-q)\bigl|\bigcap_{j=0}^{q-1}\mathcal B_{z_j}\bigr| -qy.\end{equation}
Applying Observation \ref{obs3} with $S = [1,s-q]$ and $\bb = \bigcap_{j=0}^{q-1}\mathcal B_{z_j}$, we get
\begin{equation}\label{eq182} \sum_{i=1}^{s-q} \bigl|\mathcal G(\{i\},s)\bigr|\le (s-q){n-s\choose k-1}-qy.
\end{equation}
%Note that by the assumption on $u$ we have $\frac {uy}f\ge qy$.
We get the statement of the proposition by summing up (\ref{eq181}) and (\ref{eq182}).
 \end{proof}

The main difficulty in using Proposition~\ref{prop19} directly is that it is very difficult to deal with sums/products of binomial coefficients that arise when writing down the subtrahend in \eqref{eq171} explicitly. The proof of the following lemma is a way to work around it. The lemma itself is an important technical ingredient in establishing  bounds on $u$ in Theorem \ref{thmhil}.

\begin{lem}\label{lem8} Assume that $\nu\bigl(\mathcal G(\emptyset,s)\bigr) \ge x$ for $x\in [s]$. Then  we have
 \begin{equation}\label{eq176}\sum_{i=1}^{s}\bigl|\mathcal G(\{i\},s)\bigr|\le s{n-s\choose k-1}-\gamma {n-k-s\choose k-1},\end{equation}
 where \ \ \ \ \ \ \ (i)\ \ $\gamma = \frac 43$ \  \, for $x=2$ \ \ \  and $u\ge 3$, \ \ \ \ \ \ \ \ \  (ii) $\gamma = \frac 32$ for $x=3$ \ and $u\ge 5$,\\ \phantom{esdfddd} \ \ \ \  (iii) $\gamma =\frac {16}9$ \  for $x=4,5$\ \  and $u\ge 9$,  \phantom{sdef}  \ \ \  (iv) $\gamma = 2 $ for $x\ge 6$\ \ and $u\ge 25$,\ \  \\ \phantom{eklrdfddfsfd}  (v)\, $\gamma = \Omega(x/\log_2^2 x)$ \ \ \ for $u\ge 2^x$.
\end{lem}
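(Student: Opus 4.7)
The plan is to apply Proposition~\ref{prop19} to several choices of the auxiliary partition $0 = p_0 < p_1 < \ldots < p_q = x$, each yielding an upper bound on $\sum_{i=1}^s|\mathcal G(\{i\},s)|$ of the form $s\binom{n-s}{k-1} - q\bigl|\bigcap_j \bigcup_z \mathcal B_z\bigr|$, and then to combine them. The first step is to expand $\bigl|\bigcap_{j=1}^q \bigcup_{z=p_{j-1}+1}^{p_j}\mathcal B_z\bigr|$ by inclusion--exclusion on each union. Since $F_1,\ldots,F_x$ are pairwise disjoint, $|\bigcap_{z\in S}\mathcal B_z| = b_{|S|}$ for every $S\subset[x]$, where $b_t := \binom{n-s-tk}{k-1}$. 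The generating function $\prod_{j=1}^q\bigl(1-(1-y)^{p_j-p_{j-1}}\bigr) = \sum_t a_t y^t$ encodes the coefficients, so that $|\bigcap\bigcup| = \sum_t a_t b_t$; dividing by $b_1$ expresses the resulting bound $\gamma$ as an explicit polynomial in $\beta_t := b_t/b_1 \in [0,1]$.

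A key structural input is that $(b_t)$ is log-concave, equivalently that $b_{t+1}/b_t$ is nonincreasing in $t$; this follows directly from the binomial form. Consequently $(\beta_t)$ lies in the explicit polytope $\{1 = \beta_1 \ge \beta_2 \ge \ldots \ge \beta_x \ge 0,\ \beta_t^2 \ge \beta_{t-1}\beta_{t+1}\}$, so the min--max analyses below reduce to a handful of parameters.

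For (i)--(iv) I would pick two or three partitions calibrated to the target $\gamma$ and to the allowed $u$, then form a suitable convex combination whose $\beta_t$-dependent terms either cancel or are controlled using log-concavity. For (i) with $x=2$ the partitions $(0,2)$ and $(0,1,2)$ give $\gamma_A = 2 - \beta_2$ and $\gamma_B = 2\beta_2$, requiring $u\ge 2$ and $u\ge 2+1/(k-1)$ respectively; the weighted average $\tfrac{2}{3}\gamma_A + \tfrac{1}{3}\gamma_B \equiv \tfrac{4}{3}$ proves $\max(\gamma_A,\gamma_B)\ge 4/3$. For (ii) with $x=3$, partitions $(0,3)$ and $(0,1,3)$ satisfy $\gamma_A + \gamma_B = 3 + \beta_2 - \beta_3 \ge 3$, so $\max \ge 3/2$. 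Similar but more intricate three- or four-partition arguments handle (iii) and (iv); the partition with the largest $qf + (q-1)/(k-1)$ dictates the stated lower bound on $u$ (for instance $(0,2,4,6)$ with $f=8$ forces $u\ge 25$ in case (iv)).

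For (v) I would use a partition into $q \sim x/\log_2^2 x$ roughly equal groups of size $p \sim \log_2^2 x$. The condition $u \ge 2^x$ comfortably implies $u \ge qf + (q-1)/(k-1)$ since $\log_2(qf) \approx q\log_2 p = O(x \log\log x / \log^2 x) = o(x)$. Using $u \ge 2^x$, a direct calculation shows $\beta_t = 1 - O(x/2^x)$ uniformly in $t\le x$, whence $|\bigcap\bigcup|/b_1 \to 1$ and $\gamma \ge q(1-o(1)) = \Omega(x/\log_2^2 x)$. The main obstacle will be the case analysis in (iii)--(iv), where several partitions must be combined and a tight weighting chosen to reach the prescribed numerical $\gamma$. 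A secondary but genuine difficulty in (v) is to keep the asymptotic estimate of $|\bigcap\bigcup|$ sharp enough to preserve the $\log_2^{-2} x$ factor.
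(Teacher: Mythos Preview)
Your treatment of (i)--(iv) is essentially the paper's own argument, just phrased in the $\beta_t = b_t/b_1$ coordinates rather than the paper's ``$\ell$-intersection'' coordinates $\alpha_\ell$. Taking the maximum of several $\gamma_j$'s and bounding it below by a convex average is exactly the same manoeuvre as the paper's ``sum up the inequalities from Proposition~\ref{prop19} with coefficients $\beta_j$ and divide by $\sum\beta_j$''; indeed your weights $(\tfrac23,\tfrac13)$ in (i) reproduce the paper's coefficients $(1,\tfrac12)$ after normalisation, and the partition $(0,2,4,6)$ you name for (iv) is one of the four the paper uses.

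For (v), however, there is a genuine gap. The claim that $\beta_t = 1 - O(x/2^x)$ uniformly in $t\le x$ is false: the hypothesis $u\ge 2^x$ gives no control over $k$. With $N=n-s-k=(u+s-1)(k-1)$ one has
\[
\beta_t \;=\; \prod_{j=0}^{k-2}\Bigl(1-\tfrac{(t-1)k}{N-j}\Bigr),
\]
so for $u=2^x$ and, say, $k-1 = 2^x$ already $\beta_2 \approx (1-2^{-x})^{k-1}\approx e^{-1}$, and for larger $k$ it tends to $0$. Consequently $|\bigcap\bigcup|/b_1$ is \emph{not} close to $1$, and a single partition into $q\sim x/\log_2^2 x$ blocks cannot deliver $\gamma = \Omega(q)$ in general: the only universal lower bound on $|\bigcap\bigcup|$ from one partition is $b_q$, and $q\beta_q$ can be arbitrarily small.

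The paper sidesteps this entirely by never invoking numerical properties of the $\beta_t$ (equivalently $\alpha_\ell$). It takes a weighted combination of $r+1$ partitions $M_j$ with $q(j)=2^j$, $j=0,\dots,r$ (where $x=2^r$), and proves the purely combinatorial inequality $\eta'_\ell\ge\eta_\ell$ for every $\ell$, i.e.\ that the combined subtrahend dominates the decomposition $\sum_{z}|\mathcal B_z|=|\bigcup_z\mathcal B_z|+\sum_\ell\eta_\ell\alpha_\ell$ term by term. This holds for \emph{all} nonnegative $\alpha_\ell$, hence for all $k$. To repair your (v) you would need a similar multi-scale scheme (or an inequality between generating functions valid on the whole range of $(\beta_t)$, not just near $(1,\dots,1)$); the single-partition route cannot work.
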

\begin{proof}
The logic of the proofs of all five statements is similar. We combine the bounds from Proposition \ref{prop19} with different parameters to get the bound of the form $\beta \sum_{i=1}^{s}\bigl|\mathcal G(\{i\},s)\bigr|\le \beta s{n-s\choose k-1}-\sum_{z=1}^{x}|\mathcal B_z|$ for the smallest possible $\beta$. Then the constant $\gamma$ from the statement of Lemma \ref{lem8} is defined as $\gamma := x/\beta$. Since $|\mathcal B_z| = {n-k-s\choose k-1}$ for any $z$, we get the statement, as long as we can guarantee the bounds on $\beta$ we claim. Therefore, we aim to find a linear combination of equations (\ref{eq171}) with coefficients $\beta_j$, which satisfies the following two conditions:
\begin{align}\label{eqa}&\text{a) the sum of the subtrahends in the RHS is at least } \sum_{z=1}^{x}|\mathcal B_z|,\\
 &b)\ \beta:=\sum\beta_j \text{ is as small as possible.}\end{align}

For any $S\subset [x]$ we introduce the following notation: $$\overline{\bigcap_{j\in S}}\mathcal B_{j}:= \Bigr(\bigcap_{j\in S}\mathcal B_{j}\Bigr)\setminus \Bigl(\bigcup_{j\in [x]\setminus S}\mathcal B_{j}\Bigr).$$
(Note that this definition depends on $x$, but the value of $x$ will be clear from the context.) Consider the following inclusion-exclusion-type decomposition:\begin{small}
\begin{equation}\label{eq177}\sum_{z=1}^{x}|\mathcal B_z| = \Big|\bigcup_{z=1}^{x}\mathcal B_z\Big|+\sum_{1\le z_1< z_2\le x}|\mathcal B_{z_1}\overline{\cap} \mathcal B_{z_2}|+2\sum_{1\le z_1< z_2<z_3\le x}\Big|\overline{\bigcap}_{j=1}^3\mathcal B_{z_j}\Big|+\ldots+(x-1)\Big|\overline{\bigcap}_{j=1}^x \mathcal B_j\Big|.\end{equation}\end{small}
The cardinalities of intersections in (\ref{eq177}) are determined by the number of intersecting families, but do not depend on the actual families that are intersecting. The number of summands of the form $\big|\overline{\bigcap}_{j=1}^{\ell}\mathcal B_{z_j}\big|$, multiplied by the coefficient $(\ell-1)$, is $(\ell-1){x\choose \ell}$ for any $2\le \ell\le x$. We call each of the families of the form $\overline{\bigcap}_{j=1}^{\ell}\mathcal B_{z_j}$ an \textit{$\ell$-intersection}. We denote its size by $\alpha_{\ell}$.
Putting $\eta_{\ell}:=(\ell -1){x\choose \ell}$ for each $2\le \ell\le x$, we can rewrite \eqref{eq177} as
\begin{equation}\label{eq177'}
  \sum_{z=1}^{x}|\mathcal B_z| = \Big|\bigcup_{z=1}^{x}\mathcal B_z\Big|+ \sum_{\ell=2}^x\eta_{\ell}\alpha_{\ell}.
\end{equation}

Each subtrahend in (\ref{eq171}) also admits a unique decomposition into $l$-intersections, analogous to (\ref{eq177}) (we will see some examples below). In the proof of each part of Lemma~\ref{lem8} we guarantee (\ref{eqa}) by finding the linear combination of \eqref{eq171} with different parameters, such that the resulting subtrahend has the form
\begin{equation}\label{eq178'}
  \Big|\bigcup_{z=1}^{x}\mathcal B_z\Big|+ \sum_{\ell=2}^x\eta_{\ell}'\alpha_{\ell},
\end{equation}
where $\eta'_{\ell}\ge \eta_{\ell}$ for each $2\le \ell\le x$.
In particular, the term $\big|\bigcup_{z=1}^{x}\mathcal B_z\big|$ is always ``covered'' by the subtrahend in (\ref{eq171}) with $q=1$, taken with coefficient $1$.
For each member of the linear combination we use the following notation: $\bigl[$parameters substituted in (\ref{eq171}); the  coefficient$\bigr]$.\vskip+0.1cm

 Since the proofs of (i)--(iv) are almost identical, we present the proofs of (i) and (iv) only. We start with (i). We sum up $\bigl[q=1$; coefficient 1$\bigr]$ with $\bigl[q=2$, $p_1=1$; coefficient $\frac 12\bigr]$. We note that for the latter member of the linear combination \eqref{eq171} gives
 $$\sum_{i=1}^{s}\bigl|\mathcal G(\{i\},s)\bigr|\le s{n-s\choose k-1}-2\Bigl|\mathcal B_1\cap \mathcal B_2\Bigr|.$$
 Returning to the linear combination, we get the inequality
\begin{equation*}\frac 32\Bigl|\bigcup_{i=1}^{s}\mathcal G(\{i\},s)\Bigr|\le \frac 32s{n-s\choose k-1}-\bigl|\mathcal B_1\cup\mathcal B_2\bigr|-\bigl|\mathcal B_1\cap\mathcal B_2\bigr| = \frac 32s{n-s\choose k-1}-|\mathcal B_1|-|\mathcal B_2|.\end{equation*}
The condition on $u$, imposed by the application of (\ref{eq171}), is satisfied for $u\ge 3$. It is clear that $\gamma =\frac 2{3/2}=\frac 43$ for this linear combination. This concludes the proof of (i).\vskip+0.1cm

The proof of (iv) is more cumbersome. It is sufficient to verify (iv) for $x=6$.
Take the following linear combination: $\bigl[q=1$; coefficient $1\bigr]$, $\bigl[q=2,\ p_1=3$; coefficient $\frac 32\bigr]$, $\bigl[q=3,\ p_1=2,\ p_2=4$; coefficient $\frac 13\bigl]$, and $\bigl[q=6,\ p_i=i$ for $i=0,\ldots,6$; coefficient $\frac 16\bigr]$. First, it is clear that for this combination we have $\beta=1+\frac 32+\frac 13+\frac 16= 3$, and thus $\gamma = x/\beta = 2$. Moreover, it is easy to see that the condition on $u$, imposed by the applications of (\ref{eq171}), is $u\ge 25$ and comes from the third summand.  Therefore, we are left to verify that the corresponding $\eta'_{\ell}$ satisfy $\eta'_{\ell}\ge \eta_{\ell}$ for each $2\le \ell \le 6$.

For $x=6$, the value of $\eta_{\ell}$ is $15, 40,  45, 24, 5$ for $\ell=2,\ldots, 6,$ respectively. Next, we find the values of $\eta'_{\ell}$. As it is not difficult to check, the subtrahends in \eqref{eq171} with $q=2,p_1=3$, $q=3,p_1=2,p_2=4$ and $q=6, p_i=i$ for $i=0,\ldots,6$, respectively, have the form
\begin{alignat}{3}
  \label{eqstupid1}(q=2)\ \ \ \  & 18\alpha_{2}+&&36 \alpha_3+ 30\alpha_4+12\alpha_5+&&2\alpha_6;\\
  \label{eqstupid2}(q=3)\ \ \ \ &  &&24 \alpha_3+ 36\alpha_4+18\alpha_5+&&3\alpha_6;\\
  \label{eqstupid3}(q=6) \ \ \ \ & && &&6\alpha_6.
\end{alignat}
Let us elaborate on how do we obtain \eqref{eqstupid1}. For $2\le \ell\le  6,$ put $\mathcal C_{\ell}:=\big\{C\in {[6]\choose \ell}: |C\cap [3]|\ge 1,|C\cap [4,6]|\ge 1\big\}$ and $\mathcal C:=\bigcup_{\ell=2}^6\mathcal C_{\ell}$.   One can easily check that $|C_{\ell}|$ equals $9,18,15,6,1$ for $\ell=2,\ldots, 6,$ respectively. The subtrahend in \eqref{eq171} has the form
\begin{equation*}2|(\mathcal B_1\cup \mathcal B_2\cup \mathcal B_3)\cap (\mathcal B_4\cup \mathcal B_5\cup \mathcal B_6)|=2\sum_{C\in\mathcal C}\Big|\overline{\bigcap_{i\in C}}\mathcal B_i\Big|=
18\alpha_2+36\alpha_3+30\alpha_4+12\alpha_5+2\alpha_6.\end{equation*}
Summing up \eqref{eqstupid1}, \eqref{eqstupid2}, \eqref{eqstupid3} with coefficients $\frac 32,\ \frac 13,\ \frac 16$, respectively, we get the expression $27\alpha_2+62\alpha_3+57\alpha_4+24\alpha_5+5\alpha_6,$ which is bigger than $\sum_{\ell=2}^{6}\eta_{\ell}\alpha_{\ell}$, as claimed. The proof of (iv) is complete. (We note that the choice of the coefficients, and thus the bound on $\gamma$, is clearly not optimal. It is however sufficient for our purposes.)\vskip+0.1cm

The proof of (v) is the most technical. Since we can always replace $x$ by any smaller positive integer, we assume for simplicity that $x = 2^r$ for some positive integer $r$, at the expense of factor $2$ that goes into the $\Omega$-notation. Consider the following linear combination:
$$M_j:=\bigl[q(j) = 2^j,\ p_i(j) = ix/q(j)\text{ for }i=0,\ldots, q(j);\text{ coefficient }4(j+2)\bigr],\text{ where }j = 0,\ldots, r.$$

First we verify that the linear combination above has enough $\ell$-intersections for each $\ell$. The union of all $\mathcal B_z$ is given by $M_0$. For larger $\ell$ we need to do an auxiliary calculation.

Let us determine, how many different $\ell$-intersections are contained in the family \begin{equation}\label{eq183}\bigcap_{i=0}^{q(j)-1}\Bigl( \bigcup_{z=p_i(j)+1}^{p_{i+1}(j)}\mathcal B_z\Bigr).\end{equation}
For $\ell\ge q(j)$, the $\ell$-subsets of $[x]$ corresponding to the $\ell$-intersections contained in the family above form the family $\mathcal C_{\ell}(j):=\big\{C\subset{[x]\choose \ell}: C\cap [p_i(j)+1,p_{i+1}(j)]\ne \emptyset\text{ for all }i=0,\ldots, q(j)-1\big\}$ (cf. the considerations after \eqref{eqstupid3}).
 We can bound the number of such $\ell$-intersections from below by ${x\choose \ell}\Big(1-q(j)\bigl(\frac{q(j)-1}{q(j)}\bigr)^{\ell}\Big)\ge {x\choose \ell}\big(1-q(j)e^{-\ell/q(j)}\big)$. For $\ell\ge q(j)\log_2 (2q(j))$ this is at more than $\frac 12{x\choose \ell}$. We conclude that the family (\ref{eq183}) contains more than $\frac 12{n\choose \ell}$ different $\ell$-intersections for $\ell\ge q(j)\log_2 (2q(j)) = 2^j(j+1)$.

Since the subtrahend in $M_j$ is the size of the family (\ref{eq183}) multiplied by the factor $2^{j+2}(j+2)$, we conclude that $M_j$ contributes more than $2^{j+1}(j+2){n\choose l}$ to  $\eta'_{\ell}$ for  $\ell\ge 2^j(j+1)$  (see \eqref{eq178'} for the definition of $\eta'_{\ell}$).
Next, for each $\ell\ge 2$, find the largest integer $j$ such that  $2^j(j+1)\le \ell$.
It is clear that $2^{j+1}(j+2)> \ell$. As we have shown above, $M_j$ contributes at least $2^{j+1}(j+2){n\choose l}>\ell{n\choose \ell}>\eta_{\ell}$ to the coefficient in front of $\alpha_l$. Thus, $\gamma_{\ell}'\ge \gamma_{\ell}$. One may also note that for $\ell=2,3$ the largest $j$  as defined above is $0$, and then the situation is slightly different since we have used the term $M_0$ to ``cover'' the union in \eqref{eq178'}. However, the ``unused'' coefficient for $M_0$ is $7$, and it is straightforward to see that $M_0$  contributes $7{n\choose \ell}$ to $\eta'_{\ell}$, which is clearly sufficient for $\ell=2,3$.

Second, we calculate the sum $\beta$ of the coefficients of $M_j$. We have
$$\beta =\sum_{i=0}^{r} 4(i+2) \le 4{r+3\choose 2}=  O(\log^2 x).$$
Thus, $\gamma = x/\beta = \Omega(x/\log^2 x)$.

Finally, we verify that the condition imposed on $u$ is sufficient for  the applications of (\ref{eq171}) we used. For $M_j$ the restriction is satisfied for $u\ge 2^j\bigl(2^{r-j}\bigr)^{2^j}+2^j=2^{j+(r-j)2^j}+2^j.$ This expression is clearly maximized when $j=r-1$, and in that case the inequality is $u\ge 2^{r-1+2^{r-1}}+2^{r-1}$. The latter expression is smaller than $2^{2^r}$ for any $r\ge 1$. Thus, the condition $u\ge 2^{x}\ge 2^{2^r}$ is sufficient.
\end{proof}

In the case $\nu(\mathcal G(\emptyset, s)) = 1$ we need a proposition which is more fine-grained than Proposition \ref{prop19}.
For each $j \in [k+1]$ put $D_j := [s+1,s+k+1]\setminus\{s+j\}$ and define the families $\mathcal C_j:={E_j\choose k-1}$, where $E_j:=[s+1,\ldots,n]\setminus D_j$ and $|E_j|\ge (u+s)(k-1)+1-k=(u+s-1)(k-1)$.

\begin{prop}\label{prop10} Assume that $\nu(\mathcal G(\emptyset,s)) = 1$ and put $v := \max\{1, k+2-u\}$.\begin{itemize}
\item[(i)] If $\tau(\mathcal G(\emptyset,s))>1$, then
\begin{equation}\label{eq126}\sum_{i=1}^{s}\bigl|\mathcal G(\{i\},s)\bigr|\le s{n-s\choose k-1}-\Big|\bigcup_{j=v}^{k+1}\mathcal C_j\Big|.\end{equation}
\item[(ii)] If $\tau(\mathcal G(\emptyset,s))=1$ and for some integer $t\in [v,k]$ we have $|\mathcal G(\emptyset,s)|>{n-s-t\choose k-t}$, then
\begin{equation}\label{eq127}\sum_{i=1}^{s}\bigl|\mathcal G(\{i\},s)\bigr|\le s{n-s\choose k-1}-\Big|\bigcup_{j=t}^{k+1}\mathcal C_j\Big|.\end{equation}\end{itemize}
\end{prop}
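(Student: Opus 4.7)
My plan is to identify $k$-sets $D_j \in \mathcal G(\emptyset, s)$ for each $j$ in the requested range and use each as an anchor for a single-step application of Lemma~\ref{lem61}, in the spirit of Proposition~\ref{prop19} with $q = 1$. The twist, compared with Proposition~\ref{prop19}, is that the $D_j$'s are not pairwise disjoint (they all lie inside $[s+1, s+k+1]$), so there is no direct multi-anchor bound; instead the per-anchor inequalities must be combined through a careful double counting to recover $|\bigcup_j \mathcal C_j|$ in the saving term.

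First I establish $D_j \in \mathcal G(\emptyset, s)$ for each $j$ in the required range using shiftedness of $\mathcal G(\emptyset, s)$ viewed on $[s+1, n]$. For part~(i), $\tau(\mathcal G(\emptyset, s)) > 1$ furnishes some $G \in \mathcal G(\emptyset, s)$ with $s + 1 \notin G$; successive shifts first compress $G$ to $D_1 = [s+2, s+k+1]$, and then the shifts $s + k + 1 \mapsto s + j$ deliver every $D_j$ for $j \in [1, k+1]$. For part~(ii) every $G \in \mathcal G(\emptyset, s)$ contains $s+1$, and exactly $\binom{n-s-t}{k-t}$ such sets also contain $[s+1, s+t]$; hence the hypothesis $|\mathcal G(\emptyset, s)| > \binom{n-s-t}{k-t}$ provides some $G$ missing an element of $[s+2, s+t]$, and shifting then supplies $D_j$ for every $j \in [t, k+1]$.

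For each admissible $j$ I apply Lemma~\ref{lem61} to the $s$ nested, cross-dependent families $\mathcal G(\{i\}, s) \cap \mathcal C_j$ on the ground set $E_j$ of size exactly $(u + s - 1)(k-1)$: any $s$-matching among them combined with $D_j$ would form a forbidden $(s+1)$-matching in $\mathcal G$. With $s' = s-1$ and $u' = u$ the lemma gives
\[
\sum_{i=1}^{s-1}|\mathcal G(\{i\}, s) \cap \mathcal C_j| + u\,|\mathcal G(\{s\}, s) \cap \mathcal C_j| \le (s-1)|\mathcal C_j|.
\]
Summing over $j$ in the admissible index set $J$ and invoking the double-counting identity
\[
\sum_{j \in J}|\mathcal A \cap \mathcal C_j| = |\mathcal A \cap \mathcal U| + (|J|-1)\,|\mathcal A \cap \mathcal U_0|, \qquad \mathcal A = \mathcal G(\{i\}, s),
\]
with $\mathcal U = \bigcup_{j \in J}\mathcal C_j$ and $\mathcal U_0 = \binom{[s+k+2,n]}{k-1} = \bigcap_{j \in J}\mathcal C_j$, converts everything into a bound on $\sum_i |\mathcal G(\{i\}, s) \cap \mathcal U|$ up to a ``$\mathcal U_0$-correction'' of weight $|J|-1$. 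A second application of Lemma~\ref{lem61} to $\mathcal G(\{i\}, s) \cap \mathcal U_0$ on $[s+k+2, n]$ (cross-dependent because any $s$-matching there combined with any $D_j$ gives an $(s+1)$-matching) controls this correction; the definition $v = \max\{1, k+2-u\}$ is precisely the threshold at which the surplus factor $u$ in front of $|\mathcal G(\{s\}, s) \cap \mathcal C_j|$ absorbs the overcounting factor $|J|-1 = k+1-v$. Observation~\ref{obs3} with $S = [s]$ and $\mathcal B = \mathcal U$ then upgrades the resulting intersection bound to the desired inequality $\sum_i |\mathcal G(\{i\}, s)| \le s\binom{n-s}{k-1} - |\mathcal U|$.

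The main obstacle will be the combinatorial aggregation just described: balancing the $\mathcal U_0$-overcount of weight $|J|-1$ against the surplus $(u-1)|\mathcal G(\{s\}, s) \cap \mathcal C_j|$ coming from each per-anchor bound is exactly what fixes the numerical threshold encoded in $v$ (equivalently, in $u$ relative to $k$). Case~(ii) follows the same template once the restricted anchor range $[t, k+1]$ has been identified in the first step; one merely replaces $|J| = k+2-v$ by $k+2-t$ throughout, and the hypothesis $|\mathcal G(\emptyset, s)| > \binom{n-s-t}{k-t}$ is exactly the right amount of information to start the shifting argument at $D_t$.
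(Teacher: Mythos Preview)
Your first step — locating the anchors $D_j$ in $\mathcal G(\emptyset,s)$ via shiftedness — is correct and matches the paper. The per-anchor application of Lemma~\ref{lem61} and the double-counting identity $\sum_{j\in J}|\mathcal A\cap\mathcal C_j|=|\mathcal A\cap\mathcal U|+(|J|-1)|\mathcal A\cap\mathcal U_0|$ are also fine.

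The gap is in the aggregation. After summing the per-anchor inequalities you obtain
\[
\sum_{i=1}^{s-1}g_i+ug_s\;\le\;(s-1)|\mathcal U|+(|J|-1)\Bigl[(s-1)|\mathcal U_0|-\sum_{i=1}^{s-1}g_i^0-u g_s^0\Bigr],
\]
with $g_i=|\mathcal G(\{i\},s)\cap\mathcal U|$ and $g_i^0=|\mathcal G(\{i\},s)\cap\mathcal U_0|$. To reach $\sum_i g_i\le (s-1)|\mathcal U|$ you must show the bracketed ``correction'' is at most $\frac{u-1}{|J|-1}\,g_s$. But a second application of Lemma~\ref{lem61} to the families $\mathcal G(\{i\},s)\cap\mathcal U_0$ gives an \emph{upper} bound on $\sum_{i=1}^{s-1}g_i^0+u'g_s^0$, hence only a \emph{lower} bound on the bracket — the wrong direction. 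Concretely, take $\mathcal G(\{s\},s)\cap\mathcal U=\emptyset$ and all $g_i^0=0$; the per-anchor inequalities are trivially satisfied, yet your summed bound degenerates to $\sum_{i=1}^{s-1}g_i\le (s-1)|\mathcal U|+(|J|-1)(s-1)|\mathcal U_0|$, and no invocation of Lemma~\ref{lem61} on $\mathcal U_0$ can remove the spurious term. The condition $|J|\le u$ that you correctly identify is necessary but not, by itself, sufficient to close the argument along this route.

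The paper avoids the overcount altogether by a pigeonhole step instead of a sum: setting $y:=|\mathcal G(\{s\},s)\cap\mathcal U|$, one finds a single index $j^*\in J$ with $|\mathcal G(\{s\},s)\cap\mathcal C_{j^*}|\ge y/|J|\ge y/u$, applies Lemma~\ref{lem61} once for this $j^*$ to get $\sum_{i=1}^{s-1}|\mathcal G(\{i\},s)\cap\mathcal C_{j^*}|\le (s-1)|\mathcal C_{j^*}|-y$, uses Observation~\ref{obs3} with $S=[s-1]$ and $\mathcal B=\mathcal C_{j^*}$, and finally adds the trivial bound $|\mathcal G(\{s\},s)|\le \binom{n-s}{k-1}-|\mathcal U|+y$. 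The role of $v$ (equivalently, $|J|\le u$) is precisely to guarantee $y/|J|\ge y/u$ in the pigeonhole step — not to absorb a summed overcount.
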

\begin{proof} (i)\ \ Since $\mathcal G(\emptyset,s)$ is shifted and $\tau(\mathcal G(\emptyset,s))>1$, the set $D_{1}$ is contained in $\mathcal G(\emptyset,s)$. Then, by shiftedness, for each $j\in[k+1]$ $D_j$ is contained in $\mathcal G(\emptyset,s)$. Arguing as in the proof of Proposition \ref{prop19}, let $\bigl|\bigl(\bigcup_{j=v}^{k+1} \mathcal C_j\bigr)\cap \mathcal G(\{s\},s)\bigr| =y$. Then there is an index $j\in[v, k+1]$ such that $\bigl|\mathcal C_j\cap \mathcal G(\{s\},s)\bigr| \ge \frac y{k+2-v}\ge \frac yu.$ The rest of the argument is very similar to the argument in Proposition~\ref{prop19}, which we reproduce for completeness.

Put $\mathcal G_j(\{i\},s):= \mathcal G(\{i\},s)\cap \mathcal C_j.$ The families $\mathcal G_j(\{1\},s),\ldots, \mathcal G_j(\{s\},s)\subset{E_j\choose k-1}$ are cross-dependent and nested.  Recall that $|E_j|\ge(u+s-1)(k-1)$. From (\ref{eq112}) we get
\begin{equation*}|\mathcal G_j(\{1\},s)|+\ldots +|\mathcal G_j(\{s-1\},s)|+u|\mathcal G_j(\{s\},s)|\le (s-1){|E_j|\choose k-1},\end{equation*}
which, in view of $|\mathcal G_j(\{s\},s)|\ge \frac yu$, gives us
\begin{equation*}\Bigl|\Bigl(\bigcup_{j=v}^{k+1} \mathcal C_j\Bigr)\cap \mathcal G(\{s\},s)\Bigr|+\sum_{i=1}^{s-1}|\mathcal G_j(\{i\},s)|\le (s-1){|E_j|\choose k-1}.\end{equation*}
Applying Observation \ref{obs3} with $S = [1,s-1]$ and $\bb = \bigcup_{j=v}^{k+1}{E_j\choose k-1}$, we  get (i).

\vskip+0.1cm

(ii) \ \ Similarly, since $\mathcal G(\emptyset,s)$ is shifted and $|\mathcal G(\emptyset,s)|>{n-s-t\choose k-t}$, the set $D_t$ must be contained in $\mathcal G(\emptyset,s)$. Therefore, each $D_j$ for $j\in [t,s+1]$ is contained in $\mathcal G(\emptyset,s)$, and we conclude as before.
\end{proof}

%The rest of the proof follows very closely the proof from \cite{F4}. Thus, we will state some results without proof, referring the reader to that paper.
We go on to the proof of Theorem \ref{thmhil}.
In the case  $|\mathcal G(\emptyset,s)|=1$ we get exactly the bound stated in the theorem since $|\mathcal G| = |\mathcal G|-|\mathcal G(\emptyset,s)|+1 \le {n\choose k}-{n-s\choose k}-|\mathcal B_{1}|+1$. Thus, for the rest of the proof we assume that $|\mathcal G(\emptyset,s)|>1$.\\

%We are left to bound $C$ from Lemma \ref{lem62} from below based on the results of Proposition \ref{prop10} and Lemma \ref{lem8}. However,  we need to treat some exceptional cases when $|\mathcal G(\emptyset,s)|$ is small.

\textbf{Case 1}. $\bm{\nu(\mathcal G(\emptyset,s))=\tau(\mathcal G(\emptyset,s))=1}$. If $1<|\mathcal G(\emptyset,s)|\le {n-s-k+1\choose 1} =n-s-k+1$ then by Proposition~\ref{prop10}, (ii) (with $t=k$) we have  $C\ge |\mathcal C_k\cup\mathcal C_{k+1}| = {n-k-s\choose k-1}+{n-k-s-1\choose k-2}$ for $C$ from Lemma \ref{lem62}. For $k\ge 4$ and $s\ge 2$ we have $${n-s-k-1\choose k-2}\ge n-s-k\ge|\mathcal G(\emptyset,s)|-1,$$ thus the theorem holds in this case. For $k=3$ one can see that $C\ge {n-s-2\choose 2}$ if $|\mathcal G(\emptyset,s)|\ge 4$,  and thus we have $C-{n-s-3\choose 2}=n-s-3\ge \mathcal G(\emptyset,s)-1$.

%We only note that for $k=2$ one has to use the fact that $\mathcal G(\{s\},s+1)\supset\mathcal G(\{s+1\},s+1)$.

If $|\mathcal G(\emptyset,s)|>n-s-k+1$ then we use the following bound:
\begin{align} C\ge \Big|\bigcup_{j=k-1}^{k+1}\mathcal C_j\Big| = &{n-k-s\choose k-1}+2{n-k-s-1\choose k-2} = \notag \\ \label{eq131}= &\Bigl(1+\frac{2(k-1)}{n-k-s}\Bigr){n-k-s\choose k-1} \overset{(\ref{eq184})}\ge   \Bigl(1+\frac{2}{u+s}\Bigr){n-k-s\choose k-1}.\end{align}
The last expression is at least $\frac{u}{u-1}{n-k-s\choose k-1}$ if $\frac{u+s+2}{u+s}\ge\frac u{u-1}$, which holds for $u\ge s+2$. Since $u\ge s+2$, we can apply (\ref{eq129}) and conclude that Theorem \ref{thmhil} holds in this case.\\

\textbf{Case 2.} $\bm{\nu(\mathcal G(\emptyset,s))=1<\tau(\mathcal G(\emptyset,s))}$. Analogously to  (\ref{eq131}), Proposition \ref{prop10} implies that
$$C\ge \Big|\bigcup_{j=z}^{k+1}\mathcal C_j\Big|=\Bigl(1+\frac{\min\{k+1,u\}}{u+s-1}\Bigr){n-k-s\choose k-1}.$$
 The inequality $\frac{u+s+\min\{k+1,u\}}{u+s}\ge\frac u{u-2}$ holds for $u\ge s+4$ and $k\ge 3$, and we can apply (\ref{eq128}). \\

\textbf{Case 3.} $\bm{\nu(\mathcal G(\emptyset,s))=x\ge 2}$. We make use of Lemma \ref{lem8}. We are done in this case as long as, in terms of Lemma \ref{lem8},
\begin{equation}\label{eq180}\gamma\cdot\frac{u-x-1}u\ge 1.\end{equation}
Using Lemma \ref{lem8} (i)--(iv), one can see that \eqref{eq180} holds provided $u\ge \max\{25, 2s+2\}$. Indeed, let us verify this technical claim. It is clearly sufficient to verify it for $u=\max\{25, 2s+2\}$.  \begin{itemize}
\item If $x=2,$ then $\gamma = \frac 43$ and the left hand side of (\ref{eq180}) is at least $\frac 43\cdot \frac{22}{25}>1$.
\item If $x =3$, then the LHS is at least $\frac 32\cdot\frac{21}{25}> 1$.
\item If $x=4$, then the LHS is at least $\frac {16}9\cdot\frac{20}{25}>1.$
\item If $x=5$, then the LHS is at least $\frac {16}9\cdot\frac{19}{25}>1.$
\item If $x\ge 6$, then, using $s\ge x$, the LHS is at least $2\cdot \frac{x+1}{2x+2}=1.$\end{itemize}
To conclude this case, we remark that the inequalities $u\ge \max\{25,2s+2\},\ n\ge (s+u)(k-1)+s+1$ and $k\ge 3$ are sufficient for all the considerations above to work.\\

Next, using the fifth statement from Lemma \ref{lem8}, we obtain that (\ref{eq180}) is satisfied for $u = s+o(s)$. Indeed, take sufficiently large $s$ and put $u = s+\delta \frac{s (\log\log s)^2}{\log s}$ with some $\delta>0$ that will be determined later. If  $2x+25 \le s$, then the argument given in the previous paragraph shows that the condition $u\ge s$ is sufficient. Thus, we may assume that $x \ge (s-25)/2\ge \log_2 s$, where the second inequality holds  for all sufficiently large $s$. Then, applying the fifth point of Lemma \ref{lem8} with $x = \log_2 s$ (note that $u> s= 2^{x}$, and thus the condition of Lemma~\ref{lem8} are satisfied), we get
$$\gamma\cdot\frac{u-x-1}u \ge \Omega\Bigl(\frac {\log s}{(\log\log s)^2}\Bigr)\cdot \frac{u-s-1}u =  \Omega\Bigl(\frac {\log s}{(\log\log s)^2}\Bigr)\cdot \frac{\delta \frac{s (\log\log s)^2}{\log s}}s>1,$$
if $\delta$ is sufficiently large.
%To conclude the proof of the theorem, we  note that in the case $x=1$ the conditions $u\ge s+3$ and $k\ge 3$ were shown to be sufficient.
%Thus, Theorem \ref{thmhil} holds for $n\ge (2s+o(s))k$ and $k\ge 3$ for shifted families.
The proof of Theorem \ref{thmhil} for shifted families is complete.

%%%%%%%%%%%%%%%%%%%%%%%%%%%%%%%%%%%%%%%%%%%%%%%%%%%%%%%%%%%%%%%%%%%%%%%%%%%%%%%%%%%%%%%%%%%%%%%%%%%%%%

\subsection{The case of not shifted $\mathcal G$}
Consider a family $\mathcal G$ satisfying the requirements of the theorem. Since the property $\tau(\mathcal G)>s$ is not necessarily maintained by $(i,j)$-shifts, we cannot assume that the family $\mathcal G$ is shifted right away. However,  each $(i,j)$-shift for $1\le i<j\le n$, decreases $\tau(\mathcal G)$ by at most 1.  Perform the $(i,j)$-shifts ($1\le i<j\le n$) one by one until either $\mathcal G$ becomes shifted or $\tau(\mathcal G) = s+1$. In the former case we fall into the situation of the previous subsection.

Now suppose that $\tau(\mathcal G) = s+1$ and that each set from $\mathcal G$ intersects $[s+1]$. Then each family $\mathcal G(\{i\},s+1)$ for $i\in[s+1]$ is nonempty. Make the family $\mathcal G$ shifted in coordinates $s+2,\ldots, n$ by performing all possible $(i,j)$-shifts with $s+2\le i<j\le n$. Denote the new family by $\mathcal G$ again. Since  shifts do not increase the matching number, we have $\nu(\mathcal G)\le s$ and $\tau(\mathcal G)\le s+1$. Each family $\mathcal G(\{i\},s+1)$ contains the set $[s+2,s+k]$.

Next, perform all possible shifts on coordinates $1,\ldots, s+1$ and denote the resulting family by $\mathcal G'$. We have $|\mathcal G'|=|\mathcal G|, \nu(\mathcal G')\le s$, and, most importantly, \begin{equation}\label{nonempty}\mathcal G'(\{1\},s+1), \ldots \mathcal G'(\{s+1\},s+1) \text{ are nested and non-empty.}\end{equation} These families are non-empty due to the fact that each of them contained the set $[s+2,s+k]$ before the shifts on $[s+1]$.

We can actually apply the proof from the previous subsection to $\mathcal G'$. Indeed, the main consequence of the shiftedness we were using is \eqref{nonempty}. The other consequence of the shiftedness was the bound (\ref{eq113}), which automatically holds in this case since every set from $\mathcal G'$ intersects $[s+1]$ and thus $\mathcal G'(\emptyset,s+1)$ is empty. The proof of Theorem \ref{thmhil} is complete.

%%%%%%%%%%%%%%%%%%%%%%%%%%%%%%%%%%%%%%%%%%%%%%%%%%%%%%%%%%%%%%%%%%%%%%%%%%%%%%%%%%%%%%%%%%%%%%%%%%%%%%%%%%%%%%%%%%%%%%%%%%%%%

\section{Proof of Theorems \ref{thm2} and \ref{thm7}}\label{sec8}

\subsubsection*{Proof of Theorem \ref{thm2}}
Take $s$ cross-dependent families $\ff_1,\ldots, \ff_s$.
For $s=2$ the bound (\ref{eqcross}) states that $|\mathcal \ff_1|+|\mathcal \ff_2|\le 2^{[n]}$, which follows from the following trivial observation: if $A\in \mathcal \ff_1$ then $[n]\setminus A\notin \ff_2$. Thus, we may assume that $s\ge 3$. Also, the case of $m=0$ is very easy to verify for any $s$, and so we assume that $m\ge 1$.

 Put $n := s(m+1)-\ell$ for the rest of this section. Recall that $\mathcal F$ is called an \textit{up-set} if for any $F\in \mathcal F$ all sets that contain $F$ are also in $\mathcal F$.
When dealing with cross-dependent and $q$-dependent families, we may restrict our attention to the families that are up-sets and shifted (e.g. see \cite[Claim 17]{FK7}), which we assume for the rest of the section.

Let us first treat the case $\ell=1$. This is the easiest case, and it will provide the reader with a good overview of the technique we use. Take $s$ pairwise disjoint sets $H_1,\ldots,H_s$ of size $m$ at random. To simplify notation, assume that the $s-1$ elements of $[n]\setminus\bigcup_{i=1}^sH_i$ form the set $[s-1]$.

 %For $S\subset [2s-l]$ define $H_i(S) := H_i\cup S.$

For each $i\in[s]$ let $\emptyset =: H_i^{(0)}\subsetneq\ldots\subsetneq H_i^{(m)}:=H_i$ be a randomly chosen full chain in $H_i.$
For all $i \in [s]$, $0\le j\le m$ and $S\subset [s-1]$ define the random variables $\beta_i^{(j)}$ and $\beta_i(S)$:
\begin{equation}\label{eq50v1}\beta_i^{(j)} =\begin{cases}1\ \ \text{if }\ H_i^{(j)}\in \ff_i,\\0\ \ \text{if }\ H_i^{(j)}\notin \ff_i;\end{cases} \ \ \ \ \ \ \ \beta_i(S) =\begin{cases}1\ \ \text{if }\ H_i\cup S\in \ff_i,\\0\ \ \text{if }\ H_i\cup S\notin \ff_i.\end{cases}\end{equation}
Note that $S$ may be the empty set and that $\beta^{(m)}_i=\beta_i(\emptyset)$. The cross-dependence of $\ff_1,\ldots, \ff_s$  implies
\begin{equation}\label{eq45}\beta_{1}(S_1)\beta_{2}(S_2)\cdot\ldots\cdot\beta_{s}(S_s)=0\ \ \ \ \text{whenever}\ \ \ S_1,\ldots,S_s\subset[s-1] \ \ \text{are pairwise disjoint.}\end{equation}
For all $0\le j\le m$ and $S\subset [s-1]$ the expectations  $\E[\beta_i^{(j)}]$ and $\E[\beta_i(S)]$ satisfy
\begin{equation}\label{eq46v1}\E[\beta_i^{(j)}]=\frac{\bigl|\ff_i\cap{[n]\choose j}\bigr|}{{n\choose j}},\ \ \ \ \ \E[\beta_i(S)]=\frac{\bigl|\ff_i\cap{[n]\choose m+|S|}\bigr|}{{n\choose m+|S|}}.\end{equation}

Our aim is to prove the following statement.
\begin{lem}\label{lem11v1} For every choice of $H_1,\ldots, H_s$ and the full chains one has
\begin{equation}\label{eq47v1}\sum_{i=1}^s\Biggl[\sum_{j=0}^{m}{n\choose j}\beta_i^{(j)}+\sum_{S\in {[s-1]\choose 1}\cup{[s-1]\choose 2}}\frac{{n\choose m+1}}{{s-1\choose |S|}}\beta_i(S)\Biggr]\le s{n\choose m+1}+s{n\choose m+2},
\end{equation}
where $\beta_i^{(j)}$ and $\beta_i(S)$ are as defined in \eqref{eq50v1}.
\end{lem}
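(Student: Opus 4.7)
Fix the random partition $[n] = H_1 \sqcup \cdots \sqcup H_s \sqcup [s-1]$ and the chains inside each $H_i$; the inequality is pointwise in this data. Since each $\ff_i$ is a shifted up-set, $\beta_i^{(j)}$ is non-decreasing in $j$, $\beta_i(S)$ is non-decreasing in $S$ (with respect to containment), and $\beta_i^{(m)} = \beta_i(\emptyset)$. Hence $\beta_i^{(j)} = 1$ iff $j \ge a_i$ for a threshold $a_i \in \{0, \ldots, m+1\}$, and $a_i \le m$ (i.e.\ $H_i \in \ff_i$) automatically forces $\beta_i(S) = 1$ for every $S \subset [s-1]$. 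Setting $\Sigma_m := \sum_{j=0}^m {n \choose j}$, $P := {n \choose m+1}$, $Q := {n \choose m+2}$, and $T := \{i : H_i \notin \ff_i\}$, each good $i \notin T$ contributes at most $\Sigma_m + 2P$ to the left-hand side, while each bad $i \in T$ contributes $A_i = 0$ and
$$B_i \;=\; \frac{P}{s-1}|\mathcal S_i^{(1)}| + \frac{P}{{s-1 \choose 2}}|\mathcal S_i^{(2)}| \;\le\; 2P,$$
where $\mathcal S_i^{(k)} := \{S \in {[s-1] \choose k} : \beta_i(S) = 1\}$ and $\mathcal S_i := \mathcal S_i^{(1)} \cup \mathcal S_i^{(2)}$ is up-closed among subsets of $[s-1]$ of size at most $2$.

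Cross-dependence applied to $(\emptyset, \ldots, \emptyset)$ yields $t := |T| \ge 1$; more generally, it forces the $\mathcal S_i$ for $i \in T$ to admit no system of pairwise disjoint (nonempty) representatives. My plan is to split by $t$. If $t = 1$, cross-dependence with $S_j = \emptyset$ for every good $j$ forces $\mathcal S_{i_0} = \emptyset$ for the unique bad $i_0$, hence $B_{i_0} = 0$, and the lemma reduces to $(s-1)(\Sigma_m + 2P) \le sP + sQ$, a routine binomial estimate from $n = s(m+1)-1$ using that ${n \choose m} = P/(s-1)$ dominates $\Sigma_m$. If $t = s$, the ``no SDR'' condition is automatic since $[s-1]$ cannot be split into $s$ nonempty disjoint subsets, but the good contribution vanishes, so it suffices to show $2sP \le sP + sQ$, i.e.\ $P \le Q$, which holds whenever $n \ge 2m+3$ (true for $s \ge 3$, $m \ge 1$). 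For the middle range $2 \le t \le s-1$ I decompose each $\mathcal S_i$ into its singletons $A_i \subset [s-1]$ and pairs, use that up-closure forces every pair meeting $A_i$ to lie in $\mathcal S_i^{(2)}$, and extract from ``no SDR'' a Hall-type obstruction on the $A_i$'s that produces a sharp enough bound on $\sum_{i \in T} B_i$.

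\textbf{Main obstacle.} The inequality is tight in this proof (attained at $(s,m) = (3,1)$ with $t = s$ and $\ff_i = \{F : |F| \ge m+1\}$ for all $i$), so the weights $P/(s-1)$ on singletons and $P/{s-1 \choose 2}$ on pairs are matched precisely to the structure of cross-intersecting up-closed families on $[s-1]$; already at $(s,m,t) = (3,1,2)$ the crude bound $\sum B_i \le 2tP$ breaks the inequality. Converting ``no SDR'' for $(\mathcal S_i)_{i \in T}$ into the correct arithmetic estimate on $\sum B_i$ in the middle range, together with the binomial algebra that produces $sP + sQ$ on the right-hand side, is the technical heart. I anticipate a further case split based on whether $\bigcap_{i \in T,\ A_i \ne \emptyset} A_i$ is empty: in the nonempty case every $S \in \bigcup_i \mathcal S_i$ must contain a common element, severely limiting $\sum B_i$; in the empty case a finer Hall-type analysis on the $A_i$'s is needed to complete the estimate.
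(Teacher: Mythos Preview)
Your overall framework---casing on $t=|T|$, the number of ``bad'' indices with $H_i\notin\ff_i$, and reading off contributions via the up-set monotonicity of $\beta_i^{(j)}$ and $\beta_i(S)$---is exactly the paper's approach (there parameterised by $p=s-t$). Your treatment of the extreme cases $t=1$ and $t=s$ is correct and corresponds to the paper's $p=s-1$ and $p=0$ cases.

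The genuine gap is the middle range $2\le t\le s-1$, which you leave as a plan (a ``Hall-type obstruction on the $A_i$'s'' with a further split on whether the singleton supports share a common point). The paper dispatches this range without any such machinery, by a one-line averaging argument that uses \emph{singletons only}. For the $t=s-p$ bad indices, any injection $i\mapsto k_i$ of the bad indices into $[s-1]$ yields $\sum_{i\ \mathrm{bad}}\beta_i(\{k_i\})\le t-1$, since otherwise the sets $H_i\cup\{k_i\}$ together with the $p$ good $H_j$'s would form a cross-matching. Averaging over all such injections (each pair $(i,k)$ appears in the same number of them) shows that at least $s-1$ of the $t(s-1)$ pairs $(i,k)$ satisfy $\beta_i(\{k\})=0$. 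Hence the total singleton contribution is at most $(s-1)\frac{P}{s-1}\cdot\bigl((s-1)\cdot s-(s-1)\bigr)/(s-1)= (s-1)P$, while the pair contribution is bounded trivially for every $i$. Plugging in, the inequality reduces to $P-p\sum_{j\le m}{n\choose j}\ge 0$, which follows from ${n\choose m+1}=(s-1){n\choose m}$ (a consequence of $n=s(m+1)-1$) together with a routine geometric tail estimate. No SDR/Hall analysis or case split on common elements is needed; your proposed route would work but is considerably more laborious than necessary.

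One side remark: the coefficient on $2$-element $S$ in the displayed inequality is evidently a typo---it should be ${n\choose m+2}/{s-1\choose 2}$, as both the expectation computation immediately after the lemma and the paper's own proof (which bounds the pair contribution by $s{n\choose m+2}$) make clear. Your $B_i\le 2P$ reads the typo literally; with the intended coefficient the bound is $P+Q$, the case $t=s$ becomes a trivial equality, and the binomial algebra for the middle range simplifies accordingly.
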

Replacing the left hand side of (\ref{eq47v1}) with its expected value, it is straightforward to see that (\ref{eq46v1}) implies $$\sum_{i=1}^s\sum_{j=0}^{m+2}\Bigl|\ff_i\cap{[n]\choose j}\Bigr|\le s{n\choose m+1}+s{n\choose m+2},$$
from which (\ref{eqcross}) follows.

\begin{proof}[Proof of Lemma~\ref{lem11v1}]
  If for all $i\in[s]$ we have $\beta_i^{(m)}=0$ then we are done. Assume that $\sum_{i=1}^s \beta_i^{(m)}=p$ for some $p\ge 1$. Clearly, $p\le s-1$ since $\ff_1,\ldots, \ff_s$ are cross-dependent. W.l.o.g., we assume that $\beta_i^{(m)}=1$ if and only if $i\in[p]$.

  If $p\le s-2$, then we proceed as follows. We have $\sum_{i=p+1}^s\beta_{i}(\{k_i\})\le s-p-1$ for any distinct $k_{p+1},\ldots, k_s\in [s-1]$. Averaging over the choice of $k_{p+1},\ldots, k_s$, it is easy to see that at least a $1/(s-p)$-fraction of the pairs $(i,k)$, where $p+1\le i\le s$ and $k\in [s-1]$, satisfies $\beta_i(\{k\})=0$. In other words, there are at least $s-1$ such pairs. Consequently, we get that the sum on the right hand side of \eqref{eq47v1} is at most $p\sum_{j=1}^{m}{n\choose j}+(s-1){n\choose m+1}+s{n\choose m+2}$ and the difference between the RHS and the LHS of \eqref{eq47v1} is at least $D:={n\choose m+1}-p\sum_{j=1}^m{n\choose j}$. It is easy to check that for $n=sm+s-1$ we have ${n\choose m+1}=(s-1){n\choose m}$, and thus $D\ge {n\choose m}-p\sum_{j=1}^{m-1}{n\choose j}$.\vskip+0.1cm

  For $n=sm+s-\ell$ with $\ell\in[s]$ and $m\ge 1$ the following holds.
\begin{equation}\label{eq53}\frac{{n\choose m-j-1}}{{n\choose m-j}} = \frac{m-j}{n-m+j+1}\le \frac{m-1}{(s-1)m} \ \ \ \ \ \ \  \text{for } \ \ \ \ j\ge 1.\end{equation}
Using (\ref{eq53}) we may obtain that $\sum_{j=1}^m p{n\choose m-j}$ is at most
\begin{multline}\label{eq55}\sum_{j=1}^m (s-2){n\choose m-j}\overset{\eqref{eq53}}{<} (s-2)\Big(1+\sum_{i=1}^{\infty}\frac{m-1}{m(s-1)^i}\Big){n\choose m-1} = \\(s-2)\Big(1+\frac{m-1}{m(s-2)}\Big){n\choose m-1}= \frac{\big(s-1-\frac 1m\big)m}{(s-1)m+s-\ell+1}{n\choose m}.\end{multline}
Using \eqref{eq55}, it is easy to conclude that $D>0$ in this case.

Assume now that $p=s-1$. Then $\beta_s(\{k\})=0$ for any $k\in[s-1]$. Then the LHS of \eqref{eq47v1} is at most $(s-1)\sum_{j=1}^{m+2}{n\choose j}$, and the difference $D$ between the RHS and the LHS of \eqref{eq47v1} satisfies $D\ge {n\choose m+2}+{n\choose m+1}-(s-1)\sum_{j=1}^{m}{n\choose j}$. Using the calculations from the previous case, we get $D\ge {n\choose m+2}-\sum_{j=0}^m{n\choose j}$. On the other hand, we have ${n\choose m+2}\ge {n\choose m+1}$ since $n=s(m+1)-1\ge 2m+3$ for $m\ge 1$ and $s\ge 3$. Thus, $D\ge {n\choose m+1}-\sum_{j=0}^m {n\choose j}> 0$.
\end{proof}

From now on, we assume that $\ell \ge 2$. We use the same idea, however, we will average over a slightly different collection of sets. Take $s$ pairwise disjoint sets $H_1,\ldots,H_s$ of size $m-1$ at random. W.l.o.g. assume that the $2s-\ell$ elements of $[n]\setminus\bigcup_{i=1}^sH_i$ form the set $[2s-\ell]$.

 %For $S\subset [2s-l]$ define $H_i(S) := H_i\cup S.$

For all $i \in [s]$, $0\le j\le m-1$ and $S\subset [2s-\ell]$, define the random variables $\beta_i^{(j)}$ and $\beta_i(S)$ analogously to how it is done in \eqref{eq50v1}. Note that the analogues of \eqref{eq45} and \eqref{eq46v1} hold in this case as well.

The analogue of Lemma~\ref{lem11v1} in this case is the following statement. Once verified, the rest of the argument is the same.
\begin{lem}\label{lem11} For every choice of $H_1,\ldots, H_s$ and the full chains one has
\begin{equation}\label{eq47}\sum_{i=1}^s\Biggl[\sum_{j=1}^{m-2}{n\choose j}\beta_i^{(j)}+\sum_{S\subsetneq[2s-\ell],|S|\le 3}\frac{{n\choose m-1+|S|}}{{2s-\ell\choose |S|}}\beta_i(S)\Biggr]\le s\sum_{j=m+1}^{m+2}{n\choose j}+(\ell-1){n\choose m}.
\end{equation}
\end{lem}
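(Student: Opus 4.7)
The plan is to mirror the proof of Lemma~\ref{lem11v1}, performing a case analysis on $p:=\sum_{i=1}^s\beta_i(\emptyset)=\#\{i\in[s]:H_i\in\ff_i\}$. Shiftedness and the up-set property of each $\ff_i$ give the two monotonicities $\beta_i(\emptyset)=0\Rightarrow \beta_i^{(j)}=0$ for every $j\le m-1$, and $\beta_i(\emptyset)=1\Rightarrow \beta_i(S)=1$ for every $S\subset[2s-\ell]$. Cross-dependence combined with pairwise disjointness of $H_1,\ldots,H_s$ forces $p\le s-1$; reorder so that $\beta_i(\emptyset)=1$ exactly when $i\in[p]$.

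When $p=s-1$, cross-dependence forces $\beta_s(S)=0$ for every $S\subset[2s-\ell]$, since any such $H_s\cup S$ would be disjoint from $H_1,\ldots,H_{s-1}$ and produce an $s$-matching. The LHS of \eqref{eq47} is then at most $(s-1)\sum_{j=1}^{m+2}{n\choose j}$, and \eqref{eq47} reduces to
$$(s-1)\sum_{j=1}^{m-1}{n\choose j}+(s-\ell){n\choose m}\le {n\choose m+1}+{n\choose m+2},$$
which I verify by the geometric-tail argument of \eqref{eq55} using the ratio bound \eqref{eq53}, exactly as in the $p=s-1$ finale of Lemma~\ref{lem11v1}.

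When $p\le s-2$, the $p$ saturated families contribute at most $p\sum_{j=1}^{m+2}{n\choose j}$ to the LHS. For $i>p$ set $A_i^{(t)}:=\{S\subset[2s-\ell]:|S|=t,\ H_i\cup S\in\ff_i\}$; cross-dependence applied to any pairwise disjoint $(s-p)$-tuple $(S_{p+1},\ldots,S_s)$ in $[2s-\ell]$ (combined with $H_1,\ldots,H_p\in\ff_i$) gives $\sum_{i>p}\beta_i(S_i)\le s-p-1$. Averaging this inequality over uniform pairwise disjoint tuples with various size profiles (all singletons; mixed singletons and pairs; mixed profiles involving triples) yields a system of linear inequalities in the three quantities $\sum_{i>p}|A_i^{(1)}|$, $\sum_{i>p}|A_i^{(2)}|$ and $\sum_{i>p}|A_i^{(3)}|$.

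The main technical obstacle is that the all-singleton averaging alone gives only the loose bound $\sum_{i>p}|A_i^{(1)}|\le (s-p-1)(2s-\ell)$, hence a singleton contribution of $(s-p-1){n\choose m}$, whereas the extremal family $\tilde\ff_i$ achieves only $(\ell-1-p){n\choose m}$. To close the gap I plan to take a positive combination of several averaging inequalities with weights dictated by tightness at $\tilde\ff_i$: for example, when $p=0$ the mixed profile using $\ell$ singletons and $s-\ell$ pairs fills $[2s-\ell]$ exactly and is tight on $\tilde\ff_i$, and combining the corresponding averaging bound with the all-singleton averaging recovers the correct singleton coefficient $(\ell-1){n\choose m}$. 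Finding such a uniformly valid combination -- with pair and triple coefficients matching $(s-p)({n\choose m+1}+{n\choose m+2})$ and any slack absorbed into $p\sum_{j=1}^{m+2}{n\choose j}$ -- is the central step; the remainder is routine binomial-ratio arithmetic via \eqref{eq53} and \eqref{eq55}.
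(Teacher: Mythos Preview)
Your overall plan and the case $p=s-1$ are correct and match the paper. The gap is in the case $p\le s-2$: you identify the right difficulty, but you do not actually produce the ``uniformly valid combination'' of averaging inequalities, and this is where all the content lies. In particular, your proposal to mix singleton, pair and triple profiles among the $s-p$ remaining indices requires that $[2s-\ell]$ has room for $s-p$ pairwise disjoint sets of the chosen sizes; when $p<\ell/2$ one cannot fit $s-p$ disjoint pairs (since $2(s-p)>2s-\ell$), so the all-pair averaging you need is simply unavailable, and mixed profiles alone do not obviously close the gap.

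The paper handles $p\le s-2$ by splitting at exactly this threshold. If $\ell/2\le p\le s-2$, then $2(s-p)\le 2s-\ell$, so all-pair averaging for $i>p$ gives at least ${2s-\ell\choose 2}$ zeros among the $\beta_i(S)$ with $|S|=2$, and all-singleton averaging (over all $i\in[s]$) gives at least $2s-\ell$ zeros among the $|S|=1$ terms; the triple terms take the trivial bound $s{n\choose m+2}$ and no triple profile is ever used. If $p<\ell/2$, the paper does not look for a linear combination at all: it applies an auxiliary two-layer result (Proposition~\ref{prop2}, essentially the $m=1$ instance of the theorem) to the cross-dependent families $\ff'_i=\{S\subset[2s-\ell]:|S|\le 2,\ \beta_i(S)=1\}$ for $i>p$, obtaining a joint bound on $\sum_{i>p}(|A_i^{(1)}|+|A_i^{(2)}|)$. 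One then needs Observation~\ref{obs11} (that the weight on $|S|=2$ in~\eqref{eq47} dominates the weight on $|S|=1$) to convert this joint bound into the required inequality. So the missing idea in your proposal is precisely this split at $p=\ell/2$ together with the recursive use of the $m=1$ base case; without it the ``central step'' you flag remains open.
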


The proof of Lemma~\ref{lem11} uses the following proposition.
\begin{prop}\label{prop2} For $n' := 2s'-\ell'$ with $\ell\in [s']$ and for $s'$ cross-dependent families $\ff'_1,\ldots,\ff'_{s'}\subset {[n']\choose 1}\cup {[n']\choose 2}$ we have
\begin{equation}\label{eq49}\sum_{i=1}^{s'}|\ff'_i|\le (\ell'-1)n'+s'{n'\choose 2}.\end{equation}\end{prop}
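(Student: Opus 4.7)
My plan is to reduce the statement to a bipartite matching estimate via two standard compressions, and then carry out a case analysis driven by a maximum matching. First I would apply the shifts $S_{p,q}$ to assume each $\ff'_i$ is shifted (this preserves cross-dependence and the sum). Second, I would perform a local upward-closure step: if $\{x\}\in\ff'_i$ but $\{x,y\}\notin\ff'_i$ for some $y$, then adding $\{x,y\}$ to $\ff'_i$ preserves cross-dependence, because any cross-transversal using the new doubleton could be replaced by one using the smaller set $\{x\}$. Thus I may assume that each $\ff'_i$ contains every doubleton meeting $T_i:=\{x:\{x\}\in\ff'_i\}$, and by shifting $T_i=[t_i]$ for some $t_i\ge 0$.

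After these reductions I can write $|\ff'_i|=t_i+\binom{n'}{2}-m_i$, where $m_i\ge 0$ counts the missing doubletons of $\ff'_i$, all of which lie inside $\binom{[t_i+1,n']}{2}$. The desired bound (\ref{eq49}) becomes $\sum_{i=1}^{s'}(t_i-m_i)\le(\ell'-1)n'$. I would then form the bipartite graph $H$ on $[s']\sqcup[n']$ with edges $(i,x)$ for $x\in T_i$, so $|E(H)|=\sum_i t_i$, and split according to the matching number $\nu:=\nu(H)$.

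In the easy case $\nu<\ell'$, K\"onig's theorem supplies a vertex cover $A\cup B$ with $A\subseteq[s']$, $B\subseteq[n']$ and $|A|+|B|\le\ell'-1$. For every $i\notin A$ one has $T_i\subseteq B$, so $\sum_i t_i\le |A|n'+(s'-|A|)|B|$; a short calculation using $n'\ge s'$ shows this expression is maximized at $|A|=\ell'-1$, $|B|=0$, yielding $\sum_i t_i\le(\ell'-1)n'$. Together with $m_i\ge 0$ this closes the case. In the remaining case $\nu\ge\ell'$, I would fix a matching $\{(i_j,x_j):j\in[\ell']\}$ in $H$; picking the singletons $\{x_j\}\in\ff'_{i_j}$ leaves $s'-\ell'$ families on the vertex set $V=[n']\setminus\{x_j\}$ of size $2(s'-\ell')$, and cross-dependence forbids any rainbow perfect matching among them. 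My plan is to aggregate the resulting obstructions, summed over a suitably chosen family of $\ell'$-matchings in $H$, to derive $\sum_i m_i\ge\sum_i t_i-(\ell'-1)n'$.

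The hard part will be the quantitative step in the second case: extracting from the non-existence of a rainbow perfect matching the \emph{exact} lower bound on missing doubletons that compensates for the singleton surplus. Matching the precise constant $(\ell'-1)n'$ will likely require a carefully weighted averaging over the matchings of $H$, or an induction on $s'$ in which the easy case $\nu<\ell'$ serves as the base.
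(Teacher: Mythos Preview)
Your reductions (shifting and the upward-closure step) are valid, and the K\"onig case $\nu(H)<\ell'$ is handled correctly. However, the case $\nu(H)\ge\ell'$ is left genuinely open: you only sketch a plan (``aggregate the resulting obstructions, summed over a suitably chosen family of $\ell'$-matchings in $H$'') without indicating how the weights are chosen or why the resulting lower bound on $\sum_i m_i$ matches the surplus $\sum_i t_i-(\ell'-1)n'$ exactly. In particular, fixing a single $\ell'$-matching leaves $s'-\ell'$ families on $2(s'-\ell')$ vertices, and the obstruction ``no rainbow perfect matching'' is a statement about both singletons and doubletons in those remaining families, not purely about missing doubletons; translating it into a clean count of the $m_i$ looks at least as hard as the original proposition. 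The alternative you mention, induction on $s'$ with the K\"onig case as base, is not a base case at all: whether $\nu(H)<\ell'$ holds depends on the specific families, not on $s'$.

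The paper takes a different and considerably shorter route that sidesteps K\"onig and the bipartite graph entirely. One fixes a uniformly random ordering of $[n']$ and proves a \emph{pointwise} inequality: with $\vartheta_i(S)$ the indicator of $S\in\ff'_i$, one shows that for every ordering
\[
\sum_{i=1}^{s'}\Bigl[\,n'\vartheta_i(\{i\})+\sum_{x=s'+1}^{n'}\tfrac{\binom{n'}{2}}{n'-s'}\,\vartheta_i(\{i,x\})\Bigr]\le (\ell'-1)n'+s'\binom{n'}{2},
\]
and then takes expectations to recover \eqref{eq49}. This pointwise bound also splits according to $p:=\#\{i:\vartheta_i(\{i\})=1\}$, but now the case $p\ge\ell'$ is immediate: cross-dependence applied to the singletons $\{1\},\dots,\{p\}$ together with doubletons $\{i,x_i\}$ for $i>p$ forces, for every choice of distinct $x_{p+1},\dots,x_{s'}\in[s'+1,n']$, at least one absent doubleton. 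A one-line averaging over these choices yields at least $n'-s'=s'-\ell'$ missing pairs $(i,x)$, which combined with $\binom{n'}{2}\ge(s'-\ell')n'$ closes the inequality. The point is that the random ordering already supplies the ``suitably chosen family of matchings'' and the correct weights for free; your bipartite-graph framework, while sound as far as it goes, discards this alignment and is then forced to rebuild it by hand.
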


\begin{proof}[Proof of Proposition \ref{prop2}]
Fix a random ordering on $[n']$. For $S\subset [n'],$ put $\vartheta_i(S)=1$ if $S\in \ff_i'$ and $\vartheta_i(S)=0$ otherwise.
Similarly to \eqref{eq47v1}, (\ref{eq47}), it is sufficient to prove
\begin{equation}\label{eq51}\sum_{i=1}^{s'}\Biggl[n'\vartheta_i(\{i\}) +\sum_{x=s'+1}^{n'}\frac{{n'\choose 2}}{n'-s'}\vartheta_i(\{i,x\})\Biggr]\le (\ell'-1)n'+s'{n'\choose 2}.\end{equation}
W.l.o.g., suppose that $\vartheta_i(\{i\})=1$ if and only if $i\in[p]$.  If $p\le \ell'-1$, then we are done. If $\ell'=s'$ then the statement of the proposition is obvious since at least one of $\vartheta_i(\{i\})$ is equal to 0. Thus, we assume that $\ell'\le p\le s'-1$.

 Recall that $n' = s'+(s'-\ell')$. For any collection of distinct elements $x_{p+1},\ldots, x_{s'}\in [s'+1,n']$ we have $\sum_{i=p+1}^{s'}\vartheta_i(\{i,x_i\})\le s'-p-1$. By a simple averaging argument we get that at least a $1/(s'-p)$-proportion of pairs $(i,x)$, where $i =p+1,\ldots, s'$ and $x=s'+1,\ldots, n'$, satisfies $\vartheta_i(\{i,x\}) = 0$. This accounts for at least $n'-s' = s'-\ell'$ such pairs.
Therefore, the left hand side of (\ref{eq51}) in this case does not exceed $(s'-1)\Bigr[{n'\choose 2}+n'\Bigl]$, which is at most the right hand side of (\ref{eq51}) since ${n'\choose 2}=n'(s'-\frac{\ell'+1}2)\ge(s'-\ell')n'$.
\end{proof}

\begin{proof}[Proof of Lemma \ref{lem11}]

 We have $\pmb{\sum_{i=1}^s\beta_i(\emptyset)=p}$ for some $0\le p\le s-1$.
 W.l.o.g. assume that $\beta_i(\emptyset)=1$ if and only if $i\in [p]$.

 %If $\mathbf{p\ge s/2}$, then we have $\prod_{i=p+1}^s\beta_{i}(S_i)=0$ for any $s-p$ pairwise disjoint $S_i$, $|S_i|=2$. By a simple averaging we immediately get that $\beta_i(S) =0$ for at least ${s\choose 2}$ pairs $(i,S),$ where $|S| =2$ and $i\in [p+1,s]$. Since the families are closed upward, $\beta_i(S) = 0$ for at least $s$ pairs $(i,S)$, where $|S| =1$. Therefore, the left hand side of (\ref{eq47}) is at most $(s-1)\sum_{j=1}^{m+1}{n\choose j}$, which is smaller than the right hand side of (\ref{eq47}) for any $s\ge 3$.\\
 %If $\mathbf{p<s/2}$, then again  $\prod_{i=p+1}^s\beta_{i}(S_i)=0$ for any $s-p$ pairwise disjoint $S_i$. Consider the families $\ff'_i = \{S\subset[s]: \beta_i(S) = 1, |S|\le 2\}$, $i=p+1,\ldots, s$. These families are cross-dependent.  Applying the statement of Theorem \ref{thm2} to $\ff'_i$ (with $n':=2(s-p)-(s-2p), s'=s-p, l'=s-2p$, $m'=1$), we get that $\sum_{i=p+1}^s|\ff'_i|\le (s-2p-1)+(s-2p-1)s+(s-p){s\choose 2}.$
%Combining the last inequality with Observation \ref{obs11}, we get that the left hand side of (\ref{eq47}) is at most $p\sum_{j=0}^{m-1}{n\choose j}+(s-p-1){n\choose m}+s{n\choose m+1}$. Since ${n\choose m}\ge \sum_{j=0}^{m-1}{n\choose j}$, the last expression is at most $(s-1){n\choose m}+s{n\choose m+1}.$\\
%Let $n = s(m+1)-l$, $s\ge 4$, $2\le l\le s-1$. \\

Assume that $\pmb{ \ell/2\le p\le s-2}$. In this case we have  $s-p\le s-\ell/2 = \frac 12(2s-\ell)$. Then $\sum_{i=p+1}^s\beta_{i}(S_i)\le s-p-1$ for any pairwise disjoint $S_{p+1},\ldots, S_s$ of cardinality two. By a simple averaging argument we immediately get that at least a $1/(s-p)$-proportion of all pairs $(i,S)$ satisfy $\beta_i(S)=0$, where $i=p+1,\ldots, s$ and $S\in {[2s-\ell]\choose 2}$. In other words,
\begin{equation}\label{eqstup0}\beta_i(S) =0\text{ for at least }{2s-\ell\choose 2}\text{ pairs }(i,S),\text{ where }|S| \in {[2s-\ell]\choose 2}\text{ and }i\in [p+1,s].\end{equation}
Since the families $\ff_1,\ldots, \ff_s$ are cross-dependent, we similarly get that \begin{equation}\label{eqstup}\beta_i(S) = 0\text{ for at least \ \ \ }2s-\ell\text{\ \ \ pairs }(i,S),\text{ where }|S| \in {[2s-\ell]\choose 1} \text{ and }i\in [s].\ \ \ \ \ \ \ \ \end{equation}
%This is a consequence of the following easy principle: if  $\ff\subset{[n]\choose k}$ contains a $\zeta$-proportion of the sets then $\partial \ff$ contains at least a $\zeta$-proportion of the sets in ${[n]\choose k-1}$. (Note that this is weaker than the famous Kruskal-Katona theorem.) Applying the principle for $\ff_i$ for each $i\in [p+1,s]$, we get the statement.

  Therefore, the left hand side of (\ref{eq47}) is at most $p\sum_{j=0}^{m-1}{n\choose j}+(s-1)\sum_{j=m}^{m+1}{n\choose j}+s{n\choose m+2}$. Consider the difference $D$ between the RHS and the LHS of (\ref{eq47}). Then $D$ is at least
 \begin{equation}\label{eq52}{n\choose m+1}-(s-\ell){n\choose m}-(s-2)\sum_{j=0}^{m-1}{n\choose j}.\end{equation}
 Next we show that this expression is always nonnegative.
We have
\begin{equation}\label{eq54} {n\choose m+1}-(s-\ell){n\choose m} = \Bigl(\frac{m(s-1)+s-\ell}{m+1}-(s-\ell)\Bigr){n\choose m} =\frac{m(\ell-1)}{m+1}{n\choose m}.\end{equation}
It is easy to see that the right hand side of (\ref{eq54}) is bigger than the right hand side of (\ref{eq55}) for both $\ell=2$ and $\ell\ge 3$, which proves (\ref{eq47}) in this case. \vskip+0.1cm

If $\pmb{p=s-1}$, then $\beta_s(S)=0$ for all $S\subset [2s-\ell], |S|\le 3$. Therefore, the LHS (\ref{eq47}) is at most $(s-1)\sum_{j=0}^{m+2}{n\choose j}$, and $D$ is at least \begin{equation}\label{eq56}{n\choose m+2}+{n\choose m+1}-(s-\ell){n\choose m}-(s-1)\sum_{j=0}^{m-1}{n\choose j}.\end{equation}
Recall that $m\ge 1$. If $\ell\ge 3$, then one can see from \eqref{eq54} and \eqref{eq55} that
$${n\choose m+1}-(s-\ell){n\choose m}-(s-1)\sum_{j=0}^{m-1}{n\choose j}\ge \Big(\frac {m(\ell-1)}{m+1}-\frac{s-1}{s-2}\cdot \frac{(s-1)m-1}{(s-1)m+s-\ell+1}\Big){n\choose m},$$
which is always nonnegative. Indeed, it is easy to see for $\ell\ge 4$ (and thus $s\ge 4$), and for $\ell=3$ it can be verified separately.

If $\ell=2$ then due to \eqref{eq54} and \eqref{eq55} the expression \eqref{eq56} is at least ${n\choose m+2}-\sum_{j=0}^{m-1}{n\choose j}$. We have ${n\choose m+2}\ge {n\choose m}$ since $n = s(m+1)-\ell\ge sm+1\ge 2m+2$ for any $s\ge 3$ and $m\ge 1$. Finally, ${n\choose m}>\sum_{j=0}^{m-1}{n\choose j}$ by (\ref{eq55}). \vskip+0.1cm

 Suppose that $\pmb{p<\ell/2}$. Then  again  $\prod_{i=p+1}^s\beta_{i}(S_i)=0$ for any $s-p$ pairwise disjoint $S_i$. For each $i\in [p+1,s]$, consider the family $\ff'_i := \{S\subset[2s-\ell]: \beta_i(S) = 1, |S|\le 2\}$. These families are cross-dependent.  Applying (\ref{eq49}) to $\ff'_i$ with $2(s-p)-(\ell-2p),\ s-p$ and $\ell-2p$ playing the roles of $n',\ s'$ and $\ell'$, respectively, we get that
$$\sum_{i=p+1}^s|\ff'_i|\le (\ell-2p-1)(2s-\ell)+(s-p){2s-\ell\choose 2}.$$
(Note that we tacitly use that none of the $\ff_{p+1}',\ldots, \ff'_s$ contain the empty set by the assumption.)

We conclude that among the coefficients $\beta_i(S)$, where $i\in [s]$ and $|S|\in [2]$, there are at least $(2s-\ell)(s-\ell+p+1)$ that are equal to zero. The following observation is verified by a simple calculation.

\begin{obs}\label{obs11} Suppose $s\ge 3$ and $m\ge 1$. In the summation over $S$ in (\ref{eq47}), the coefficient in front of $\beta_i(S_1)$ for $|S_1| = 1$  is not bigger than the coefficient in front of $\beta_i(S_2)$ for $|S_2| = 2$. That is,
$$\frac{{n\choose m+1}}{{2s-\ell\choose 2}}\ge \frac{{n\choose m}}{{2s-\ell\choose 1}}.$$
\end{obs}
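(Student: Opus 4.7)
The plan is to reduce the observation to a one-line polynomial inequality via the standard identity $\binom{N}{j+1}/\binom{N}{j} = (N-j)/(j+1)$ applied to both sides. First I would rewrite the claim equivalently as
\begin{equation*}
\frac{\binom{n}{m+1}}{\binom{n}{m}} \;\ge\; \frac{\binom{2s-\ell}{2}}{\binom{2s-\ell}{1}},
\end{equation*}
which by the above identity becomes $\frac{n-m}{m+1} \ge \frac{2s-\ell-1}{2}$. Substituting $n = s(m+1)-\ell$ gives $n-m = m(s-1)+(s-\ell)$, so after clearing denominators the inequality takes the form
\begin{equation*}
2\bigl(m(s-1)+s-\ell\bigr) \;\ge\; (m+1)(2s-\ell-1).
\end{equation*}

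The key step is then the algebraic expansion: the left-hand side equals $2ms-2m+2s-2\ell$ and the right-hand side equals $2ms-m\ell-m+2s-\ell-1$, so the difference LHS$-$RHS simplifies, after collecting terms, to $m\ell - m - \ell + 1 = (m-1)(\ell-1)$. This is non-negative under the hypotheses $m \ge 1$ and $\ell \ge 1$ (indeed $\ell \ge 2$ in the setting of Lemma~\ref{lem11}), with equality iff $m=1$ or $\ell=1$. There is no genuine obstacle here beyond keeping the signs straight; the content of the observation is precisely this numerical identity, and once the binomial ratios are evaluated the verification is mechanical.
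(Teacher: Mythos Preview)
Your proof is correct and is precisely the ``simple calculation'' the paper alludes to without spelling out; the paper gives no explicit argument beyond that remark, and your reduction via $\binom{N}{j+1}/\binom{N}{j}=(N-j)/(j+1)$ followed by the factorisation $(m-1)(\ell-1)\ge 0$ is exactly what is intended.
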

Using the observation and the conclusion of the paragraph above it, we get that the left hand side of (\ref{eq47}) is at most $$p\sum_{j=0}^{m-1}{n\choose j}+(\ell-p-1){n\choose m}+s{n\choose m+1}+s{n\choose m+2}.$$ Since ${n\choose m}> \sum_{j=0}^{m-1}{n\choose j}$, the last expression is smaller than $(\ell-1){n\choose m}+s{n\choose m+1}+s{n\choose m+2}.$
\end{proof}

% =\frac{m(l-1)}{m+1}\cdot\frac{n-m+1}m{n\choose m-1} \ge (l-1)\frac{(s-1)m+1}{m+1}{n\choose m-1} >$$$$> \frac{2(s-1)m}{m+1}{n\choose m-1}.$$

\subsection{Proof of Theorem \ref{thm7}}

We prove the theorem by double induction. We apply induction on $m$, and for fixed $m$ we apply induction on $n$. The case $m = 0$ of (\ref{eq019}) is very easy to verify. The case $n = q$ is the bound (\ref{eqcross}).

We may assume that all the $\ff_i$ are shifted. The following two families on $[n-1]$ are typically defined for a family  $\ff\subset 2^{[n]}$:
\begin{align*} &\ff(n) := \bigl\{A-\{n\}: n\in A, A\in \ff\bigr\},\\
&\ff(\bar n):= \bigl\{A: n\notin A, A\in \ff\bigr\}.
\end{align*}

It is clear that $\ff_1(\bar n),\ldots, \ff_s(\bar n)$ are $q$-dependent. Next we show that $\ff_1(n),\ldots, \ff_s(n)$ are $(q-s)$-dependent. Assume for contradiction that $F_1,\ldots, F_s,$ where $F_i\in \ff_i(n),$ are pairwise disjoint and that $H:= F_1\cup\ldots\cup F_s$ has size at most $q-s$. Since $n\ge q$, we have $n-(q-s)\ge s$. Therefore, we can find distinct elements $x_1,\ldots, x_s\in [n]-H.$ Since $\ff_i$ are shifted, we have $F_i\cup\{x_i\}\in \ff_i$ for $ i\in[s]$, and the sets $F_i\cup\{x_i\}$ are pairwise disjoint. Their union $H\cup\{x_1,\ldots, x_s\}$ has size $|H|+s\le q$,  contradicting the assumptions of the theorem.

Recall the definition (\ref{eq085}). The induction hypothesis for $\ff_i(\bar n)$ gives
\begin{equation}\label{eq020}\sum_{i=1}^s|\ff_i(\bar n)|\le \sum_{i=1}^s|\tilde{\ff}_i^{n-1,q}|.\end{equation}

The induction hypothesis applied to $\ff_i(n)$ with $n-1,q-s, m-1$ playing the roles of $n,q, m$ gives
\begin{equation}\label{eq021}\sum_{i=1}^s|\ff_i(n)|\le \sum_{i=1}^s|\tilde{\ff}_i^{n-1,q-s}|.\end{equation}
Adding up (\ref{eq020}) and (\ref{eq021}), we get
$$\sum_{i=1}^s|\ff_i| = \sum_{i=1}^s\bigl(|\ff_i(n)|+|\ff_i(\bar n)|\bigr)\le \sum_{i=1}^s\bigl(|\tilde{\ff}_i^{n-1,q}|+|\tilde{\ff}_i^{n-1,q-s}|\bigr).$$
We have $\tilde{\ff}_i^{n-1,q} = \tilde{\ff}_i^{n,q}(\bar n)$ and $\tilde{\ff}_i^{n-1,q-s} = \tilde{\ff}_i^{n,q}(n).$ Thus, for any $i\in [s]$
$$|\tilde{\ff}_i^{n-1,q}|+|\tilde{\ff}_i^{n-1,q-s}| = |\tilde{\ff}_i^{n,q}(\bar n)|+|\tilde{\ff}_i^{n,q}(n)|=|\tilde{\ff}_i^{n,q}|.$$

\section{Application of Theorem \ref{thmhil} to an anti-Ramsey problem}
We call a partition $\ff_1\sqcup\ldots\sqcup \ff_{M}$ of
${[n]\choose k}$ into non-empty families $\ff_i$ an {\it $M$-coloring}. Let the anti-Ramsey number $ar(n,k,s)$ be the minimum $M$ such that in any $M$-coloring there is a \textit{rainbow $s$-matching}, that is, a set of $s$ pairwise disjoint $k$-sets from pairwise distinct $\ff_i$.

This quantity was studied by \"Ozkahya and Young \cite{OY}, who have made the following conjecture.
\begin{gypo}[\cite{OY}]\label{conoy} One has $ar(n,k,s) =e_k(n,s-1)+2$ for all $n> sk$.
\end{gypo}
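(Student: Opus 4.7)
The lower bound $ar(n,k,s) \ge e_k(n,s-1)+2$ is easy: take any extremal family $\ff \subset {[n]\choose k}$ with $|\ff| = e_k(n,s-1)$ and $\nu(\ff) \le s-2$ (for example, $\ff = \aaa_1^{(k)}(n,s-2) = \{A : A\cap[s-2] \ne \emptyset\}$), assign each member of $\ff$ its own unique color, and give all other $k$-sets a single additional color. Any rainbow $s$-matching would contain at most one set of the shared color, leaving $s-1$ pairwise disjoint singletons from $\ff$, contradicting $\nu(\ff) \le s-2$. For the upper bound I would argue by contradiction: suppose a coloring with $M = e_k(n,s-1)+2$ non-empty classes $\ff_1,\ldots,\ff_M$ admits no rainbow $s$-matching, and for each $i$ fix a representative $R_i \in \ff_i$ forming $\mathcal R = \{R_1,\ldots,R_M\}$. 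Since $|\mathcal R| = M > e_k(n,s-1)$, Definition~\ref{def2} forces $\nu(\mathcal R) \ge s-1$; while the no-rainbow assumption (applied to every possible representative system) forces $\nu(\mathcal R) \le s-1$. Hence $\nu(\mathcal R) = s-1$ for every choice of representatives.

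Fix an $(s-1)$-matching $A_1,\ldots,A_{s-1} \in \mathcal R$ with colors $c_1,\ldots,c_{s-1}$, and set $U := A_1\cup\cdots\cup A_{s-1}$, so $|U| = (s-1)k$. For any $k$-set $B$ with $\chi(B) \notin \{c_1,\ldots,c_{s-1}\}$ and $B \cap U = \emptyset$, the family $\{A_1,\ldots,A_{s-1},B\}$ is a rainbow $s$-matching, which is forbidden. Hence every set of a ``new'' color meets $U$, and the ${n-(s-1)k\choose k}$ sets of ${[n]\setminus U\choose k}$ are colored only by $\{c_1,\ldots,c_{s-1}\}$. I would then split on $\tau(\mathcal R)$. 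If $\tau(\mathcal R) \ge s$, Theorem~\ref{thmhil} (applied with $s-1$ in place of $s$, in the range where it applies) yields $|\mathcal R| \le |\mathcal H^{(k)}(n,s-1)|$. If $\tau(\mathcal R) = s-1$, pick a transversal $T$ with $|T|=s-1$ and use the inclusion $\mathcal R \subset \{A \in {[n]\choose k} : A \cap T \ne \emptyset\}$.

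The main obstacle is that neither bound is by itself at most $e_k(n,s-1)+1$: both exceed it by roughly ${n-s+1\choose k-1}$, so stability alone does not close the gap. To finish, I would combine the structural constraint (new colors live entirely on sets meeting $U$) with stability-type information. Concretely, I plan to vary the $(s-1)$-matching inside $\mathcal R$ and to swap representatives among the ``new''-color classes: each alternative matching $A_1',\ldots,A_{s-1}'$ produces its own forbidden region $U'$ outside of which no new color may appear, and aggregating these constraints should either expose a rainbow $s$-matching from a cleverly chosen representative system or exhibit a sub-family of ${[n]\choose k}$ with matching number strictly below $s-1$ but size exceeding $e_k(n,s-1)$, contradicting Definition~\ref{def2}. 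This coloring-aware refinement of the Hilton-Milner-type stability is precisely the anti-Ramsey stability result announced in the abstract, and carrying it out is the crux of the argument.
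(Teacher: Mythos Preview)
First, note that the statement you are trying to prove is a \emph{conjecture}: the paper does not establish it for all $n>sk$, only for $n\ge sk+(s-1)(k-1)$ and $k\ge 3$ (the Corollary after Theorem~\ref{thmar}). So a complete argument for the full range would exceed what the paper achieves.

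Your lower bound and the initial setup of the upper bound are fine, and you correctly diagnose that neither the Hilton--Milner bound $|\mathcal H^{(k)}(n,s-1)|$ nor the transversal bound ${n\choose k}-{n-s+1\choose k}$ on $|\mathcal R|$ drops below $e_k(n,s-1)+1$. The reason is structural: $e_k(n,s-1)$ is the extremal size for matching number $\le s-2$, while your $\mathcal R$ has $\nu(\mathcal R)=s-1$, so you are comparing against the wrong threshold. Your proposed fix---``vary the $(s-1)$-matching and swap representatives''---is not a proof but a description of a hope, and you end by invoking the very stability theorem (Theorem~\ref{thmar}) that needs to be supplied.

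The paper's argument, for the restricted range, repairs exactly this parameter mismatch. Instead of applying stability to all $M$ representatives, it first sets aside $s-1$ colors carrying a rainbow $(s-1)$-matching $F_1,\ldots,F_{s-1}$ and works with the remaining $M-s+1$ representatives $\mathcal G$. Since $M-s+1>h(n,k,s-1)$, either some choice of $\mathcal G$ contains an $(s-1)$-matching $G_1,\ldots,G_{s-1}$, or every choice satisfies $\tau(\mathcal G)\le s-2$. In the first case the \emph{two-matchings trick} finishes: each $G_i$ meets $\bigcup F_j$, so $\bigl|\bigcup F_i\cup\bigcup G_j\bigr|\le (2k-1)(s-1)$, and any $k$-set avoiding this union completes a rainbow $s$-matching with one of the two $(s-1)$-matchings; this is precisely where the bound $n\ge sk+(s-1)(k-1)$ enters. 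In the second case a separate combinatorial claim (Claim~\ref{cla1}) upgrades ``every representative system has an $(s-2)$-cover'' to ``$\bigcup_{i\le M-s+1}\ff_i$ has an $(s-2)$-cover'', and a further size argument shows that in fact $M-1$ of the color classes share such a cover, i.e.\ the coloring is $s$-star-like and $M\le e_k(n,s-1)+1$. The ideas you are missing are this separation of the $s-1$ matching colors from the rest (which realigns the parameters so that the $h(n,k,s-1)$ threshold becomes relevant), the two-matchings trick, and Claim~\ref{cla1}.
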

It is not difficult to see that $ar(n,k,s)\ge e_k(n,s-1)+2$ for any $n,k,s$. Indeed, consider a maximum size family of $k$-sets with no $(s-1)$-matching and assign a different color to each of these sets. Next, assign one new color to all the remaining sets. This is a coloring of ${[n]\choose k}$ without a rainbow $s$-matching.
In \cite{OY} the authors proved Conjecture~\ref{conoy} for $s=3$ and for $n\ge 2k^3s$. They also obtained the bound $ar(n,k,s)\le e_k(n,s-1)+s$ for $n\ge sk+(s-1)(k-1)$.

In this section we first state and prove a result for $n\ge sk+(s-1)(k-1)$, which is much stronger than Conjecture \ref{conoy} in that range. We say that the $M$-coloring of ${[n]\choose k}$ is \textit{$s$-star-like} if there exists a set $Y\in {[n]\choose s-2}$ and a number $i\in [M]$ such that for every $j\in [M]-\{i\}$ and $F\in \ff_j,$ $F$ intersects $Y$. Clearly, each $s$-star-like coloring has at most $e_k(n,s-1)+1$ colors. For convenience, we define the quantity $$h(n,k,s):=\max\{|\ff|:\ff\subset {[n]\choose k},\nu(\ff)<s,\tau(\ff)\ge s\},$$ which was determined for a certain range in Theorem \ref{thmhil}.

\begin{thm}\label{thmar} Let $s\ge 3$, $k\ge 2$ and $n\ge sk+(s-1)(k-1)$ be some integers. Consider an $M$-coloring of ${[n]\choose k}$ without a rainbow $s$-matching. Then either this coloring is $s$-star-like, or $M\le h(n,k,s-1)+s$.
\end{thm}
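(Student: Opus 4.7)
The plan is to pick representatives $R_i\in\ff_i$ for every color $i$, forming $\mathcal R=\{R_1,\ldots,R_M\}$, and to exploit the fact that $\nu(\mathcal R)\le s-1$, since any matching inside $\mathcal R$ uses sets of distinct colors and is therefore rainbow. The non-$s$-star-like hypothesis will be used to iteratively re-select the $R_i$ to boost $\tau(\mathcal R)$.

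If for some choice of representatives one has $\nu(\mathcal R)\le s-2$ and $\tau(\mathcal R)\ge s-1$, then by the very definition of $h(n,k,s-1)$ we obtain $M=|\mathcal R|\le h(n,k,s-1)$, which is even stronger than the stated bound. So the two problematic cases are (a) $\nu(\mathcal R)=s-1$ and (b) $\tau(\mathcal R)\le s-2$, and the argument must show that in either case we either reduce to the above good situation (up to at most $s$ exceptional sets) or conclude that the coloring is $s$-star-like.

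For case (a), fix a rainbow $(s-1)$-matching $F_1,\ldots,F_{s-1}\in\mathcal R$ in colors $c_1,\ldots,c_{s-1}$; since no rainbow $s$-matching exists, every set of every color $j\notin\{c_1,\ldots,c_{s-1}\}$ meets $U=\bigcup_i F_i$. Set $\mathcal R':=\mathcal R\setminus\{F_1,\ldots,F_{s-1}\}$. An exchange argument should yield $\nu(\mathcal R')\le s-2$: if $G_1,\ldots,G_{s-1}\in\mathcal R'$ formed a matching, then each $G_j$ meets $U$, and pigeonholing the bipartite intersection pattern between $\{G_j\}$ and $\{F_i\}$ either isolates some $F_{i_0}$ disjoint from every $G_j$ (so $\{F_{i_0},G_1,\ldots,G_{s-1}\}$ is a forbidden rainbow $s$-matching) or gives a bijective correspondence from which a single swap plus one extra set chosen from a free color again produces a rainbow $s$-matching. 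If in addition $\tau(\mathcal R')\ge s-1$, then $|\mathcal R'|\le h(n,k,s-1)$ by definition, so $M\le h(n,k,s-1)+s-1\le h(n,k,s-1)+s$.

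For case (b), or the residual subcase in which $\tau(\mathcal R')\le s-2$ within case (a), a cover $Y$ of size at most $s-2$ intersects every element of $\mathcal R$ (resp.\ $\mathcal R'$). Applying the non-$s$-star-like hypothesis to $Y$ and a carefully chosen color $i_0$ yields some color $j_0\ne i_0$ and a set $F^*\in\ff_{j_0}$ with $F^*\cap Y=\emptyset$. Substituting $R_{j_0}\leftarrow F^*$ preserves $\nu\le s-1$ (absence of a rainbow $s$-matching) and either strictly enlarges $\tau$ of the rep family or creates direct structural information. The main technical obstacle is showing this iteration terminates productively: either $\tau$ of the updated rep family reaches $s-1$ after finitely many swaps (and the previous paragraph closes the argument) or some invariant stabilizes into an explicit configuration witnessing $s$-star-likeness. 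Controlling the interplay between the matching $F_1,\ldots,F_{s-1}$, which must remain a rainbow matching throughout, and the changing $R_j$'s, while invoking the non-star-like hypothesis at precisely the right moment to extract the structural conclusion, is the most delicate part of the proof.
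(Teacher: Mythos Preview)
Your plan has the right skeleton, but two key steps do not work as sketched, and a third step is missing entirely.

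\textbf{Case (a).} The claim that $\nu(\mathcal R')\le s-2$ via a bipartite pigeonhole/swap is not correct. In the ``bijective'' situation where $G_j\cap F_i\ne\emptyset$ iff $i=j$, no $F_{i_0}$ is isolated, and no swap of the type you describe produces a rainbow $s$-matching: any mixed $(s-1)$-tuple like $F_1,G_2,\ldots,G_{s-1}$ is disjoint, but you then need a further set in a \emph{new} color disjoint from it, and there is no ``free color'' since every color already sits in $\mathcal R$. The paper instead argues that when $\mathcal R'$ contains a matching $G_1,\ldots,G_{s-1}$, one has $|\bigcup F_i\cup\bigcup G_j|\le (2k-1)(s-1)$ (because each $G_j$ meets $U$), and then the hypothesis $n\ge sk+(s-1)(k-1)=k+(2k-1)(s-1)$ guarantees a $k$-set $H$ in the complement; whatever color $H$ has, it extends one of the two $(s-1)$-matchings to a rainbow $s$-matching. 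Your sketch never invokes the lower bound on $n$, which is exactly what is needed here.

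\textbf{Case (b).} The iterative re-selection of representatives to boost $\tau$ has no termination argument: after substituting $R_{j_0}\leftarrow F^*$ the new family may admit a different cover $Y'$ of size $s-2$, and non-star-likeness then hands you a possibly different color to swap; this can cycle. The paper avoids iteration via a one-shot lemma: if $\tau$ of \emph{every} system of representatives is at most $s-2$, then a computation comparing $h(n,k,s-1)$ with $m(T,T'):=|\{F:F\cap T\ne\emptyset\ne F\cap T'\}|$ for distinct $(s-2)$-sets $T,T'$ shows that a \emph{single} $(s-2)$-set covers the union $\bigcup_i\ff_i$ of the first $M-s+1$ classes.

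\textbf{Missing endgame.} Even once you know a fixed $(s-2)$-set $T$ covers the representatives (or even the full classes) $\ff_1,\ldots,\ff_{M-s+1}$, you have not yet shown the coloring is $s$-star-like: that requires $T$ to cover all but \emph{one} of the $M$ classes. The paper handles this by showing that one of the remaining $s-1$ classes, say $\ff_M$, is large (at least $\tfrac1{s-1}\binom{n-s+2}{k}$), then proving every rainbow $(s-1)$-matching must use $\ff_M$; hence $\ff_1\cup\ldots\cup\ff_{M-1}$ has no rainbow $(s-1)$-matching, and the lemma from case~(b) applies again to produce the $(s-2)$-cover for all of $\ff_1,\ldots,\ff_{M-1}$.
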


Conjecture \ref{conoy} follows from Theorem \ref{thmar}, once we can apply Theorem \ref{thmhil} or an analogous statement. Indeed, it is shown in Theorem \ref{thmhil} that $h(n,k,s-1)$ is much smaller than $e_k(n,s-1)$, moreover, it implies that in most cases  $h(n,k,s-1)+s$ is smaller than $e_k(n,s-1)+2$. We do not need an exact Hilton--Milner-type result to deduce Conjecture~\ref{conoy}. We may use a weaker form of Theorem \ref{thmhil} that was proven in \cite{FK7} and is valid for a slightly wider range of parameters than Theorem \ref{thmhil}. In particular, \cite[Theorem~5]{FK7} implies that for $n\ge sk+(s-1)(k-1)$ we have $e_k(n,s-1)-h(n,k,s-1)\ge \frac 1s {n-s-k+2\choose k-1}$. For any $n\ge sk+(s-1)(k-1)$ and $k\ge 3$ we have $\frac 1s {n-s-k+2\choose k-1}> s-2$. Thus, Theorem~\ref{thmar} implies Conjecture \ref{conoy} for this range.

\begin{cor} We have $ar(n,k,s) = e_k(n,s-1)+2$ for $n\ge sk+(s-1)(k-1)$ and $k\ge 3$.
\end{cor}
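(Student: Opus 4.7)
The plan is to chain together (a) the easy lower bound on $ar(n,k,s)$, (b) Theorem \ref{thmar} to trichotomize $M$-colorings without rainbow $s$-matchings, and (c) the Hilton--Milner type bound on $h(n,k,s-1)$ stated in the paragraph preceding the corollary (from \cite{FK7}), so that the two cases of Theorem \ref{thmar} both force $M \le e_k(n,s-1)+1$.

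First, I would note that the inequality $ar(n,k,s)\ge e_k(n,s-1)+2$ is free: taking any $\ff\subset\binom{[n]}{k}$ with $\nu(\ff)<s-1$ of maximum size $e_k(n,s-1)$, give each member its own color and bundle the remaining $k$-sets into one extra color class; the resulting $(e_k(n,s-1)+1)$-coloring has no rainbow $s$-matching because any rainbow matching uses at most one set from the bundled class and hence at most $s-1$ pairwise disjoint sets in total. So the task is the upper bound: assume an $M$-coloring of $\binom{[n]}{k}$ has no rainbow $s$-matching, and show $M\le e_k(n,s-1)+1$.

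Next I would apply Theorem \ref{thmar}. In the $s$-star-like case there exist $Y\in\binom{[n]}{s-2}$ and $i\in[M]$ such that every set in every color class $\ff_j$ with $j\ne i$ meets $Y$. Since the family $\{F\in\binom{[n]}{k}:F\cap Y\ne\emptyset\}$ has matching number at most $|Y|=s-2<s-1$, its size is at most $e_k(n,s-1)$. Each color $j\ne i$ contributes at least one distinct set to this family, so $M-1\le e_k(n,s-1)$, giving $M\le e_k(n,s-1)+1$ as desired. In the other case Theorem \ref{thmar} yields $M\le h(n,k,s-1)+s$, so it remains to check $h(n,k,s-1)+s\le e_k(n,s-1)+1$, i.e.\ $e_k(n,s-1)-h(n,k,s-1)\ge s-1$.

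The last step is the main (and only nontrivial) calculation: invoking \cite[Theorem~5]{FK7} we have $e_k(n,s-1)-h(n,k,s-1)\ge \tfrac{1}{s}\binom{n-s-k+2}{k-1}$ whenever $n\ge sk+(s-1)(k-1)$. Substituting the lower bound $n\ge 2sk-s-k+1$ gives $n-s-k+2\ge 2(s-1)(k-1)+1$, so for $k\ge 3$ the binomial coefficient $\binom{n-s-k+2}{k-1}$ is at least quadratic in $s$, and a short monotonicity check in $s$ and $k$ (the base case $s=3,k=3$ already gives $\tfrac1s\binom{n-s-k+2}{k-1}\ge\tfrac13\binom{9}{2}=12>2$) shows $\tfrac{1}{s}\binom{n-s-k+2}{k-1}\ge s-1$ in the full range. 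The only real obstacle is this final numerical estimate, and it reduces to a straightforward growth comparison between a $(k-1)$-degree polynomial in $s$ (divided by $s$) and the linear quantity $s-1$, which is routine for $k\ge 3$.
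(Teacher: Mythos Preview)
Your proposal is correct and follows essentially the same route as the paper: the lower bound construction, the dichotomy from Theorem~\ref{thmar} (with the $s$-star-like case handled via the trivial bound $M\le e_k(n,s-1)+1$), and the gap estimate $e_k(n,s-1)-h(n,k,s-1)\ge \tfrac1s\binom{n-s-k+2}{k-1}$ from \cite[Theorem~5]{FK7}. The only cosmetic difference is that the paper states the final numerical inequality as $\tfrac1s\binom{n-s-k+2}{k-1}>s-2$ and then appeals to integrality of $e_k(n,s-1)-h(n,k,s-1)$, whereas you aim directly for $\ge s-1$; both versions are immediate once one substitutes $n-s-k+2\ge 2(s-1)(k-1)+1$.
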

 Note that this is the same range in which \"Ozkahya and Young got a weaker bound $ar(n,k,s) \le e_k(n,s-1)+s$. We remark that the case $k = 2$ has already been settled for all values of parameters (see \cite{OY} for the history of the problem).

\begin{proof}[Proof of Theorem \ref{thmar}]
Arguing indirectly, fix an $M$-coloring with $M\ge h(n,k,s-1)+s$  and with no rainbow $s$-matching.
We may assume that there is a rainbow $(s-1)$-matching $F_1,\ldots, F_{s-1}$ with, say, $F_i\in \ff_{M-i+1}$ for each $i\in [s-1]$. Otherwise, recolor some of the sets in new colors so that the number of colors increases and a rainbow $(s-1)$-matching (but no rainbow $s$-matching) appears.

For each $i\in [M-s+1]$, choose $G_i\in \ff_i$. Note that $M-s+1> h(n,k,s-1)$. Thus, either $\mathcal G:=\{G_1,\ldots, G_{M-s+1}\}\subset {[n]\choose k}-{U\choose k}$ for a suitable $U\subset {[n]\choose n-s+2}$ and for any choice of $G_i$, or for some choice of $G_i$ there is an $(s-1)$-matching, say $G_1,\ldots, G_{s-1}$, in $\mathcal G$.

In the latter case we can apply the argument used by \"Ozkahya and Young: since the colors of $F_1,\ldots, F_{s-1},G_1,\ldots, G_{s-1}$ are all distinct, any $k$-set from $[n]\setminus \bigcup_{i=1}^{s-1}G_i\cup F_i$ forms a rainbow $s$-matching with one of these two $(s-1)$-matchings. Moreover,  $G_i\cap (F_1\cup\ldots\cup F_{s-1})\ne \emptyset$ holds for each $i\in[s-1]$. Thus, $|\bigcup_{i=1}^{s-1}G_i\cup F_i|\le (2k-1)(s-1)$, and we are done provided $|[n]\setminus \bigcup_{i=1}^{s-1}G_i\cup F_i|\ge k$, which holds for $n\ge k+(2k-1)(s-1) = sk+(s-1)(k-1)$.

In the former case the family $\mathcal G$ must satisfy $\tau(\mathcal G)\le s-2$ \textit{for all choices} of  $G_1\in \ff_1,\ldots, G_{M-s+1}\in\ff_{M-s+1}$.  %Let $T:=T(\mathcal G)$ be a \textit{cover} of size $s-2$, that is, $T\cap G\ne \emptyset$ for all $G\in \mathcal G$.

\begin{cla}\label{cla1} Fix $N> h(n,k,s-1)$ and pairwise disjoint families $\mathcal H_1,\ldots, \mathcal H_N$ of $k$-subsets of $[n]$. If for any set of representatives $\mathcal H:=\{H_1,\ldots, H_N\}$ with $H_i\in \mathcal H_i$  we have $\tau(\mathcal H)\le s-2$, then $\tau\big(\bigcup_{i=1}^{N}\mathcal H_i\big)\le s-2$.
\end{cla}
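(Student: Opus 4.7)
My plan is to argue by contradiction: assume $\tau\bigl(\bigcup_{i=1}^{N}\mathcal H_i\bigr) \ge s-1$ and derive a contradiction with the hypothesis that every representative system $\{H_1,\ldots,H_N\}$ satisfies $\tau \le s-2$. Setting $\mathcal F := \bigcup_{i=1}^{N}\mathcal H_i$, the argument splits cleanly on the matching number $\nu(\mathcal F)$.

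If $\nu(\mathcal F) \le s-2$, then $\mathcal F$ is a family with $\nu < s-1$ and $\tau \ge s-1$, so by the very definition of $h(n,k,s-1)$ one has $|\mathcal F| \le h(n,k,s-1)$. But since the $\mathcal H_i$ are pairwise disjoint and each non-empty, $|\mathcal F| \ge N > h(n,k,s-1)$, a direct contradiction. This is the easy case.

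Otherwise $\nu(\mathcal F) \ge s-1$. I would pick an $(s-1)$-matching $F_1,\ldots,F_{s-1} \subset \mathcal F$ that maximises the number $r$ of distinct families $\mathcal H_i$ intersecting the matching. If $r = s-1$, the matching is ``rainbow'': padding it with arbitrary representatives from the unused families yields a representative system containing an $(s-1)$-matching, hence of covering number at least $s-1$, directly contradicting the hypothesis. If $r < s-1$, pigeonhole forces some family (say $\mathcal H_1$) to contain at least two matching elements, e.g. $F_1,F_{s-1} \in \mathcal H_1$. The key exchange observation is: if any $H \in \bigcup_{i>r}\mathcal H_i$ were disjoint from $F_1 \cup \cdots \cup F_{s-2}$, swapping $F_{s-1}$ for $H$ would produce an $(s-1)$-matching touching $r+1$ distinct families (since $\mathcal H_1$ remains represented by $F_1$ while a new family $\mathcal H_i$ with $i>r$ is added), contradicting the maximality of $r$. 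Hence every element of $\bigcup_{i>r}\mathcal H_i$ must meet the fixed $(s-2)k$-set $F_1 \cup \cdots \cup F_{s-2}$.

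The main obstacle lies in converting this structural information into an honest contradiction with $N > h(n,k,s-1)$. A single exchange confines $\bigcup_{i>r}\mathcal H_i$ inside the family of $k$-sets meeting a fixed $(s-2)k$-set, which on its own can easily exceed $h(n,k,s-1)$, so this bound is too crude. My plan to close the gap is to run the exchange simultaneously at every ``non-unique'' matching position $F_j$ (those $j$ whose $\mathcal H_{\sigma(j)}$ hosts another matching element), forcing the intersection $H \cap (F_1 \cup \cdots \cup F_{s-1})$ to \emph{not} be contained in any such $F_j$ --- a much stronger restriction. Combined with the observation that the maximality of $r$ itself limits the matching number of the sub-union $\bigcup_{i>r}\mathcal H_i$, one can fall back on the low-matching estimate of the first case applied to this sub-union, yielding the desired contradiction.
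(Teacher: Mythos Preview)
Your Case~1 and the sub-case $r=s-1$ of Case~2 are correct and clean. The genuine gap is in the sub-case $r<s-1$. Your closing plan rests on the assertion that ``the maximality of $r$ itself limits the matching number of the sub-union $\bigcup_{i>r}\mathcal H_i$'', but this is false: maximality of $r$ only bounds the \emph{rainbow} matching number of that sub-union (any $(s-1)$-matching there represents at most $r$ distinct colour classes), not the ordinary matching number. Nothing prevents a single $\mathcal H_j$ with $j>r$ from containing an entire $(s-1)$-matching on its own. Moreover, even if you could force $\nu\bigl(\bigcup_{i>r}\mathcal H_i\bigr)\le s-2$, applying the first-case estimate would further require $\tau\bigl(\bigcup_{i>r}\mathcal H_i\bigr)\ge s-1$, and passing to a subfamily of $\mathcal F$ can drop the covering number. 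Finally, even granting both, the conclusion would only be $N-r\le h(n,k,s-1)$, i.e.\ $N\le h(n,k,s-1)+r\le h(n,k,s-1)+s-2$, which does not contradict $N>h(n,k,s-1)$. The simultaneous-exchange refinement you describe does constrain where each $H\in\bigcup_{i>r}\mathcal H_i$ can meet $F_1\cup\cdots\cup F_{s-1}$, but that structural information is still far too weak to force $|\bigcup_{i>r}\mathcal H_i|$ below $h(n,k,s-1)$.

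The paper's argument avoids matchings entirely and is much shorter: fix an arbitrary representative system $\mathcal H=\{H_1,\ldots,H_N\}$ and a cover $T$ of size $s-2$; if some $H'\in\mathcal H_1$ misses $T$, swap it in to get a second cover $T'\ne T$ of size $s-2$ for $\{H',H_2,\ldots,H_N\}$. Then $H_2,\ldots,H_N$ all meet both $T$ and $T'$, so $N-1\le m(T,T'):=\bigl|\{F\in\binom{[n]}{k}:F\cap T\ne\emptyset\ne F\cap T'\}\bigr|$. A short calculation (reducing to the case $|T\cap T'|=s-3$) shows $m(T,T')<h(n,k,s-1)$, contradicting $N>h(n,k,s-1)$. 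The key idea you are missing is to exploit the covering hypothesis \emph{twice} to produce two distinct small covers and then count sets meeting both.
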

\begin{proof}Fix one choice of $\mathcal H$ and let $T$ be a hitting set of size $s-2$ for $\mathcal H$. Arguing indirectly, assume that there is a set $H'\in \mathcal H_1$ such that $H'\cap T = \emptyset$. The family $\mathcal H':= \{H',H_2,\ldots, H_{N}\}$ also satisfies $\tau(\mathcal H')= s-2$, and so there is a set $T'\ne T$, $|T'|\le s-2$, such that $H_2,\ldots, H_{N}$ all intersect $T'$.
Define $m(T,T'):=\bigl|\{F\subset {[n]\choose k}: F\cap T\ne \emptyset\ne F\cap T'\}\bigr|.$
We want to show that for any distinct $T,T'$ of size $s-2$ the quantity $m(T,T')$ is strictly smaller than $h(n,k,s-1).$ This contradicts the fact that $H_2,\ldots, H_N$ all intersect $T$ and $T'$.

Let us show that $m(T,T')$ is maximal when $|T\cap T'| = |T|-1 =s-3$. Indeed if $|T\cap T'|<|T|-1$, then we may choose $x\in T\setminus T', y\in T'\setminus T$ and define $T'':=(T'-\{y\})\cup \{x\}$. Let $F$ be an arbitrary set satisfying $F\cap T\ne \emptyset $, $F\cap T'\ne \emptyset$ and  $F\cap T''=\emptyset$. This means that $F\cap (T'\cup T'') = \{y\}$, $F\cap ((T-\{x\})\setminus T')\ne \emptyset$.
Setting $|T\cap T'| = t$, the number of such sets $F\in {[n]\choose k}$ is ${n-(s-2)-1\choose k-1}-{n-2(s-2)+t\choose k-1}$.

On the other hand, the sets $F$ satisfying $F\cap T\ne \emptyset, F\cap T''\ne \emptyset$, and $F\cap T' = \emptyset$ are those with $F\cap (T'\cup T'') = \{x\}$. Their number is ${n-(s-2)-1\choose k-1}$, which is clearly bigger.

Suppose now that $|T\cap T'|=s-3$. Then  $m(T,T') = {n\choose k}-{n-s+3\choose k}+{n-s+1\choose k-2}$. Since $h(n,k,s-1)\ge {n\choose k}-{n-s+2\choose k}-{n-s+2-k\choose k-1}+1$, we have $h(n,k,s-1)-m(T,T')> {n-s+1\choose k-1}-{n-s+2-k\choose k-1}> 0$. This completes the proof of the claim.
\end{proof}

Applying Claim \ref{cla1} to the first $M-s+1$ color classes, we get that they all intersect a set $T$ of size $s-2$. To complete the proof, we need to show that the same holds for some $M-1$ colors.

Note that since $\sum_{i=1}^{M-s+1}|\ff_i|\le {n\choose k}-{n-s+2\choose k}$, one of the last $s-1$ color classes, say $\ff_{M}$, has size at least $\frac 1{s-1}{n-s+2\choose k}$.

\begin{cla}In every rainbow $(s-1)$-matching, one of the $k$-sets belongs to $\ff_{M}$.\end{cla}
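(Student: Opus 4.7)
The plan is to argue by contradiction. Assume that there is a rainbow $(s-1)$-matching $H_1,\ldots,H_{s-1}$ with $H_i\in\ff_{c_i}$ where all $c_i$ lie in $[M-1]$ (so no $H_i$ is in $\ff_M$); I will produce a rainbow $s$-matching, violating the hypothesis.

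The first step is to reuse the \"Ozkahya--Young-type argument developed earlier in this proof, swapping the roles: $H_1,\ldots,H_{s-1}$ plays the part previously played by $F_1,\ldots,F_{s-1}$, and the color set $[M]\setminus\{c_1,\ldots,c_{s-1}\}$ (of size $M-s+1>h(n,k,s-1)$) plays the part previously played by $[M-s+1]$. For any system of representatives $G_i\in\ff_i$ indexed by $i\in[M]\setminus\{c_l\}$, suppose the SDR contains an $(s-1)$-matching $G_{j_1},\ldots,G_{j_{s-1}}$. Since $\{j_l\}\cap\{c_l\}=\emptyset$, each $G_{j_l}$ must meet $\bigcup_m H_m$ (else $G_{j_l},H_1,\ldots,H_{s-1}$ would be a rainbow $s$-matching). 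This bounds $|\bigcup_l G_{j_l}\cup\bigcup_m H_m|\le(s-1)(2k-1)\le n-k$, so the complement contains a $k$-set $K$; whatever color $K$ has, it completes either the $H$- or the $G$-matching to a rainbow $s$-matching, contradicting the standing hypothesis. Hence every SDR from $\{\ff_i\}_{i\ne c_l}$ has $\nu\le s-2$, and since it has cardinality $M-s+1>h(n,k,s-1)$, the definition of $h$ forces $\tau\le s-2$. Applying Claim~\ref{cla1} yields a hitting set $T'$ of size $s-2$ for $\bigcup_{i\ne c_l}\ff_i$; in particular every set of $\ff_M$ meets $T'$.

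To conclude, combine with the already-constructed hitting set $T$ (size $s-2$, covering $\ff_1\cup\cdots\cup\ff_{M-s+1}$) and the (WLOG) lower bound $|\ff_M\cap\binom{[n]\setminus T}{k}|\ge\tfrac{1}{s-1}\binom{n-s+2}{k}$, which comes from the pigeonhole $\sum_{i=M-s+2}^{M}|\ff_i\cap\binom{[n]\setminus T}{k}|=\binom{n-s+2}{k}$. Every set in $\ff_M\cap\binom{[n]\setminus T}{k}$ is disjoint from $T$ yet meets $T'$, hence must meet $T'\setminus T$. If $T'\subseteq T$ this forces $\ff_M\cap\binom{[n]\setminus T}{k}=\emptyset$, an immediate contradiction. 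Otherwise, setting $t=|T'\setminus T|\in[1,s-2]$,
\[
\frac{1}{s-1}\binom{n-s+2}{k}\ \le\ \binom{n-s+2}{k}-\binom{n-s+2-t}{k},
\]
and for $t=1$ this simplifies to $n-s+2\le k(s-1)$, which is clearly violated by $n\ge sk+(s-1)(k-1)$ whenever $k\ge 2$, producing the desired contradiction.

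The main obstacle is the binomial verification for larger values of $t$: if the hitting set $T'$ produced by Claim~\ref{cla1} happens to be nearly disjoint from $T$, the inequality $\binom{n-s+2-t}{k}>\tfrac{s-2}{s-1}\binom{n-s+2}{k}$ becomes tight and may fail when $s$ grows with respect to $n/k$. The fix is either (i) to choose $T'$ among all minimum hitting sets of $\bigcup_{i\ne c_l}\ff_i$ so as to maximize $|T'\cap T|$ (using the flexibility afforded by Claim~\ref{cla1} together with further SDR considerations), or (ii) to sharpen the lower bound on $|\ff_M\cap\binom{[n]\setminus T}{k}|$ by iterating the pigeonhole across the classes $\ff_{M-s+2},\ldots,\ff_{M-1}$, whose contributions to $\binom{[n]\setminus T}{k}$ must themselves meet $T'\setminus T$. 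Either route completes the contradiction in the full range $n\ge sk+(s-1)(k-1)$.
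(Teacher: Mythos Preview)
Your first paragraph is fine and matches the paper's approach: relabel so that the offending rainbow $(s-1)$-matching lies in colours $\ff_{M-s+1},\ldots,\ff_{M-1}$, run the \"Ozkahya--Young argument against the remaining $M-s+1$ classes $\ff_1,\ldots,\ff_{M-s},\ff_M$, and invoke Claim~\ref{cla1} to obtain a hitting set $T'$ of size $s-2$ for their union. In particular every member of $\ff_M$ meets $T'$.

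The gap is in your second paragraph. The inequality you need,
\[
\frac{1}{s-1}\binom{n-s+2}{k}\ >\ \binom{n-s+2}{k}-\binom{n-s+2-t}{k},
\]
genuinely fails for $t\ge 2$ in the stated range. For instance, take $k=3$, $s=4$, $n=sk+(s-1)(k-1)=18$: then $\tfrac{1}{3}\binom{16}{3}=\tfrac{560}{3}<196=\binom{16}{3}-\binom{14}{3}$, so no contradiction arises when $|T'\setminus T|=2$. Neither of your proposed fixes closes this: there is no mechanism forcing a minimum hitting set for $\bigcup_{i\ne c_l}\ff_i$ to overlap $T$, and iterating the pigeonhole over $\ff_{M-s+2},\ldots,\ff_{M-1}$ runs into the same problem, since those classes need not be covered by $T'$ at all.

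The paper avoids the comparison of $T$ and $T'$ entirely. Once $T'$ is in hand, simply count: the $M-s+1$ colour classes $\ff_1,\ldots,\ff_{M-s},\ff_M$ are pairwise disjoint and all contained in $\binom{[n]}{k}\setminus\binom{[n]\setminus T'}{k}$, whence
\[
(M-s)+|\ff_M|\ \le\ \binom{n}{k}-\binom{n-s+2}{k}.
\]
Now combine $|\ff_M|\ge\tfrac{1}{s-1}\binom{n-s+2}{k}$ with $M-s\ge h(n,k,s-1)>\binom{n}{k}-\binom{n-s+3}{k}$ (the latter inequality was established inside the proof of Claim~\ref{cla1}). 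The contradiction follows from $\binom{n-s+3}{k}\le\tfrac{s+1}{s}\binom{n-s+2}{k}<\tfrac{s}{s-1}\binom{n-s+2}{k}$, which holds as soon as $n-s+3-k\ge sk$, i.e.\ throughout the range $n\ge sk+(s-1)(k-1)$. This replaces your entire second paragraph and needs no case analysis on $t$.
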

\begin{proof} Arguing indirectly, assume that there is a rainbow  $(s-1)$-matching in colors $\ff_{M-s+1},\ldots,$ $\ff_{M-1}$. Applying Claim \ref{cla1} to $\ff_1,\ldots, \ff_{M-s},\ff_{M}$, we find a hitting set $T$ of size $s-2$ for $\ff_1\cup\ldots\cup \ff_{M-s}\cup\ff_{M}$. We infer
\begin{equation}\label{eq099}M-s+\frac 1{s-1}{n-s+2\choose k}\le |\ff_1\cup\ldots\cup \ff_{M-s}\cup\ff_{M}| \le {n\choose k}- {n-s+2\choose k}.\end{equation}
We have $M-s\ge h(n,k,s-1)>{n\choose k}-{n-s+3\choose k}$. Also, we have $${n-s+3\choose k} = \frac{n-s+3}{n-s+3-k}{n-s+2\choose k}\le \frac {s+1}s{n-s+2\choose k},$$
provided $n-s+3-k\ge sk$. The inequalities above contradict (\ref{eq099}), and so the claim follows.\end{proof}

We conclude that there is no rainbow $(s-1)$-matching in $\ff_1\cup\ldots\cup\ff_{M-1}$, which implies that there is a cover of size $s-2$ for any set of distinct representatives of the color classes. In turn, Claim \ref{cla1} implies that $\ff_1\cup\ldots\cup\ff_{M-1}$ can be covered by a set $T$ of size $s-2$, i.e., the coloring is $s$-star-like. Theorem \ref{thmar} is proved.\end{proof}

Next, we prove a weaker bound on $ar(n,k,s)$, which is valid for all $n> sk$:

\begin{thm} We have $ar(n,k,s)\le e_k(n,s-1)+\frac{(s-1){n\choose k}}{{n-(s-1)k\choose k}}+1$ for any $n>ks$.
\end{thm}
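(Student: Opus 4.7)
The plan is to split the color classes into ``large'' and ``small'' by a carefully chosen mass threshold, use the small classes to produce a rainbow $(s-1)$-matching, and then derive a contradiction from the absence of a rainbow $s$-matching by a direct counting argument. Set $t:=\binom{n-(s-1)k}{k}/(s-1)$ and call a color class $\ff_i$ \emph{large} if $|\ff_i|\ge t$, and \emph{small} otherwise. Since $\sum_i|\ff_i|=\binom{n}{k}$, the number of large classes is at most $\binom{n}{k}/t = \frac{(s-1)\binom{n}{k}}{\binom{n-(s-1)k}{k}}$. Assuming for contradiction that an $M$-coloring with $M\ge e_k(n,s-1)+\frac{(s-1)\binom{n}{k}}{\binom{n-(s-1)k}{k}}+1$ contains no rainbow $s$-matching, at least $e_k(n,s-1)+1$ color classes must then be small.

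Next, I would pick one representative $G_i\in\ff_i$ from each small color class. The resulting family $\mathcal{G}$ consists of pairwise distinct $k$-sets (distinct color classes are disjoint) and satisfies $|\mathcal{G}|>e_k(n,s-1)$, so by Definition~\ref{def2} we have $\nu(\mathcal{G})\ge s-1$. Fix any $(s-1)$-matching $F_1,\ldots,F_{s-1}$ inside $\mathcal{G}$; since its members come from pairwise distinct color classes, it is automatically rainbow. Denote the corresponding (small) colors by $c_1,\ldots,c_{s-1}$.

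Finally, let $U:=F_1\cup\ldots\cup F_{s-1}$, so $|U|=(s-1)k$ and $n-(s-1)k\ge k$ thanks to the hypothesis $n>sk$. Any $k$-set $G\subset[n]\setminus U$ whose color lies outside $\{c_1,\ldots,c_{s-1}\}$ would, together with $F_1,\ldots,F_{s-1}$, constitute a rainbow $s$-matching, contradicting our assumption. Hence every element of $\binom{[n]\setminus U}{k}$ belongs to $\ff_{c_1}\cup\ldots\cup\ff_{c_{s-1}}$, yielding
$$\binom{n-(s-1)k}{k}\;\le\;\sum_{j=1}^{s-1}|\ff_{c_j}|\;<\;(s-1)t\;=\;\binom{n-(s-1)k}{k},$$
the desired contradiction. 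The argument is essentially a clean two-step pigeonhole and presents no real obstacle; the one ingredient requiring foresight is the choice of the threshold $t$, which is calibrated precisely so that $s-1$ small classes together fall just short of covering the $\binom{n-(s-1)k}{k}$ sets disjoint from a fixed $(s-1)$-matching.
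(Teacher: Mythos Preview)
Your argument is correct, and it takes a genuinely different route from the paper's. The paper fixes an arbitrary system of representatives $G_1,\ldots,G_M$ and then \emph{greedily} extracts as many vertex-disjoint rainbow $(s-1)$-matchings as possible from it, say $T$ of them; the leftover representatives have $\nu<s-1$, giving $M-T(s-1)\le e_k(n,s-1)$. For each extracted matching with union $F$, every $k$-set inside $[n]\setminus F$ must be colored with one of the $s-1$ colors of that matching, and since the $T$ matchings use $T(s-1)$ pairwise distinct colors, the families $\binom{[n]\setminus F}{k}$ are pairwise disjoint; hence $T\binom{n-(s-1)k}{k}\le\binom{n}{k}$, and combining the two bounds yields the theorem.

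Your approach replaces this iterative extraction by a single threshold $t=\binom{n-(s-1)k}{k}/(s-1)$: the number of large classes is controlled by total mass, so enough small classes remain to force one rainbow $(s-1)$-matching among their representatives, and then $s-1$ small classes are too small to cover $\binom{[n]\setminus U}{k}$. This is cleaner and more direct---one matching instead of $T$, and no disjointness argument between complements---at the price of having to spot the right threshold in advance. The paper's method, by contrast, requires no clever calibration and makes the trade-off between $T$ and $M-T(s-1)$ transparent; it is also the template reused (with a refinement) in the next theorem of the paper.
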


\begin{proof}
Fix an $M$-coloring of ${[n]\choose k}$ with no rainbow $s$-matching and let $G_1,\ldots, G_M$ be a set of representatives of the color classes. Reordering $G_i$ if necessary, we may assume that for some integer $T$ the  collections $\{G_1,\ldots, G_{s-1}\},\ \{G_s,\ldots, G_{2(s-1)}\},$ $\ldots,\{G_{(T-1)(s-1)+1},$ $\ldots, G_{T(s-1)}\}$ are rainbow $(s-1)$-matchings, while the remaining collection $\{G_{T(s-1)+1},\ldots, G_M\}$ does not contain a rainbow $(s-1)$-matching. We have \begin{equation}\label{eqar}M-T(s-1)\le e_k(n,s-1).\end{equation}

Set $H_i := [n]-(G_{i(s-1)+1}\cup\ldots\cup G_{(i+1)(s-1)})$ for $i=0,\ldots, T-1$. The non-existence of rainbow $s$-matchings in the coloring implies that in the coloring of ${H_i\choose k}$ we only used the colors of the $s-1$ sets $G_{i(s-1)+1},\ldots, G_{(i+1)(s-1)}.$ Consequently, ${H_i\choose k}$ and ${H_j\choose k}$ must be disjoint for $i\ne j$. From this ${n\choose k}\ge T{n-(s-1)k\choose k}$ follows. Combining with (\ref{eqar}), we obtain $M\le e_k(n,s-1)+(s-1){n\choose k}/{n-(s-1)k\choose k}$, as claimed.\end{proof}

Finally, we prove a bound that is valid for $n>(s+\sqrt s) k$ and is stronger than the previous one in many cases.

\begin{thm} We have  $ar(n,k,s)< e_k(n,s-1)+\frac{(s-1)n(n-sk)}{(n-sk)^2-k((2s-1)k-n)}+1$ for any $n>(s+\sqrt s)k$. In particular, if $n = csk$ for some fixed $c>1$ and $s\to \infty$, then $ar(n,k,s)\le e_k(n,s-1)+(1+o(1))s\frac{c}{c-1}$.
\end{thm}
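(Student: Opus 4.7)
The plan is to imitate the proof of the preceding theorem --- pick a representative $G_i\in\ff_i$ from each colour class and reorder so that $M_1=\{G_1,\ldots,G_{s-1}\},\ldots,M_T=\{G_{(T-1)(s-1)+1},\ldots,G_{T(s-1)}\}$ is a maximal greedy sequence of rainbow $(s-1)$-matchings while $\{G_{T(s-1)+1},\ldots,G_M\}$ contains no such matching, so that $M\le T(s-1)+e_k(n,s-1)$ --- and then sharpen the bound on $T$ by using the no-rainbow-$s$-matching condition more carefully. Set $H_i:=[n]\setminus M_i$. The key observation, and the reason for the improvement, is that for all $i\ne j$ the intersection $H_i\cap H_j$ contains no $k$-subset: if $F\in{H_i\cap H_j\choose k}$, then $F$ is disjoint from both $M_i$ and $M_j$, so the colour of $F$ would have to lie simultaneously among the $s-1$ colours of $M_i$ (otherwise $\{F\}\cup M_i$ is a rainbow $s$-matching) and among those of $M_j$, which is impossible because the colour palettes of distinct $M_i$'s are disjoint. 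This gives $|M_i\cup M_j|>n-k$ and hence $|M_i\cap M_j|<(2s-1)k-n$ for all $i<j$.

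Next I will translate this into a bound on $T$ via double counting. Writing $t_x:=|\{i:x\in M_i\}|$, one has $\sum_{x\in[n]}t_x=T(s-1)k$ and $\sum_{i<j}|M_i\cap M_j|=\sum_x{t_x\choose 2}$, while Cauchy--Schwarz gives $\sum_x t_x^2\ge (T(s-1)k)^2/n$. Combining with the pairwise upper bound above yields
$$\frac{T^2(s-1)^2k^2}{n}-T(s-1)k< T(T-1)\bigl((2s-1)k-n\bigr).$$
Dividing by $T$ and collecting terms, the inequality becomes $T\cdot D<n(n-sk)$, where $D:=n^2-(2s-1)nk+(s-1)^2k^2=(n-sk)^2-k((2s-1)k-n)$. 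Writing $n=(s+\alpha)k$ one checks $D=k^2(\alpha^2+\alpha-s+1)$, which is positive whenever $\alpha>\sqrt{s}$, exactly matching the hypothesis $n>(s+\sqrt{s})k$. Therefore $T<n(n-sk)/D$, and substituting into $ar(n,k,s)\le T(s-1)+e_k(n,s-1)+1$ produces the advertised inequality.

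For the asymptotic statement, with $n=csk$ and $s\to\infty$ the numerator equals $c(c-1)s^2k^2$ while $D=k^2[c^2s^2-(2s-1)cs+(s-1)^2]=(1+o(1))(c-1)^2s^2k^2$, so the fraction tends to $c/(c-1)$ and multiplication by $s-1$ yields the claimed $(1+o(1))s\cdot c/(c-1)$. The hard part, conceptually, is isolating the right strengthening: the previous theorem used only the weaker pairwise disjointness of ${H_i\choose k}$ and ${H_j\choose k}$ inside ${[n]\choose k}$, obtaining a ratio of binomial coefficients of degree $k$, whereas here the sharper observation that $H_i\cap H_j$ itself cannot even contain a $k$-subset reduces the problem to a quadratic-in-$n$ estimate. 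Once this is spotted, the Cauchy--Schwarz step and the subsequent algebraic simplification are routine.
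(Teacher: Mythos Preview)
Your proof is correct and follows essentially the same approach as the paper's own argument: the same greedy decomposition into $T$ rainbow $(s-1)$-matchings, the same key observation that $|M_i\cup M_j|>n-k$ (equivalently $|M_i\cap M_j|<(2s-1)k-n$), the same double-counting of $\sum_{i<j}|M_i\cap M_j|$ via the degrees $t_x$ with the convexity/Cauchy--Schwarz lower bound, and the same algebraic manipulation leading to $T\cdot D<n(n-sk)$. The only cosmetic difference is that the paper writes the convexity step as $\sum_x\binom{d_x}{2}\ge n\binom{d}{2}$ rather than invoking Cauchy--Schwarz on $\sum t_x^2$, but these are equivalent.
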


\begin{proof} Following the notations of the previous proof, put $F_i:=G_{i(s-1)+1}\cup\ldots\cup G_{(i+1)(s-1)}$ for $i=0,\ldots, T-1$. Note that $|F_i| = (s-1)k$. Then for each $i\ne j$ we have $|F_i\cup F_j|>n-k$ since otherwise any $k$-set in $[n]\setminus (F_i\cup F_j)$ will form a rainbow $s$-matching  with either the sets forming $F_i$ or the sets forming $F_j$. Thus, we have $|F_i\cap F_j|=2|F_i|-|F_i\cup F_j| < (2s-1)k-n$ and
$$\sum_{0\le i<j\le T-1}|F_i\cap F_j|< {T\choose 2}((2s-1)k-n).$$
On the other hand, if an element $x$ is contained in $d_x$ sets $F_i$, then it contributes ${d_x\choose 2}$  to the sum $\sum_{0\le i<j\le T-1}|F_i\cap F_j|$. The average number $d$ of sets containing an element of $[n]$   satisfies $d= \frac {T(s-1)k}n$. Therefore, we get
$$\sum_{0\le i<j\le T-1}|F_i\cap F_j| = \sum_{x\in [n]}{d_x\choose 2}\ge n{d\choose 2}.$$
Combining the two displayed formulas, we get
$$T(T-1)((2s-1)k-n))> n\frac{T(s-1)k}n\Big(\frac{T(s-1)k}n-1\Big).$$
After rearranging and simplifying the expression above we get $$\Big(\frac{((s-1)k)^2}n-(2s-1)k+n\Big)T< (s-1)k-(2s-1)k+n,$$
which is equivalent to
\begin{equation}\label{eqarr} \big((n-sk)^2-(2s-1)k^2+nk\big)T< n(n-sk).\end{equation}
The left hand side is positive provided $n=sk+x$, where $x^2\ge k((s-1)k-x)$. This holds for $x = k\sqrt s$. In this assumption it is clear that (\ref{eqarr}) combined with \eqref{eqar} implies the first statement of the theorem. If $s\to \infty$ and $n = csk$ with $c>1$, then (\ref{eqarr}) transforms into $(1+o(1))(c-1)^2T\le c(c-1)$, which, together with \eqref{eqar}, implies the second statement of the theorem.
\end{proof}

We remark that a similar proof, applied in a more general scenario, appeared in \cite{Kup}.

\section{Almost matchings}

Let us say that the sets $F_1,\ldots, F_s$ form an \textit{almost matching} if the family $\ff:=\{F_1,\ldots, F_s\}$ has at most one vertex of degree greater than one and that vertex has degree at most two.

Define $$a(n,s):=\max\bigl\{|\ff|: \ff\subset 2^{[n]}, \ff\text{ contains no almost-matching of size }s\bigr\}.$$

%Since almost matching includes matching, $a(m,s)\le e(m,s)$ is obvious.

\begin{thm}\label{thm12} The equality $a(sm-2,s) = \sum_{t= m}^{sm-2}{sm-2\choose t}$ holds for all $s\ge 2, m\ge 1.$
Moreover, the equality is achieved only by the family $\{F\subset [sm-2]: |F|\ge m\}$.
\end{thm}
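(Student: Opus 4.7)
My plan is to reduce to $\ff$ being a shifted up-set, then encode ``no almost matching of size~$s$'' as a cross-dependence condition that Theorem~\ref{thm2} can handle. Up-closing $\ff$ does not create an almost matching: if we replace $F\in\ff$ by a superset $F'\supseteq F$, then an alleged almost matching $\{F',G_2,\ldots,G_s\}$ induces the almost matching $\{F,G_2,\ldots,G_s\}$, since subset replacement only reduces pairwise intersections and vertex degrees, preserving both the ``at most one shared vertex'' and the ``degree at most~$2$'' conditions; shifting preserves the property by the standard preimage argument. The key structural observation is that $\ff$ has no almost matching of size~$s$ if and only if the $s$ families $\ff_1=\cdots=\ff_{s-1}=\ff$ and $\ff_s=\ff\cup\partial\ff$ are cross-dependent (with $\partial\ff$ the immediate shadow of~\eqref{eq86}). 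Indeed, given an almost matching with shared vertex $v\in F_1\cap F_2$, the tuple $(F_2,\ldots,F_s,F_1\setminus\{v\})$ is pairwise disjoint with its last entry in $\partial\ff\cup\ff$; conversely, lifting $G_s=F_s\setminus\{v\}\in\partial\ff\setminus\ff$ back to $F_s\in\ff$ in a cross-matching $(G_1,\ldots,G_{s-1},G_s)$ produces an almost matching $(G_1,\ldots,G_{s-1},F_s)$ in~$\ff$, because re-inserting $v$ can create at most one shared vertex of degree~$2$, with at most one of the pairwise-disjoint $G_i$'s containing~$v$.

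The identity $n=sm-2=s(m-1)+s-2$ places us in Theorem~\ref{thm2} with its $m$ read as $m-1$ and $\ell=2$, so the cross-dependence above yields
\[
(s-1)|\ff|+|\ff\cup\partial\ff|\le\binom{n}{m-1}+s\sum_{t\ge m}\binom{n}{t},
\]
which, using $|\ff\cup\partial\ff|=|\ff|+|\partial\ff\setminus\ff|$, rearranges to
\[
s\,|\ff|+|\partial\ff\setminus\ff|\le\binom{n}{m-1}+s\sum_{t\ge m}\binom{n}{t}.
\]
The desired bound $|\ff|\le\sum_{t\ge m}\binom{n}{t}$ follows at once if one can establish the shadow inequality $|\partial\ff\setminus\ff|\ge\binom{n}{m-1}$. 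Uniqueness is then inherited from the equality case of Theorem~\ref{thm2}: equality there forces $(\ff_1,\ldots,\ff_s)$ to be a permutation of the canonical extremal tuple $(\tilde\ff_1^{n,q},\ldots,\tilde\ff_s^{n,q})$ from~\eqref{eq085}, which uniquely pins down $\ff=\{F:|F|\ge m\}$ and $\ff\cup\partial\ff=\{F:|F|\ge m-1\}$.

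The shadow inequality is the hard step. It fails for general shifted up-sets---for instance $\ff=\{[n]\}$ has $|\partial\ff\setminus\ff|=n<\binom{n}{m-1}$ once $m\ge 3$---so both the no-almost-matching hypothesis and the special value $n=sm-2$ must really be used. I plan to split on whether $[m-1]\in\ff$: if not, shiftedness forces $\ff\subseteq\{F:|F|\ge m\}$ and there is nothing to prove; if $[m-1]\in\ff$, the up-set property puts every superset of $[m-1]$ in $\ff$, and the explicit configuration
\[
\bigl\{[m-1],\,[m,2m-1],\,[2m,3m-1],\,\ldots,\,[(s-2)m,(s-1)m-1],\,\{1\}\cup[(s-1)m,sm-2]\bigr\}
\]
has total size $sm-1$ and union $[sm-2]$, hence deficiency exactly~$1$, so it is an almost matching and the hypothesis ejects at least one of the listed $m$-sets from $\ff$; by shiftedness this single missing $m$-set propagates to a substantial shortfall in $|\ff_m|$, which a Kruskal--Katona-type shadow bound converts into the $\binom{n}{m-1}$ elements of $\partial\ff\setminus\ff$ that are required. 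I expect the propagation step to be the most delicate point; a cleaner backup is to re-run the random-chain weighting of Lemma~\ref{lem11}, replacing the ``pairwise disjoint'' forbidden tuples $(S_1,\ldots,S_s)\subseteq[2s-2]$ by the strictly larger class of all $s$-tuples of deficiency at most~$1$, since no almost matching forbids $\prod_i\beta_i(S_i)=1$ on all of them; the extra zero tuples should produce the missing $\tfrac{1}{s}\binom{n}{m-1}$ saving directly, bypassing the shadow inequality altogether.
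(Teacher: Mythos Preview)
Your setup matches the paper's: put $\ff_1=\cdots=\ff_{s-1}=\ff$ and $\ff_s=\ff\cup\partial\ff$ (for a nonempty up-set this coincides with the paper's $\partial\ff\cup\{[n]\}$), observe cross-dependence, and apply Theorem~\ref{thm2} with $\ell=2$ to obtain $(s-1)|\ff|+|\ff_s|\le{n\choose m-1}+s\sum_{t\ge m}{n\choose t}$.

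The genuine gap is in the shadow step. You set out to prove $|\partial\ff\setminus\ff|\ge{n\choose m-1}$ for every shifted up-set $\ff$ with no almost matching, splitting on whether $[m-1]\in\ff$ and, in the case $[m-1]\in\ff$, planning to squeeze the inequality out of the almost-matching hypothesis via a missing $m$-set and Kruskal--Katona. But the inequality is false in that generality. Take $s=2$, $m=5$, $n=8$, and $\ff=\{F\subseteq[8]:[4]\subseteq F\}$. This is a shifted up-set containing $[m-1]=[4]$; any two of its members share all of $[4]$, so $\ff$ has no almost matching of size~$2$; yet $\partial\ff\setminus\ff=\{G:|[4]\setminus G|=1\}$ has size $4\cdot 2^4=64<70={8\choose 4}$. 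So no argument along your case-(b) lines can succeed, and your diagnosis that ``both the no-almost-matching hypothesis and the special value $n=sm-2$ must really be used'' for the shadow step is mistaken.

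What actually closes the argument is that the shadow inequality is only needed \emph{conditionally}, namely under the assumption $|\ff|\ge\sum_{t\ge m}{n\choose t}$, and in that regime it is a pure up-set statement requiring neither hypothesis. This is the paper's Lemma~\ref{lem21}: among up-sets of size $\sum_{t\ge m}{n\choose t}$ the shadow is uniquely minimised by ${[n]\choose\ge m}$, giving $|\partial\ff|\ge\sum_{t=m-1}^{n-1}{n\choose t}$ and hence $|\ff_s|\ge\sum_{t\ge m-1}{n\choose t}$. Feeding this back into the Theorem~\ref{thm2} bound forces equality everywhere, so $|\ff|=\sum_{t\ge m}{n\choose t}$ and, by the uniqueness in Lemma~\ref{lem21}, $\ff={[n]\choose\ge m}$. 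A secondary point: your uniqueness argument invokes the equality case of Theorem~\ref{thm2}, but no such characterisation is stated or proved in the paper; uniqueness is extracted from Lemma~\ref{lem21} instead.
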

\begin{proof} Suppose $\ff\subset 2^{[n]}$ contains no almost-matching of size $s$, where $n=sm-2$.
We may suppose that $\ff$ is an up-set.  Consider the families $\ff_1,\ldots, \ff_s$, where $\ff_{i} := \ff$ for $i=1,\ldots, s-1$, and $\ff_s := \partial \ff\cup\{[n]\}$ (see (\ref{eq86})).

  \begin{cla}
    The families $\ff_1,\ldots, \ff_s$ are cross-dependent.
  \end{cla}
  \begin{proof}
    Indeed, if $F_1,\ldots, F_s$, where $F_i\in \ff_i$, are pairwise disjoint then, replacing $F_s$ by some $F\in \ff$, $F_s\subset F, |F\setminus F_s|= 1$, we obtain $s$ members $F_1,\ldots, F_{s-1},F$ of $\ff$ that form an almost-matching.
  \end{proof}
  Applying Theorem \ref{thm2} yields
  \begin{equation}\label{eq87}\sum_{i=1}^s|\mathcal F_i|\le {n\choose m-1}+s\sum_{t= m}^{n}{n\choose t}.\end{equation}

\begin{lem}\label{lem21}
  Let $\ff\subset 2^{[n]}$ be an up-set and suppose that for some $1\le k\le n$, $|\ff|=\sum_{i=k}^n {n\choose i}$. Then $\partial \ff$ satisfies \begin{equation}\label{eqsh1}
                   \partial(\ff)\ge \sum_{i=k-1}^{n-1}{n\choose i}
                 \end{equation}
  with equality if and only if $\ff={[n]\choose \ge k}$.
\end{lem}
Let us first deduce Theorem~\ref{thm12} from Lemma~\ref{lem21}. By the lemma, if $|\ff|>\sum_{t=m}^n {n\choose t}$ or $|\ff|=\sum_{t=m}^n {n\choose t}$ and $\ff\ne {[n]\choose \ge m}$   then $|\ff_s|>\sum_{t=m-1}^n {n\choose t},$ which contradicts \eqref{eq87}. This yields the statement of the theorem.
\end{proof}

\begin{proof}[Proof of Lemma~\ref{lem21}] We apply induction on $n$. The cases $n=1,2$ are trivial, since ${[n]\choose \ge k}$ is the only up-set of the given size. For $n=3,k=1$ we have one more shifted up-set, namely $\{P:\{1\}\subset P\subset [3]\}$. For this family one has strict inequality in \eqref{eqsh1}.

Suppose now that \eqref{eqsh1} is true for $n$ and all $k$ and let us prove it for $n+1$ and all $k$. Note that the cases $k=1$, $k=n+1$ are obvious and suppose that $1<k\le n$. First suppose that $\ff$ is shifted.
\begin{cla}
  $|\ff(1)|\ge \sum_{i=k}^{n+1}{n\choose i-1}.$
\end{cla}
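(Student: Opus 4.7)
The plan is to decompose $\ff$ at element~$1$ and compare $\ff(1)$ with $\ff(\bar 1)$, both viewed as shifted up-sets on the $n$-element ground set $[2,n+1]$. Two structural inclusions are immediate. Since $\ff$ is an up-set, any $B\in \ff(\bar 1)$ satisfies $B\cup\{1\}\in \ff$, so $\ff(\bar 1)\subset \ff(1)$. Since $\ff$ is shifted, for every $B\in \ff(\bar 1)$ and every $j\in B$ the $(1,j)$-shift of $B$ places $(B-\{j\})\cup\{1\}$ into $\ff$, i.e.\ $B-\{j\}\in \ff(1)$; hence $\partial \ff(\bar 1)\subset \ff(1)$. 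Together these yield the inclusion $\ff(1)\supset \ff(\bar 1)\cup \partial \ff(\bar 1)$, on which everything rests.

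I would split into two cases according to the size of $\ff(\bar 1)$. First suppose $|\ff(\bar 1)|<\sum_{i=k}^{n}{n\choose i}$. Applying Pascal's identity ${n+1\choose i}={n\choose i}+{n\choose i-1}$ one checks $\sum_{i=k}^{n+1}{n+1\choose i}=\sum_{i=k}^{n}{n\choose i}+\sum_{i=k}^{n+1}{n\choose i-1}$, and consequently
$$|\ff(1)|=|\ff|-|\ff(\bar 1)|>\sum_{i=k}^{n+1}{n\choose i-1},$$
which already gives the claim.

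In the complementary case $|\ff(\bar 1)|\ge \sum_{i=k}^{n}{n\choose i}$, the family $\ff(\bar 1)$ is a nonempty up-set on $[2,n+1]$, so $[2,n+1]\in \ff(\bar 1)$. The up-set property forces every $B\in \ff(\bar 1)$ with $B\ne [2,n+1]$ to lie in $\partial \ff(\bar 1)$ (take $B'=B\cup\{j\}$ for any $j\notin B$), while $[2,n+1]\notin \partial \ff(\bar 1)$. Therefore $\ff(\bar 1)\cup \partial \ff(\bar 1)=\partial \ff(\bar 1)\cup\{[2,n+1]\}$ is a disjoint union, and $|\ff(1)|\ge |\partial \ff(\bar 1)|+1$. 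I would then invoke a monotonicity observation for the shadow: among up-sets on $[2,n+1]$ of any given cardinality, the minimum value of the shadow is non-decreasing in the cardinality, since one may pass from a larger up-set to a smaller one by successively deleting minimal elements, each deletion weakly shrinking the shadow. Combined with the induction hypothesis of Lemma~\ref{lem21} (which at the exact threshold size $\sum_{i=k}^{n}{n\choose i}$ gives minimum shadow $\sum_{i=k-1}^{n-1}{n\choose i}$), this yields $|\partial \ff(\bar 1)|\ge \sum_{i=k-1}^{n-1}{n\choose i}$, so
$$|\ff(1)|\ge \sum_{i=k-1}^{n-1}{n\choose i}+1=\sum_{j=k-1}^{n}{n\choose j}=\sum_{i=k}^{n+1}{n\choose i-1}.$$

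The main obstacle is precisely the bridge between the exact-threshold induction hypothesis of Lemma~\ref{lem21} and the a priori arbitrary cardinality of $\ff(\bar 1)$, which is resolved by the minimal-element monotonicity of the shadow. Everything else is routine bookkeeping with Pascal's identity.
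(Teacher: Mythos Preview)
Your proof is correct and follows essentially the same route as the paper's. The paper argues by contradiction (assuming $|\ff(1)|$ is too small forces $|\ff(\bar 1)|>\sum_{i=k}^n{n\choose i}$, then uses the inclusion $\partial\ff(\bar 1)\subset\ff(1)$ together with $[2,n+1]\in\ff(1)$ and the induction hypothesis to reach a contradiction), which is exactly your two cases unwound; your explicit monotonicity step for the shadow is a welcome clarification of a point the paper passes over.
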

\begin{proof} The opposite would imply $|\ff(\bar 1)|>\sum_{i=k}^{n+1}{n+1\choose i}-{n\choose i-1}=\sum_{i=k}^n{n\choose i}$.
  By the induction hypothesis, $\partial\ff(\bar 1)\ge \sum_{i=k-1}^{n-1}{n\choose i}=\sum_{j=k}^{n+1}{n\choose j-1}-1$.
By shiftedness, if $G\in \partial \ff(\bar 1)$ then $G\cup \{1\}\in \ff$, i.e., $G\in \ff(1)$. Noting that $[n+1]\in\ff$ implies that $[2,n+1]\in\ff(1)$ as well, the claim follows.
\end{proof}
Applying the induction hypothesis for $\ff(1)$ yields $$|\partial \ff(1)|\ge \sum_{i=k}^{n+1}{n\choose i-2}.$$
Together with the claim and the fact that $\partial \ff\supset \ff(1)$ we infer
$$|\partial\ff|\ge \sum_{i=k}^{n+1}{n\choose i-1}+{n\choose i-2}=\sum_{i=k}^{n+1}{n+1\choose i-1},$$
equivalent to the bound in Lemma~\ref{lem21}.

In case of equality the induction hypothesis implies $\ff(1)={[n]\choose \ge k-1}$. By shiftedness, $|\ff|\ge k$ for {\it all} $F\in\ff$. This in turn implies $\ff\subset {[n+1]\choose \ge k}$.

Finally, we note that shifting does not change the size of the sets. Thus, $S_{ij}(\ff)={[n+1]\choose \ge k}$ can only occur if already $\ff={[n+1]\choose \ge k}$. Therefore, the lemma is true for not necessarily shifted families as well.
\end{proof}

\section{Conclusion} In this paper we have obtained several results related to families of sets with no $s$ pairwise disjoint sets. The stability results in the spirit of Theorem \ref{thmhil} have proven to be useful. We have found two applications so far: to Conjecture \ref{conoy} and to Erd\H os and Kleitman's problem on families with no matchings of size $s$, studied in \cite{FK7}.

The method we developed for the proof of Theorem \ref{thm2} may be applied in different scenarios. Apart from the result on almost matchings, we have applied a modification of this method in \cite{FK8} to determine the size of the largest families with no matchings of size 3 and 4, as well as to an old problem concerning families with no partitions \cite{FK10}. Similar ideas also appeared in \cite{FK13}.
\section{Acknowledgements}
We thank the referees for carefully reading the text and pointing out numerous inaccuracies in it. Thanks to their comments, the presentation of the paper has greatly improved.


\begin{thebibliography}{100}
\bibitem{aletal} N. Alon, P. Frankl, H. Huang, V. R\" odl, A. Ruci\' nski and B. Sudakov, {\it Large matchings in uniform hypergraphs and the conjectures of Erd\H os and Samuels}, Journal of Combinatorial Theory, Series A 119 (2012), 1200--1215.

\bibitem{BDE} B. Bollob\'as, D.E. Daykin and P. Erd\H os, \textit{Sets of independent edges of a hypergraph}, Quart. J. Math. Oxford Ser. 27 (1976), N2, 25--32.

\bibitem{EKL} D. Ellis, N. Keller and N. Lifshitz, \textit{Stability versions of Erd\H os-Ko-Rado type theorems, via isoperimetry}, arXiv:1604.02160

\bibitem{E} P. Erd\H os, \textit{A problem on independent r-tuples}, Ann. Univ. Sci. Budapest. 8 (1965) 93--95.

\bibitem{EG} P. Erd\H os and T. Gallai, \textit{On maximal paths and circuits of graphs}, Acta Math. Acad. Sci. Hungar. 10 (1959), 337--356.

\bibitem{EKR} P. Erd\H os, C. Ko and R. Rado, \textit{Intersection theorems for systems of finite sets}, The Quarterly Journal of Mathematics, 12 (1961) N1, 313--320.

\bibitem{F3} P. Frankl, \textit{The shifting technique in extremal set theory}, Surveys in combinatorics, Lond. Math. Soc. Lecture Note Ser. 123 (1987), 81--110, Cambridge University
Press, Cambridge.

\bibitem{F4} P. Frankl, \textit{Improved bounds for Erd\H os' Matching Conjecture}, Journ. of Comb. Theory Ser. A 120 (2013), 1068--1072.

\bibitem{F11} P. Frankl, \textit{On the maximum number of edges in a hypergraph with given matching number}, Discrete Applied Mathematics 216 (2017), 562--581.
%\bibitem{FF81} P. Frankl and Z. F\"uredi, {\it A short proof for a theorem of Harper about Hamming-spheres,} Discrete Mathematics 34 (1981), N3, 311--313.

\bibitem{FK13} P. Frankl, A. Kupavskii,  {\it New inequalities for families without  $k$  pairwise disjoint members},  J. Comb. Th. Ser. A  157 (2018), 427--434.

\bibitem{FK7} P. Frankl and A. Kupavskii, \textit{Families with no $s$ pairwise disjoint sets}, Journal of the London Mathematical Society 95 (2017), N3, 875--894.

\bibitem{FK8} P. Frankl and A. Kupavskii, \textit{The largest families of sets with no matching of sizes 3 and 4}, 	arXiv:1701.04107

\bibitem{FK10} P. Frankl and A. Kupavskii, {\it Partition-free families of sets}, 	arXiv:1706.00215

\bibitem{FK14} P. Frankl and A. Kupavskii, {\it Proof of the Erd\H os Matching Conjecture in a new range}, preprint
\bibitem{FLM} P. Frankl, T. Luczak, K. Mieczkowska, \textit{On matchings in hypergraphs}, Electron. J. Combin. 19 (2012), Paper 42.

\bibitem{HM} A.J.W. Hilton and E.C. Milner, \textit{Some intersection theorems for systems of finite sets}, Quart. J. Math. Oxford 18 (1967), 369--384.

\bibitem{HLS} H. Huang, P. Loh and B. Sudakov, \textit{The size of a hypergraph and its matching number}, Combin. Probab. Comput. 21 (2012), 442--450.

\bibitem{K} G. Katona, \textit{Intersection theorems for systems of finite sets}, Acta Math. Acad.
Sci. Hung. 15 (1964), 329--337.

\bibitem{KLchv} N. Keller and N. Lifshitz, {\it The Junta Method for Hypergraphs and Chv\'atal's Simplex Conjecture},  arXiv:1707.02643 (2017).

\bibitem{Kl} D.J. Kleitman, \textit{Maximal number of subsets of a finite set no $k$ of which are pairwise disjoint}, Journ. of Comb. Theory 5 (1968), 157--163.

\bibitem{KM} A. Kostochka and D. Mubayi, {\it The structure of large intersecting families,} Proceedings of the American Mathematical Society 145 (2016), N6, 2311--2321.

\bibitem{Kup} A. Kupavskii, \textit{Explicit and probabilistic constructions of distance graphs with small clique numbers and large chromatic numbers}, Izvestiya: Mathematics 78 (2014), N1 59--89.

\bibitem{LM} T. Luczak and K. Mieczkowska, \textit{On Erd\H os' extremal problem on matchings in hypergraphs},  Journ. of Comb. Theory, Ser. A 124 (2014), 178--194.

\bibitem{OY} L. \" Ozkahya and M. Young, \textit{Anti-Ramsey number of matchings in hypergraphs}, Discrete Mathematics 313 (2013), N20, 2359--2364.

\bibitem{Q} F. Quinn, PhD Thesis, Massachusetts Institute of Technology (1986).

\bibitem{Zh16} Y. Zhao,  {\it Recent advances on dirac-type problems for hypergraphs}, In Recent Trends in Combinatorics, volume 159 of the
IMA Volumes in Mathematics and its Applications. Springer, New York, 2016.


\end{thebibliography}
\end{document}